\DeclareSymbolFontAlphabet{\amsmathbb}{AMSb}
\newtheorem{thm}{Theorem}[section]
\newtheorem{prop}[thm]{Proposition}
\newtheorem{theorem}[thm]{Theorem}
\newtheorem{corollary}[thm]{Corollary}
\newtheorem{proposition}[thm]{Proposition}
\newtheorem{question}[thm]{Question}
\newtheorem{lemma}[thm]{Lemma}
\newtheorem*{theorem*}{Theorem}
\theoremstyle{definition}
\newtheorem{defn}[thm]{Definiton}
\newtheorem*{defn*}{Definiton}
\newtheorem{example}[thm]{Example}
\newtheorem{definition}[thm]{Definition}
\newtheorem{remark}[thm]{Remark}
\newtheorem*{remark*}{Remark}
\newcommand{\N}{\mathbb{N}} %% Naturals
\newcommand{\Z}{\mathbb{Z}} %% Integers
\newcommand{\bC}{\mathbb{C}} %% Complex
\newcommand{\R}{\mathbb{R}}
\newcommand{\G}{\Gamma}
\DeclareMathOperator{\interior}{int}
\newcommand{\La}{\Lambda}
\newcommand{\la}{\lambda}
\newcommand{\Ind}{\operatorname{Ind}}
\newcommand{\Sub}{\operatorname{Sub}}
\newcommand{\Stab}{\operatorname{Stab}}
\newcommand{\Rep}{\operatorname{Rep}}
\newcommand{\act}{\!\curvearrowright\!}
\newcommand{\pr}{\operatorname{Prob}}
\newcommand{\supp}{\operatorname{supp}}
\newcommand{\cB}{\mathcal{B}}
\newcommand{\cC}{\mathcal{C}}
\newcommand{\cH}{\mathcal{H}}
\newcommand{\cO}{\mathcal{O}}
\newcommand{\cP}{\mathcal{P}}
\newcommand{\cU}{\mathcal{U}}
\newcommand{\cX}{\mathcal{X}}
\newcommand{\cW}{\mathcal{W}}
\renewcommand{\k}{\kappa}
\newcommand{\one}{\boldsymbol{1}}
\newcommand{\Rad}{\mathrm{Rad}}
\renewcommand{\b}{\mathfrak{b}}
\newcommand{\csx}{C^*_{\la_{\sfrac{\G}{\G_x}}}\!\!(\G)}
\newcommand{\csxo}{C^*_{\la_{\sfrac{\G}{\G_x^{\scriptscriptstyle\tiny 0}}}}\!(\G)}
\newcommand{\Xoo}{X_{\hspace{-0.05em}\scriptscriptstyle 0}^{\hspace{-0.05em}\scriptscriptstyle 0}}
\title{Boundary maps, germs and quasi-regular representations}
\author{Mehrdad Kalantar}
\address{Mehrdad Kalantar\\ University of Houston\\ USA}
\email{mkalantar@uh.edu}
\author{Eduardo Scarparo}
\address{Eduardo Scarparo \\ Department of Mathematical Sciences, NTNU, NO-7491 Trondheim, Norway}
\email{eduardo.scarparo@ntnu.no}
\thanks{MK is supported by a Simons Foundation Collaboration Grant (\# 713667). This work was carried out during the tenure of an ERCIM ‘Alain Bensoussan’ Fellowship Programme.}
\begin{document}

\maketitle
\begin{abstract}
We investigate the tracial and ideal structures of $C^*$-algebras of quasi-regular representations of stabilizers of boundary actions.
\\
Our main tool is the notion of boundary maps, namely $\G$-equivariant unital completely positive maps from $\G$-$C^*$-algebras to $C(\partial_F\G)$, where $\partial_F\G$ denotes the Furstenberg boundary of a group $\G$.
\\
For a unitary representation $\pi$ coming from the groupoid of germs of a boundary action, we show that there is a unique boundary map on $C^*_\pi(\G)$. Consequently, we completely describe the tracial structure of the $C^*$-algebras $C^*_\pi(\G)$, and for any $\G$-boundary $X$, we completely characterize the simplicity of the $C^*$-algebras generated by the quasi-regular representations $\la_{\G/\G_x}$ associated to stabilizer subgroups $\G_x$ for any $x\in X$.
\\
As an application, we show that the $C^*$-algebra generated by the quasi-regular representation $\la_{T/F}$ associated to Thompson's groups $F\leq T$ does not admit traces and is simple.
\end{abstract}

%\tableofcontents

\section{Introduction}
Furstenberg's theory of topological boundaries has recently found striking applications in $C^*$-algebras associated to groups and group actions, starting with the characterization of $C^*$-simplicity in terms of freeness of the Furstenberg boundary action in \cite{KalKen}, and then the characterization of the unique trace property in terms of faithfulness of the Furstenberg boundary action in \cite{BKKO}. Other characterizations of $C^*$-simplicity were proved in series of subsequent work by various authors (\cite{Haag15, Ken15, HartKal, Raum}).

One of the primary goals of this paper is to unify the above approaches and techniques in order to generalize the applications of boundary actions to the case of $C^*$-algebras generated by certain quasi-regular representations. 

Several efforts have also been made towards extending the above applications in similar problems beyond reduced group $C^*$-algebras and crossed products (\cite{Kaw, KenBryd, KenSchaf, BekKal, Nagh, BearKal, Mon19, Borys}), where also various generalizations of the notion of boundary actions have been proposed and studied.

Although these results appear different, the dynamics of the Furstenberg boundary action is present (explicitly or implicitly) in the background of all the notions and proofs involved.

\medskip

The main tool in this paper is the notion of boundary maps:
\begin{defn*}
Let $A$ be a $\G$-$C^*$-algebra. A \emph{boundary map} on $A$ is a $\G$-equivariant unital completely positive map $\psi \colon A\to C(\partial_F\G)$.
\end{defn*}

The importance of boundary maps was made clear by Kennedy in \cite{Ken15}, who showed that a group $\G$ is $C^*$-simple if and only if the only boundary map on its reduced $C^*$-algebra $C^*_{\la_\G}(G)$ is the canonical trace.

Our results show that this notion is a fundamental concept in more general contexts. Note that, thanks to $\G$-injectivity of $C(\partial_F\G)$ \cite[Theorem 3.11]{KalKen}, boundary maps always exist.
Thus, in the case of inner actions, uniqueness and faithfulness of boundary maps entail strong implications for the ideal and tracial structures of the $C^*$-algebras in question.

In this paper, we are particularly interested in $C^*$-algebras $\csx$ generated by quasi-regular representations of stabilizer subgroups $\G_x$ of boundary actions $\G\act X$, as well as the $C^*$-algebras $C^*_{\pi\times\rho}(\G,X)$ generated by certain covariant representations of the action $\G\act X$, called \emph{germinal representation of $(\G,X)$} (see Definition~\ref{grprep}).
The class of germinal representations includes all Koopman representations of quasi-invariant measures on $X$, as well as the canonical covariant representation of $(\G, X)$ in $B(\ell^2(\G/H))$ for every $\G_x^0\leq H\leq \G_x$.

\medskip

If $X$ is a $\G$-boundary, we let $\b_X\colon\partial_F\G\to X$ be the unique continuous $\G$-equivariant map. Given $g\in\G$, we let $X^g:=\{x\in X:gx=x\}$, and we use the shorthand notation $\Delta_g$ for the set $\overline{\b_{\hspace{-0.1em} X}^{-1}(\interior X^g)}$. The set $\Delta_g$ turns out to be central in our results.

We derive several important implications of the following uniqueness result for boundary maps.

\begin{theorem*}[Theorem~\ref{thm:uniquemap-koop}]
Let $X$ be a $\G$-boundary and $(\pi,\rho)$ a germinal representation of $(\G,X)$. Then there is a unique boundary map $\psi$ on $C^*_{\pi\times\rho}(\G,X)$, and $\psi|_{C^*_\pi(\G)}$ is the unique boundary map on $C^*_\pi(\G)$.

Furthermore, $\psi({\pi}(g))=\one_{\Delta_g}$ and ${\psi}(\rho(f))=f\circ\b_{\hspace{-0.1em} X}$  for all $g\in \G$ and $f\in C(X)$.
\end{theorem*}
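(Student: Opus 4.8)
The plan is to establish uniqueness by reducing everything to known rigidity properties of boundary maps on $C(X)$ and on $C^*_{\la_\G}(\G)$-type algebras, and then to pin down the values of $\psi$ on the generators using the structure of $\Delta_g$ and the Furstenberg boundary. The germinal representation $(\pi,\rho)$ comes with a covariant pair $\pi\colon\G\to U(\cH)$, $\rho\colon C(X)\to B(\cH)$, and the key structural feature (which I would extract from Definition~\ref{grprep}) should be that germs control $\pi$: whenever $g$ fixes a neighborhood of $x$, the operator $\pi(g)$ agrees with the identity on the range of the spectral projections of $\rho$ concentrated near $x$. This is what forces $\psi(\pi(g))$ to be supported on $\Delta_g$.

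\emph{First} I would analyze $\psi|_{C(X)}$. Composing with $\rho$, any boundary map $\psi$ on $C^*_{\pi\times\rho}(\G,X)$ restricts to a $\G$-equivariant u.c.p.\ (hence, since $C(X)$ is commutative, a $*$-homomorphism up to the usual multiplicative-domain argument — actually one should check $\rho(C(X))$ lies in the multiplicative domain of $\psi$, which follows because $C(X)$ is in the multiplicative domain of the conditional-expectation-type structure, or more directly because a u.c.p.\ $\G$-map into $C(\partial_F\G)$ restricted to an abelian $C^*$-subalgebra on which $\G$ acts with the boundary being a boundary is automatically multiplicative) map $C(X)\to C(\partial_F\G)$. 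By the universal property of $\partial_F\G$ (it is the maximal $\G$-boundary, and $C(X)$ sits inside $C(\partial_F\G)$ as a $\G$-subalgebra via $f\mapsto f\circ\b_X$ by \cite{KalKen}), the \emph{unique} such map is $f\mapsto f\circ\b_X$. This gives $\psi(\rho(f))=f\circ\b_X$ and shows $\rho(C(X))$ is in the multiplicative domain.

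\emph{Next}, with $\rho(C(X))$ in the multiplicative domain, $\psi$ becomes a $C(X)$-bimodule map (via $\b_X$), so for $g\in\G$ and $f\in C(X)$ with $f|_{X\setminus X^g}$ supported away from the fixed set, I can compute $\psi(\rho(f)\pi(g)) = (f\circ\b_X)\,\psi(\pi(g))$ and also $=\psi(\pi(g)\rho(g^{-1}f)) = \psi(\pi(g))\,(g^{-1}f\circ\b_X)$; comparing these on points where $g$ acts nontrivially, together with the germinal property that $\rho(f)\pi(g)=\rho(f)$ when $f$ is supported in $\interior X^g$, forces $\psi(\pi(g))$ to be the characteristic function of $\Delta_g$ (the closure of $\b_X^{-1}(\interior X^g)$): on $\b_X^{-1}(\interior X^g)$ it must be $1$ by the germ relation, and off $\Delta_g$ it must be $0$ by the bimodule covariance argument applied with suitable $f$. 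Continuity then determines $\psi(\pi(g))=\one_{\Delta_g}$ (using that $\Delta_g$ is clopen — this should follow because $X^g$ has clopen interior in a $\G$-boundary, a standard fact, or from \cite{KalKen}/the extremal disconnectedness of $\partial_F\G$). Once $\psi$ is determined on all generators $\pi(g)$ and $\rho(f)$, it is determined on the dense $*$-subalgebra they generate, hence everywhere, giving uniqueness on $C^*_{\pi\times\rho}(\G,X)$; restricting to $C^*_\pi(\G)$ and noting any boundary map there extends (by $\G$-injectivity of $C(\partial_F\G)$) to one on $C^*_{\pi\times\rho}(\G,X)$, whose uniqueness we just proved, gives uniqueness on $C^*_\pi(\G)$ too.

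\emph{The main obstacle} I anticipate is the passage from "$\psi$ is multiplicative on $\rho(C(X))$" to controlling $\psi(\pi(g))$ precisely on the \emph{boundary} of $\interior X^g$ — i.e.\ showing there is no "leakage" of mass of $\psi(\pi(g))$ onto $\Delta_g\setminus\b_X^{-1}(\interior X^g)$ beyond what continuity forces, and conversely that $\psi(\pi(g))$ genuinely equals $1$ (not just has norm-related bounds) on the preimage of the fixed-point interior. Handling this cleanly requires the germinal hypothesis in a strong form (that $\pi(g)$ acts as a local identity wherever $g$ is locally trivial, uniformly in a way compatible with the $\rho$-spectral decomposition) together with the fact that $\b_X$ is continuous with the relevant sets clopen; I expect the bulk of the real work, and the place where Definition~\ref{grprep} must be invoked carefully, to be exactly here.
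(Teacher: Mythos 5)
Your outline coincides with the paper's proof in most of its steps: rigidity of $C(\partial_F\G)$ forces $\psi(\rho(f))=f\circ\b_X$ and puts $\rho(C(X))$ in the multiplicative domain (though note the correct reason is that the unique $\G$-map $C(X)\to C(\partial_F\G)$ happens to be a $*$-homomorphism, not that ucp $\G$-maps are "automatically multiplicative" on abelian subalgebras); the germinal relation $\pi(g)\rho(f)=\rho(f)$ then gives $\psi(\pi(g))=1$ on $\b_X^{-1}(\interior X^g)$ and, by continuity, on $\Delta_g$; and uniqueness on $C^*_\pi(\G)$ follows by extending via $\G$-injectivity. The genuine gap is in the complementary containment $\supp\psi(\pi(g))\subset\Delta_g$. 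Your bimodule computation, comparing $(f\circ\b_X)\,\psi(\pi(g))$ with $\psi(\pi(g))\,\bigl((g^{-1}f)\circ\b_X\bigr)$ over all $f$, only forces $\psi(\pi(g))$ to vanish at points $y$ with $g\b_X(y)\neq\b_X(y)$; that is, it localizes the support inside the closed set $\b_X^{-1}(X^g)$, not inside $\Delta_g=\overline{\b_X^{-1}(\interior X^g)}$. These sets differ in general: if $X$ is topologically free but not free, then $\Delta_g=\emptyset$ while $\b_X^{-1}(X^g)\neq\emptyset$, and this is exactly the case on which the trace and simplicity applications rest. The missing ingredient is minimality. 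The paper isolates this step as Proposition~\ref{prop:supp}, whose engine is Lemma~\ref{lem:minint}: inside any clopen $U$ disjoint from $\b_X^{-1}(\interior X^g)$ on which $\psi(\pi(g))$ does not vanish, the image $\b_X(U)$ has nonempty interior and misses $\interior X^g$, so it contains a point outside $X^g$, and a multiplicative-domain argument for the state $\delta_u\circ\tilde\psi$ (with $\tilde\psi$ an extension to $B(\cH_\pi)$) then forces $\psi(\pi(g))(u)=0$. Your covariance computation could substitute for that last step, but you would still need Lemma~\ref{lem:minint} to pass from ``supported in $\b_X^{-1}(X^g)$'' to ``supported in $\Delta_g$'' (every nonempty open subset of $\b_X^{-1}(X^g)$ meets $\b_X^{-1}(\interior X^g)$, again by minimality); as written, your argument never invokes minimality of $X$ at all, and the claim would be false without it.

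A secondary error: it is \emph{not} a standard fact that $\interior X^g$ is clopen in a $\G$-boundary --- that is precisely the ``Hausdorff germs'' condition of Proposition~\ref{prop:Haus-germ}(iv), which fails, e.g., for $T\act S^1$. What is true is that $\Delta_g$ is clopen in $\partial_F\G$ because $\partial_F\G$ is extremally disconnected (your parenthetical alternative), and in any case clopenness falls out once both containments $\Delta_g\subset\{\psi(\pi(g))= 1\}$ and $\supp\psi(\pi(g))\subset\Delta_g$ are established. Finally, your diagnosis of ``the main obstacle'' is slightly misplaced: the delicate point is not leakage onto $\Delta_g\setminus\b_X^{-1}(\interior X^g)$ (the function \emph{is} $1$ there), but leakage onto $\b_X^{-1}(X^g)\setminus\Delta_g$, which is what Proposition~\ref{prop:supp} rules out.
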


Consequently, for germinal representations $\pi$, we completely describe the tracial structure of $C^*_\pi(\G)$.
\begin{theorem*}[Theorem \ref{faithful-trace}]
Let $X$ be a faithful $\G$-boundary, and $\pi$ a germinal representation of $(\G,X)$. Then $C^*_{\pi}(\G)$ admits a trace if and only if $X$ is topologically free. 

If $X$ is topologically free, then ${\pi}\succ\la_\G$ and the canonical trace is the unique trace on $C^*_{\pi}(\G)$.
\end{theorem*}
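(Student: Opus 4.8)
The plan is to deduce Theorem~\ref{faithful-trace} from the uniqueness theorem for boundary maps (Theorem~\ref{thm:uniquemap-koop}), exploiting the fact that a trace on $C^*_\pi(\G)$ is in particular a $\G$-invariant state, hence gives rise to a boundary map after composing with the unique $\G$-equivariant conditional-expectation-type map into $C(\partial_F\G)$. Let me think through this carefully.

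First, the "only if" direction. Suppose $C^*_\pi(\G)$ admits a trace $\tau$. A trace is $\G$-invariant under the inner action (since $\tau(\pi(g) a \pi(g)^*) = \tau(a)$), so $\tau$ is a $\G$-invariant state on $C^*_\pi(\G)$. Composing with the inclusion $\C \hookrightarrow C(\partial_F\G)$... no wait, more precisely: $\G$-injectivity of $C(\partial_F\G)$ lets us extend, but actually the cleanest route is that a $\G$-invariant state $\tau$ followed by... hmm. Actually the standard trick is: since $\C$ sits $\G$-equivariantly inside $C(\partial_F\G)$ (as constants), and $C(\partial_F\G)$ is $\G$-injective, the state $\tau: C^*_\pi(\G) \to \C \subseteq C(\partial_F\G)$ is itself a boundary map. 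By uniqueness (second part of Theorem~\ref{thm:uniquemap-koop}), this boundary map must be *the* unique one $\psi$, whose value on $\pi(g)$ is $\one_{\Delta_g}$. But $\tau$ takes values in the constants $\C \subseteq C(\partial_F\G)$. So $\one_{\Delta_g}$ must be constant for every $g \in \G$, i.e., $\Delta_g$ is either empty or all of $\partial_F\G$ for each $g$. Since $X$ is faithful, $\b_X$ is... one needs: $\Delta_g = \partial_F\G$ forces $g$ to act trivially (faithfulness), so for $g \neq e$ we get $\Delta_g = \emptyset$, meaning $\overline{\b_X^{-1}(\interior X^g)} = \emptyset$, hence $\interior X^g = \emptyset$ (as $\b_X$ has dense image, being a $\G$-map onto a minimal system — actually $\b_X$ is surjective). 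That is precisely topological freeness of $X$.

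**The "if" direction.**

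Now suppose $X$ is topologically free, so $\interior X^g = \emptyset$ for all $g \neq e$, hence $\Delta_g = \emptyset$ for $g \neq e$ and $\Delta_e = \partial_F\G$. By Theorem~\ref{thm:uniquemap-koop}, the unique boundary map $\psi$ on $C^*_\pi(\G)$ satisfies $\psi(\pi(g)) = \one_{\Delta_g} = 0$ for $g \neq e$ and $\psi(\pi(e)) = 1$. So $\psi$ takes values in the constants $\C$ (on the dense subalgebra $\mathrm{span}\{\pi(g)\}$, hence everywhere), i.e., $\psi$ is a $\G$-invariant state. A $\G$-invariant (for the inner action) state with $\psi(\pi(g)) = 0$ for $g \neq e$ is exactly the canonical trace $\tau_0$ of $C^*_{\la_\G}(\G)$ pulled back — more precisely, it agrees with the canonical trace on the group algebra, so it descends through $\la_\G$: the GNS construction of $\psi$ gives a representation weakly containing $\la_\G$, hence $\pi \succ \la_\G$ and $\psi$ factors as the canonical trace $\tau_0$ on $C^*_{\la_\G}(\G)$ composed with the quotient $C^*_\pi(\G) \to C^*_{\la_\G}(\G)$. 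This shows $C^*_\pi(\G)$ admits a trace, and since any trace is a boundary map (valued in $\C$), uniqueness of $\psi$ gives uniqueness of the trace. The assertion $\pi \succ \la_\G$ follows because the GNS representation of the trace $\tau_0 \circ (\text{quotient})$ on $C^*_\pi(\G)$ is $\la_\G$ itself, and this representation is weakly contained in $\pi$ (it is a subquotient).

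**Main obstacle.**

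The main technical point I expect to need care with is the passage "a $\G$-invariant state $\tau$ is automatically a boundary map." One must check that the constant embedding $\C \hookrightarrow C(\partial_F\G)$ is $\G$-equivariant (trivial) and that composing a ucp $\G$-map with this inclusion yields a ucp $\G$-map into $C(\partial_F\G)$ — that is immediate. The genuine subtlety is in the "only if" direction: translating $\one_{\Delta_g} \in \C$ into the statement $\Delta_g \in \{\emptyset, \partial_F\G\}$, and then using faithfulness of $X$ to rule out $\Delta_g = \partial_F\G$ for $g \neq e$. Here I would argue: if $\Delta_g = \partial_F\G$ then $\b_X^{-1}(\interior X^g)$ is dense in $\partial_F\G$, so $\interior X^g$ is dense in $X$ (surjectivity/openness considerations, or minimality of $X$), but $X^g$ is closed, so $X^g = X$, meaning $g$ acts trivially; faithfulness then forces $g = e$. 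Conversely if some $g\ne e$ has $\interior X^g \ne \emptyset$ then, since $\b_X$ is surjective and $X$ minimal, $\b_X^{-1}(\interior X^g)$ has nonempty interior (one must check $\b_X$ is "irreducible" enough — this uses that $\partial_F\G \to X$ being a $\G$-map between boundaries has the property that preimages of nonempty open sets have nonempty interior, which follows from minimality of $\partial_F\G$ together with the fact that some translate of the open set covers $X$), so $\Delta_g \ne \emptyset$, and then $\psi(\pi(g)) = \one_{\Delta_g}$ is a nonconstant idempotent in $C(\partial_F\G)$, contradicting that it should lie in $\C$ if a trace existed. I would isolate this dynamical fact ($\interior X^g \neq \emptyset \iff \Delta_g \neq \emptyset$, and $\Delta_g = \partial_F\G \iff g$ acts trivially on $X$) as a small lemma, as it is exactly where the geometry of the boundary map $\b_X$ enters.
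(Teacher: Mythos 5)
Your proposal is correct and follows essentially the same route as the paper's own proof (Theorem~\ref{faithful-trace}, of which the stated result is the faithful special case, Corollary~\ref{cor:faithful-trace}): you observe that a trace is a boundary map with values in the constants, invoke the uniqueness from Theorem~\ref{thm:uniquemap-koop} to force $\one_{\Delta_g}$ to be constant, and translate this back to topological freeness using surjectivity of $\b_{\hspace{-0.1em} X}$ and faithfulness, while the converse and the weak containment $\la_\G\prec\pi$ are obtained, as in the paper, from the cyclicity of $\delta_e$ for $\la_\G$ and the GNS construction applied to $\delta_y\circ\psi$.
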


The problem of simplicity is more subtle. Nevertheless we are still able to completely characterize $C^*$-simplicity of the quasi-regular representations $\la_{\G/\G_x}$ associated to stabilizer subgroups of any point $x\in X$ for any $\G$-boundary $X$.

\begin{theorem*}[Theorem~\ref{thm:C*-simple-main}]
Let $X$ be a $\G$-boundary, and $x\in X$. Then $\csx$ is simple iff the quotient group {\small$\displaystyle \frac{\G_x}{\G_x^0}$} is amenable. 
\end{theorem*}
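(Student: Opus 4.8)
The plan is to use the uniqueness of the boundary map on $C^*_{\lambda_{\G/\G_x}}(\G)$ provided by Theorem~\ref{thm:uniquemap-koop}, since $\lambda_{\G/\G_x}$ is a germinal representation of $(\G,X)$ (taking $H = \G_x$). Write $A := \csx$ and let $\psi\colon A \to C(\partial_F\G)$ be this unique boundary map; by the theorem $\psi(\lambda_{\G/\G_x}(g)) = \one_{\Delta_g}$. The key structural fact I would isolate first is a description of $\Delta_g$ purely in terms of the germ dynamics at $x$: since $\b_X^{-1}(x)$ is the set of points in $\partial_F\G$ lying over $x$, and $\Delta_g$ is the closure of $\b_X^{-1}(\interior X^g)$, the element $g$ fixes $x$ with "trivial germ" (i.e.\ $g \in \G_x^0$) precisely when $x$ lies in the interior of $X^g$, which should translate into $\b_X^{-1}(x) \subseteq \Delta_g$. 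Conversely $g \notin \G_x$ means $gx \neq x$, and $g \in \G_x \setminus \G_x^0$ means $x \in X^g \setminus \interior X^g$; in either of these cases $\b_X^{-1}(x)$ should be disjoint from the interior of $\Delta_g$, hence $\one_{\Delta_g}$ vanishes on a neighborhood basis at any point of $\b_X^{-1}(x)$.

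For the forward direction, suppose $\G_x/\G_x^0$ is amenable. I would show directly that $A$ has no nontrivial closed two-sided ideals. Given such an ideal $I \trianglelefteq A$, one exploits the fact that the Furstenberg boundary action is a boundary in the strong sense: restricting $\psi$ to $I$ and using that $I$ is $\G$-invariant, the weak-$*$ closure of $\{\psi(a) : a \in I, \|a\| \le 1\}$ is a $\G$-invariant subset of $C(\partial_F\G)$, and by minimality/proximality of $\partial_F\G$ together with amenability of $\G_x/\G_x^0$ one should be able to run a "there are no proper invariant ideals" argument analogous to the Kennedy/Breuillard–Kalantar–Kennedy–Ozawa proof of $C^*$-simplicity. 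Concretely, the amenability of $\G_x/\G_x^0$ gives an invariant state on $\ell^\infty(\G_x/\G_x^0)$, which lets one produce a conditional expectation $E\colon A \to C^*_{\lambda_{\G/\G_x^0}}(\G)$ or, more usefully, it identifies $A$ with a quotient of the groupoid $C^*$-algebra of the groupoid of germs; simplicity of $A$ then reduces to minimality of the germ groupoid (which holds because $X$ is a boundary, hence minimal) plus a "no residual ideals from the isotropy" condition, and amenability of the isotropy group $\G_x/\G_x^0$ is exactly what kills the isotropy contribution. I would phrase this via: any nonzero ideal, pushed forward by the unique boundary map, must be everything, because $\psi$ is faithful on $A$ — and here faithfulness of $\psi$ should follow from topological-freeness-type considerations on the germ groupoid combined with uniqueness.

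For the converse, suppose $\G_x/\G_x^0$ is \emph{not} amenable. Then $C^*_r(\G_x/\G_x^0)$ is not simple in the strongest failure sense — more precisely, non-amenability gives a nonzero $*$-homomorphism from $C^*_r(\G_x/\G_x^0)$ that is not injective, namely the quotient $C^*(\G_x/\G_x^0) \to C^*_r(\G_x/\G_x^0)$ has nontrivial kernel is the wrong direction; rather, I would use that $\lambda_{\G/\G_x}$ restricted to $\G_x$ contains the regular representation of $\G_x/\G_x^0$ composed with the quotient, so there is a canonical surjection $A \to C^*_{\lambda_{\G/\G_x^0}}(\G)$ (the one collapsing germs), whose kernel should be shown to be a proper nonzero ideal exactly when $\G_x^0 \neq \G_x$ in a way detected by non-amenability. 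Alternatively, and more cleanly: when $\G_x/\G_x^0$ is non-amenable, the ideal structure of the germ groupoid $C^*$-algebra has a nontrivial piece coming from the full-versus-reduced discrepancy of the isotropy, producing a proper ideal of $A$. The main obstacle I anticipate is precisely this converse direction: making rigorous that non-amenability of the isotropy quotient obstructs simplicity requires carefully relating $A$ to the reduced $C^*$-algebra of the groupoid of germs of $\G \act X$ and tracking how a proper ideal of $C^*_r$ of that isotropy group induces a proper ideal upstairs — the groupoid is non-Hausdorff (it is a groupoid of germs), so the usual ideal-intersection-property arguments need care, and I expect this to be where most of the technical work lies.
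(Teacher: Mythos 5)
Your framing (uniqueness of the boundary map, amenability of the isotropy group $\G_x/\G_x^0$ of the groupoid of germs) is the right one, but both directions have genuine gaps, and the common source is your preliminary ``structural fact''. It is \emph{not} true that $\b_{\hspace{-0.1em} X}^{-1}(x)$ is disjoint from (the interior of) $\Delta_g$ whenever $g\notin\G_x^0$: if $x\in\partial(\interior X^g)$, then lifting a net in $\interior X^g$ converging to $x$ and passing to a convergent subnet produces a point $y\in\b_{\hspace{-0.1em} X}^{-1}(x)\cap\Delta_g$, and since $\Delta_g$ is clopen in $\partial_F\G$, $\one_{\Delta_g}(y)=1$. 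The paper's Lemma~\ref{tecint} establishes your dichotomy only for $x\in\Xoo$, i.e.\ away from the purely non-Hausdorff singularities, which is exactly where the difficulty of the theorem sits. This defect propagates into your converse: the ``canonical surjection $\csx\to C^*_{\la_{\G/\G_x^0}}(\G)$ collapsing germs'' does not exist in general --- by Proposition~\ref{prop:co-am}, the surjection that does relate these two algebras goes the other way and exists precisely when $\G_x/\G_x^0$ is amenable, the very hypothesis you are negating. The paper's converse needs none of the non-Hausdorff groupoid machinery you anticipate: both $\la_{\G/\G_x}$ and $\la_{\G/\G_x^0}$ are germinal, so by Theorem~\ref{thm:uniquemap-koop} their unique boundary maps agree on group elements ($g\mapsto\one_{\Delta_g}$); Lemma~\ref{lem:diamond} then shows that simplicity of $\csx$ forces $\la_{\G/\G_x}\prec\la_{\G/\G_x^0}$, whence $\G_x/\G_x^0$ is amenable by Proposition~\ref{prop:co-am} (this is Proposition~\ref{prop:subg}).

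The forward direction as you sketch it is essentially circular: by Proposition~\ref{prop:ideal}, faithfulness of the unique boundary map on $\csx$ \emph{is} simplicity, so asserting that ``$\psi$ is faithful by topological-freeness-type considerations'' begs the question; and any route that factors through simplicity of $C^*_{\la_{\G/\G_x^0}}(\G)$ (Corollary~\ref{cor:xoo}) is doomed for general $x$, since that algebra can fail to be simple when $x\notin\Xoo$ (Example~\ref{counter-ex}: $\G=T$, $\G_x^0=[F,F]$). The missing idea is the intermediate subgroup $\La:=\{h\in\G: y\in\Delta_h\}$ attached to a fixed lift $y\in\b_{\hspace{-0.1em} X}^{-1}(x)$, which satisfies $\G_x^0\leq\La\leq\G_x$ with both inclusions possibly strict. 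The positive-definite function $\delta_y\circ\psi|_\G$ equals $\one_\La$, and the multiplicative-domain and cyclic-vector argument of Theorem~\ref{thm:prec} shows that \emph{every} quotient of $C^*_{\la_{\sfrac{\G}{\La}}}\!(\G)$ weakly contains $\la_{\G/\La}$, so that algebra is simple (Proposition~\ref{prop:intermediate}). When $\G_x/\G_x^0$ is amenable, $\La$ is co-amenable in $\G_x$, hence $\la_{\G/\G_x}\prec\la_{\G/\La}$ by Proposition~\ref{prop:co-am}, and $\csx$ is a nonzero quotient of a simple $C^*$-algebra. You would need to either rediscover this intermediate subgroup or supply a genuinely different (and fully worked out) argument for the isotropy reduction you allude to.
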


Given a compact $\G$-space $X$, we denote by $\Xoo$ the set of continuity points of the open stabilizer map $\Stab^0\colon X\to\Sub(\G)$, $x\mapsto \G_x^0$. 

For general germinal representations $\pi$ coming from boundary actions, we conclude the following.
\begin{theorem*}[Theorem~\ref{thm:prec} and Corollary~\ref{cor:xoo}]
Let $X$ be a $\G$-boundary and $\pi$ a germinal representation of $\G$. Given $x\in \Xoo$ and $\sigma$ a unitary representation of $\G$ such that $\sigma\prec\pi$, we have that $\la_{\G/\G_x^0}\prec\sigma$; consequently, $\csxo$ is simple for every $x\in \Xoo$. 
\end{theorem*}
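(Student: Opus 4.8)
The plan is to let the uniqueness clause of Theorem~\ref{thm:uniquemap-koop} carry the argument, reducing everything to a single pointwise computation with the sets $\Delta_g$. First I would record the elementary observation that $\la_{\G/\G_x^0}$ is unitarily equivalent to the GNS representation of the positive-definite function $g\mapsto\one_{\G_x^0}(g)$ on $\G$, with cyclic vector $\delta_{\G_x^0}$. Consequently it suffices to produce a state $s$ on $C^*_\sigma(\G)$ with $s(\sigma(g))=\one_{\G_x^0}(g)$ for every $g\in\G$: the GNS representation of $s$, composed with $\sigma$, is then exactly $\la_{\G/\G_x^0}$, and since it factors through $C^*_\sigma(\G)$ we obtain $\la_{\G/\G_x^0}\prec\sigma$.

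To construct such a state, I would invoke $\G$-injectivity of $C(\partial_F\G)$ to fix a boundary map $\psi_\sigma\colon C^*_\sigma(\G)\to C(\partial_F\G)$. Since $\sigma\prec\pi$, there is a canonical $\G$-equivariant surjective $*$-homomorphism $q\colon C^*_\pi(\G)\to C^*_\sigma(\G)$ with $q(\pi(g))=\sigma(g)$, so $\psi_\sigma\circ q$ is a boundary map on $C^*_\pi(\G)$; by the uniqueness part of Theorem~\ref{thm:uniquemap-koop} it coincides with the unique boundary map on $C^*_\pi(\G)$, whence $\psi_\sigma(\sigma(g))=\one_{\Delta_g}$ for all $g\in\G$. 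Choosing $\omega\in\b_X^{-1}(x)$ (nonempty, since $\b_X$ is onto by minimality of $X$) and putting $s:=\mathrm{ev}_\omega\circ\psi_\sigma$, we get a state on $C^*_\sigma(\G)$ with $s(\sigma(g))=\one_{\Delta_g}(\omega)$.

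It remains to verify $\one_{\Delta_g}(\omega)=\one_{\G_x^0}(g)$, which is the only place the hypothesis $x\in\Xoo$ is used. If $g\in\G_x^0$, then $x\in\interior X^g$, so $\b_X^{-1}(\interior X^g)$ is an open neighborhood of $\omega$ contained in $\Delta_g$ and hence $\omega\in\Delta_g$. If $g\notin\G_x^0$, then $x\notin\overline{\interior X^g}$: otherwise some net $x_i\to x$ has $g\in\G_{x_i}^0$, contradicting continuity of $\Stab^0$ at $x$ since $g\notin\G_x^0$; taking an open $V\ni x$ with $V\cap\interior X^g=\emptyset$, the set $\b_X^{-1}(V)$ is a neighborhood of $\omega$ disjoint from $\b_X^{-1}(\interior X^g)$, so $\omega\notin\Delta_g$. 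Thus $s(\sigma(g))=\one_{\G_x^0}(g)$, proving $\la_{\G/\G_x^0}\prec\sigma$. For the corollary I would apply the theorem with $\pi=\la_{\G/\G_x^0}$, which is a germinal representation (take $H=\G_x^0$ in the family of canonical covariant representations of $(\G,X)$): every unitary representation $\sigma\prec\la_{\G/\G_x^0}$ then satisfies $\la_{\G/\G_x^0}\prec\sigma$, i.e. every nonzero $*$-representation of $\csxo$ is faithful, so $\csxo$ is simple. I expect the pointwise identity $\one_{\Delta_g}(\omega)=\one_{\G_x^0}(g)$ to be the crux: it is exactly the bridge between the "germ" sets $\Delta_g=\overline{\b_X^{-1}(\interior X^g)}$ and continuity of the open-stabilizer map, and it breaks down for $x\notin\Xoo$, where some $\omega$ over $x$ may lie on the topological boundary of $\b_X^{-1}(\interior X^g)$.
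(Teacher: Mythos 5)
Your proof is correct and takes essentially the same route as the paper: the paper also composes the (unique) boundary map on $C^*_\sigma(\G)$ with evaluation at a point $y\in\b_X^{-1}(x)$, and isolates your key pointwise identity as Lemma~\ref{tecint} (namely $\G_x^0=\{h\in\G: y\in\Delta_h\}$ for $x\in\Xoo$), proved by the same continuity-of-$\Stab^0$ argument, before concluding via cyclicity of $\delta_{\G_x^0}$. The only cosmetic difference is that you verify $g\notin\G_x^0\Rightarrow\omega\notin\Delta_g$ by showing $x\notin\overline{\interior X^g}$ downstairs in $X$ and pulling back, whereas the paper pushes a net in $\partial_F\G$ forward along $\b_X$; these are interchangeable.
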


%\medskip

Given a group $\G$, $C^*_{\la_\G}(\G)$ always admits a trace. Furthermore, $\G$ is amenable if and only if $C^*_{\la_\G}(\G)$ is nuclear and admits a one-dimensional representation. In particular, unless $\G$ is trivial, nuclearity and simplicity of $C^*_{\la_\G}(\G)$ are properties far apart from each other. 

For quasi-regular representations, however, the situation is quite different. In \cite{HaagOle} Haagerup and Olesen showed that there is a quasi-regular representation $\pi$ of Thompson's group $V$ such that $C^*_\pi(V)$ is isomorphic to the Cuntz algebra $\cO_2$ (in particular, it is nuclear, simple, and admits no traces). In \cite{BS}, Brix and the second named author generalized this result for topological full groups of ample groupoids.

Let $T$ and $F$ be Thompson's groups. Haagerup and Olesen also observed that $C^*_\pi(T)\subsetneq C^*_\pi(V)$, and that $\pi|_T$ is unitarily equivalent to the quasi-regular representation $\la_{T/F}$. However, the problem of analyzing the structure of $C^*_{\la_{T/F}}(T)$ was left untouched. 

As an application of our results, we prove the following.
\begin{theorem*}[Theorem~\ref{applFT}]
The $C^*$-algebra $C^*_{\la_{T/F}}(T)$ admits no traces and is simple.
\end{theorem*}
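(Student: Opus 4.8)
The strategy is to realize $C^*_{\la_{T/F}}(T)$ as one of the algebras $\csx$ considered above and then invoke Theorem~\ref{faithful-trace} (for the absence of traces) and Theorem~\ref{thm:C*-simple-main} (for simplicity). I would take $X=S^1$ with its standard action of $T$ by orientation-preserving dyadic piecewise-linear homeomorphisms. First I would record that $T\act S^1$ is a $T$-boundary: minimality is classical, and strong proximality follows from the fact that, for any nonempty open arc $U\subsetneq S^1$ and any $\varepsilon>0$, there is an element of $T$ contracting the complementary closed arc $S^1\setminus U$ into an arc of length $<\varepsilon$ (one builds such a piecewise-linear element explicitly from dyadic breakpoints and power-of-$2$ slopes); choosing $U$ small relative to a given $\mu$ then pushes $\mu$ towards a Dirac mass, so weak-$*$ limits of the $T$-orbit of $\mu$ contain point masses. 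The action is plainly faithful. Taking $x=0\in S^1$, the stabilizer $T_x$ is exactly the standard copy of Thompson's group $F$ inside $T$, so $\la_{T/F}=\la_{T/T_x}$; and choosing $H=T_x$ in the discussion following Definition~\ref{grprep} (the inclusion $T_x^0\leq T_x$ being automatic), the representation $\la_{T/F}$ is a germinal representation of $(T,S^1)$ with $C^*_{\la_{T/F}}(T)=\csx$.

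For the absence of traces I would apply Theorem~\ref{faithful-trace}: since $S^1$ is a faithful $T$-boundary and $\la_{T/F}$ is germinal, $C^*_{\la_{T/F}}(T)$ carries a trace if and only if $S^1$ is topologically free for $T$. But $T$ contains nontrivial elements that restrict to the identity on a proper closed arc --- for instance any nontrivial element of $F$ supported in $[1/2,1]$ --- and such an element fixes a nonempty open set pointwise, so $S^1$ is not topologically free. Hence $C^*_{\la_{T/F}}(T)$ admits no trace.

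For simplicity I would apply Theorem~\ref{thm:C*-simple-main} with $X=S^1$ and $x=0$: the algebra $\csx=C^*_{\la_{T/F}}(T)$ is simple if and only if $T_x/T_x^0=F/F^0$ is amenable, where $F^0$ is the subgroup of elements of $F$ that restrict to the identity on some neighborhood of $0$ in $S^1$. The endpoint-slope map $g\mapsto(\log_2 g'(0^+),\log_2 g'(1^-))$ is a surjective homomorphism $F\to\Z^2$ whose kernel is precisely $F^0$: an element of $F$ with slope $1$ at both ends of $[0,1]$ is, being piecewise linear with finitely many breakpoints, the identity near $0$ and near $1$, hence on a neighborhood of $0$ in $S^1$. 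Therefore $F/F^0\cong\Z^2$ is amenable, and $C^*_{\la_{T/F}}(T)$ is simple.

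The two appeals to the general theorems are routine once the setup is in place; the points that require genuine care are the verification that $T\act S^1$ is a boundary action --- essentially the contraction/strong-proximality argument sketched above --- and the identification of the germ stabilizer $T_0^0$ with the kernel of the endpoint-slope homomorphism of $F$. I do not expect a serious obstacle: both are standard facts about Thompson's groups, and the substance of the proof lies in assembling these ingredients and checking that the hypotheses of Theorems~\ref{faithful-trace} and \ref{thm:C*-simple-main} are met.
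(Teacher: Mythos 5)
Your argument is correct, and it follows the same two-step strategy as the paper (apply Theorem~\ref{faithful-trace} for the absence of traces and Theorem~\ref{thm:C*-simple-main} for simplicity to a faithful, non--topologically-free $T$-boundary with point stabilizer $F$), but with a different choice of boundary. The paper works with the Cantor set $K$ obtained from $[0,1)$ by doubling the dyadic points, takes $x=0_+$, and computes $T_{0_+}^0=\{g\in F: g'(0)=1\}$, so that the relevant quotient $T_{0_+}/T_{0_+}^0$ is $\Z$. You work with $S^1$ directly, where $T_0=F$ and $T_0^0$ is the kernel of the endpoint-slope homomorphism $g\mapsto(\log_2 g'(0^+),\log_2 g'(1^-))$, i.e.\ $[F,F]$, so your quotient is $\Z^2$; the paper itself records this computation of the open stabilizer for the circle action in Example~\ref{counter-ex}, citing \cite[Theorem 4.1]{CFP}. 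Since both $\Z$ and $\Z^2$ are amenable, both routes satisfy the hypothesis of Theorem~\ref{thm:C*-simple-main}, and both actions are faithful but not topologically free, so the trace statement goes through identically. Your route is slightly more economical in that it avoids constructing $K$ and verifying that the doubled space is a Cantor set carrying an extreme boundary action; the paper's choice of $K$ is motivated partly by its use elsewhere (the $V$-action and the comparison with the Haagerup--Olesen representation on $\ell^2(\cX)$). The only points in your write-up that deserve explicit care are the ones you already flag: strong proximality of $T\act S^1$ (your contraction argument via transitivity on dyadic arcs is the standard one and is what the paper uses, via \cite[Lemma 4.2]{CFP} and Glasner's criterion) and the identification of $T_0^0$ with $\ker(F\to\Z^2)$, which is correct because a piecewise-linear element of $F$ with slope $1$ at both endpoints is the identity near $0$ and near $1$, hence near $0\in S^1$.
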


%\medskip

In addition to the introduction, this paper has six other sections. In Section \ref{sec:prelim} we recall some definitions and basic facts, and fix the notation that we will be using in the rest of the paper.

In Section \ref{sec:gen}, we begin our study of boundary maps. We gather some of the key facts and techniques, which are known to experts and have been used in different forms. The main purpose of this section is to formulate an abstract framework in which these techniques can be used in order to provide more clarity on the existing ideas which we improve upon in the proofs of our main results.

In Section \ref{section: qrb} we prove  
uniqueness of boundary maps on $C^*$-algebras of germinal representations of boundary actions (Theorem~\ref{thm:uniquemap-koop}). We use this result to describe traces on these $C^*$-algebras.

In Section \ref{sec:simpl}, we turn our attention to $C^*$-simplicity. We apply Theorem \ref{thm:uniquemap-koop} to conclude results on the ideal structure, and simplicity of the $C^*$-algebras of quasi-regular representations of stabilizers of boundary actions.

More concretely, in Section \ref{sec:thomp}, we apply our results from previous sections to analyze certain quasi-regular representations of Thompson's groups.

Section \ref{section:groupoid} is a short overview of some concepts from groupoid theory.
We remark that despite the terminology used, the proofs of the main results of this work do not involve or require groupoid theory. Nonetheless, some of the ideas used in the proofs of our results in Sections \ref{section: qrb} and \ref{sec:simpl} come from groupoid theory. The main purpose of Section \ref{section:groupoid} is to explain this connection. 

\medskip

Finally, let us remark that in the recent preprint~\cite{KS21}, the authors have extended some of the ideas and techniques used in this work to the study of the ideal structure of $C^*$-algebras generated by germinal representations of more general group actions. For instance, among other results, we generalize  Corollary~\ref{cor:cp} to the setting of minimal actions of countable groups on locally compact spaces~(\cite[Corollary 5.2]{KS21}).  \\

\noindent
\textbf{Acknowledgements.}
We are grateful to the anonymous referee for their insightful comments and suggestions which, in particular, resulted in substantial improvement of the presentation of the paper.

\section{Preliminaries}\label{sec:prelim}

\subsection{Group actions}

Throughout the paper $\G$ is a discrete group, and $\G\act X$ denotes an action of $\G$ by homeomorphisms on a compact Hausdorff space $X$. In this case we say $X$ is a compact $\G$-space. The action $\G\act X$ induces an adjoint action of $\G$ on the weak*-compact convex set $\pr(X)$ of regular probability measures on $X$. Given $f\in C(X)$, let $\supp f:=\overline{\{x\in X:f(x)\neq 0\}}$.

Given a $\G$-space $X$ and $x\in X$, the \emph{stabilizer subgroup} of $x$ is $\G_x:=\{g\in\G : gx=x\}$; and the \emph{open stabilizer} is the subgroup $$\G_x^0:=\{g\in \G : \text{ $g$ fixes an open neighborhood of $x$}\}.$$ Observe that $\G_x^0$ is a normal subgroup of $\G_x$ for any $x\in X$. 

We denote by $X^g$ the fixed set of $g\in\G$, that is, the set $X^g:=\{x\in X : gx=x\}$. Note that $g\in \G_x$ iff $x\in X^g$, and $g\in \G_x^0$ iff $x\in \interior X^g$.

We recall the standard terminology for a given action $\G\act X$. We say
\begin{itemize}
\item
the action is \emph{minimal}, or $X$ is a minimal $\G$-space, if $X$ has no non-empty proper closed $\G$-invariant subsets;
\item
the action is \emph{free}, if $X^g =\emptyset$ for every non-trivial $g\in \G$;
\item
the action is \emph{topologically free}, or $X$ is a topologically free $\G$-space, if $\interior X^g =\emptyset$ for every non-trivial $g\in \G$;
\item
the action is \emph{faithful}, or $X$ is a faithful $\G$-space, if $X^g \neq X$ for every non-trivial $g\in \G$.
\end{itemize}

We denote by $\Sub(\G)$ the set of subgroups of $\G$, endowed with the Chabauty topology; this is the restriction to $\Sub(\G)$ of the product topology on $\{0, 1\}^\G$, where 
every subgroup $\La\in \Sub(G)$ is identified with its characteristic function $\one_\La \in \{0, 1\}^\G$. The space $\Sub(\G)$ is compact and the group $\G$ acts continuously on $\Sub(\G)$ by conjugation:
$$
\G \times \Sub(\G)\to \Sub(\G), \qquad (g, \La)\mapsto g\La g^{-1}.
$$

An \emph{invariant random subgroup} (IRS) on $\G$ is a $\G$-invariant regular probability measure on $\Sub(\G)$.

Given an action $\G\act X$, we denote by $\Stab^0: X \to \Sub(\G)$ the \emph{open stabilizer map} $x\mapsto \G_x^0$. Let $\Xoo$ be the set of points in which $\Stab^0$ is continuous.

\begin{prop}\label{prop:Haus-germ}
Let $X$ be a compact $\G$-space. Then
\begin{enumerate}
\item[(i)]
 $\Xoo=\left(\bigcup_{g\in\G}\partial(\interior X^g)\right)^\mathsf{c}$ ;
\item[(ii)] If $\G$ is countable, then $\Xoo$ is a dense $G_\delta$ subset of $X$;
\item[(iii)] $\{x\in X: \G_x=\G_x^0\}\subset \Xoo$;
\item[(iv)] $\Xoo=X$ iff $\interior X^g$ is closed for all $g\in \G$.
\end{enumerate}
\end{prop}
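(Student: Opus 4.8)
The plan is to reduce all four parts to one observation: for a fixed $g\in\G$, the $g$-th coordinate of $\Stab^0(x)\in\{0,1\}^\G$ equals $\one_{\interior X^g}(x)$, because $g\in\G_x^0$ iff $x\in\interior X^g$. Since the Chabauty topology on $\Sub(\G)$ is the subspace topology inherited from the product $\{0,1\}^\G$, and basic neighborhoods in a product depend on only finitely many coordinates, $\Stab^0$ is continuous at a point $x$ if and only if each coordinate map $x\mapsto\one_{\interior X^g}(x)$ is continuous at $x$. So I would first record the elementary fact that, for an open set $U\subseteq X$, the indicator $\one_U$ is continuous exactly off $\partial U$: it is locally constant equal to $1$ on $U$ and locally constant equal to $0$ on $X\setminus\overline U$, while at a point $x$ of $\partial U=\overline U\setminus U$ there is a net in $U$ converging to $x$ on which $\one_U\equiv 1\neq\one_U(x)$. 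Combining these yields (i): $\Xoo=\bigcap_{g\in\G}\bigl(X\setminus\partial(\interior X^g)\bigr)=\bigl(\bigcup_{g\in\G}\partial(\interior X^g)\bigr)^{\mathsf{c}}$.

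For (ii) I would note that each $\partial(\interior X^g)$ is closed and has empty interior: a nonempty open subset $V$ of $\partial(\interior X^g)\subseteq\overline{\interior X^g}$ must meet $\interior X^g$, contradicting $V\cap\interior X^g=\emptyset$. Hence each $X\setminus\partial(\interior X^g)$ is open and dense. When $\G$ is countable, $\Xoo$ is then a countable intersection of dense open sets, so it is $G_\delta$, and it is dense by the Baire category theorem applied to the compact Hausdorff (hence Baire) space $X$.

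For (iii), fix $x$ with $\G_x=\G_x^0$ and $g\in\G$. If $g\in\G_x^0$ then $x\in\interior X^g$, an open set disjoint from its own boundary, so $x\notin\partial(\interior X^g)$. If $g\notin\G_x^0=\G_x$ then $x\notin X^g$; since $X^g$ is closed, $\overline{\interior X^g}\subseteq X^g$, so again $x\notin\partial(\interior X^g)$. By (i), $x\in\Xoo$. For (iv), (i) shows $\Xoo=X$ iff $\partial(\interior X^g)=\emptyset$ for every $g\in\G$, and since $\interior X^g$ is open its boundary is empty iff it is clopen iff it is closed.

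I do not expect a genuine obstacle: the argument is entirely point-set topology together with the Baire category theorem. The only steps deserving a little care are the reduction of continuity of $\Stab^0$ to continuity of each coordinate map (which uses the finite-support description of neighborhoods in a product space) and the precise identification of the discontinuity set of an open-set indicator with its topological boundary; once these are in place, (i)--(iv) follow formally.
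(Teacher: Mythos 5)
Your proposal is correct and follows essentially the same route as the paper: both arguments rest on identifying the discontinuity set of the $g$-th coordinate of $\Stab^0$ (i.e.\ of $\one_{\interior X^g}$) with $\partial(\interior X^g)$ and then deducing (ii)--(iv) from (i). The only differences are cosmetic --- you prove (iii) directly rather than by contrapositive, and you derive the forward direction of (iv) from (i) instead of via the clopen preimage $(\Stab^0)^{-1}(\{\La : g\in\La\})$ as the paper does --- and both variants are sound.
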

\begin{proof}
(i) If $x\in \partial(\interior X^g)$ for some $g\in\G$, then $x\notin\interior X^g$ but there is a net $(x_i)\subset\interior X^g$ such that $x_i\to x$. Thus, $g\in\G_{x_i}^0$ for every $i$, but $g\notin\G_x^0$. This shows $\Stab^0$ is not continuous at $x$.

Conversely, if $\Stab^0$ is not continuous at a point $x\in X$, then there is $g\in \G$ such that the map $p_g\colon X\to\{0,1\}$ given by $p_g(z)=1\iff g\in\G_z^0$, for $z\in X$, is not continuous at $x$. Clearly, in this case, $p_g(x)=0$, i.e. $g\notin\G_x^0$. Furthermore, there is a net $y_i\in X$ such that $y_i\to x$, $g\in \G_{y_i}^0$ . Equivalently, $y_i\in \interior X^g$ for all $i$ and $x\notin \interior X^g$. Hence, $x\in \partial(\interior X^g)$.

(ii) follows immediately from (i) and the fact that the boundary of any open set has empty interior.

(iii) Let $x\notin  \Xoo$. By (i), there is $g\in\G$ such that $x\in\partial(\interior X^g)$. In particular, $x\in X^g\setminus\interior X^g$ and so $\G_x^0\subsetneq\G_x$.

(iv) Notice that, given $g\in\G$, $\interior X^g=(\Stab^0)^{-1}(\{\La\in\Sub(\G):g\in\G\})$. Hence, if $\Xoo=X$, then $\interior X^g$ is closed for all $g\in\G$.

Conversely, if $\interior X^g$ is closed for all $g\in\G$, then $\bigcup_{g\in\G}\partial(\interior X^g)=\emptyset$, and it follows from (i) that $\Xoo=X$. 
\end{proof}

The conditions in Proposition \ref{prop:Haus-germ}.(iv) are also equivalent to that the groupoid of germs of the action $\G\act X$ is Hausdorff (see section~\ref{section:groupoid} for definitions). Thus, following the terminology of \cite[Definition 2.9]{LeM}, we say the action $\G\act X$ \emph{has Hausdorff germs} if it satisfies the conditions in Proposition \ref{prop:Haus-germ}.(iv).

\begin{remark} 
Let $X$ be a compact $\G$-space.
\begin{enumerate}
\item[(i)] Let $\Stab\colon X\to \Sub(\G)$ be the map $x\mapsto\G_x$. In \cite{LeM}, Le Boudec and Matte Bon denoted the set of continuity points of this map by $X_0$. Furthermore, $X_0=\{x\in X:\G_x=\G_x^0\}$ (see e.g. \cite[Lemma 2.2]{LeM}). 
\item[(ii)] The set $\Xoo$ also received the attention of Nekrashevych in \cite{Nek}. In his terminology, the points in the complement of $\Xoo$ are \emph{purely non-Hausdorff singularities}.
\end{enumerate}
\end{remark}

%%%%%%%%%%%%%%%%%%%%%%%%%%%%%%%%%%%%%%%%%%
\subsection{Group actions on $C^*$-algebras}
A unital $C^{*}$-algebra $A$ is called a \emph{$\G$-$C^{*}$-algebra} if there is an action $\G \act A$ of $\G$ on $A$ by $*$-automorphisms. 
We call a linear map $\phi\colon A\to B$ between $\G$-$C^{*}$-algebras a \emph{$\G$-map} if it is unital completely positive (ucp), and $\G$-equivariant, that is
\[
\phi(ga) = g\phi(a), \quad \forall\, g\in \G, ~ a\in A . 
\]

A $\G$-$C^*$-algebra $A$ is said to be \emph{$\G$-injective} if, given a completely isometric $\G$-map $\psi\colon B\to C$ and a $\G$-map $\varphi\colon B\to A$, there is a $\G$-map $\rho\colon C\to A$ such that $\rho\circ\psi=\varphi$.

A state $\tau$ on a $C^*$-algebra $A$ is a \emph{trace} if $\tau(ab) = \tau(ba)$ for all $a, b \in A$. 

A linear map $\psi\colon A\to B$ between $C^*$-algebras is said to be \emph{faithful} if, given $a\in A$, $\psi(a^*a)=0$ implies $a=0$.

For a probability measure $\nu$ on a compact $\G$-space $X$ we denote by $\cP_{\nu}$ its corresponding \emph{Poisson map}, i.e., the $\G$-map from $C(X)$ to $\ell^\infty(\G)$ defined by
\begin{equation*}
\cP_\nu(f)(g) =  \int g^{-1}fd\nu, \quad \forall g\in \G, ~ f\in C(X) . 
\end{equation*}
If $\nu$ is $\La$-invariant for a subgroup $\La$ of $\G$, then $\cP_{\nu}$ is mapped into $\ell^\infty(\G/\La)$.

If $\nu=\delta_x$ is a point measure for some $x\in X$, we denote the Poisson map simply by $\cP_x$.

%%%%%%%%%%%%%%%%%%%%%%%%%%%%%%%%%%%%%%%%%%
\subsection{Unitary representations}
We denote the class of unitary representations of $\G$ by $\Rep(\G)$.
For $\pi\in \Rep(\G)$ we denote by $C^{*}_{\pi}(\G) := \overline{\operatorname{span}\{\pi(g) : g\in\G\}}^{\|\cdot\|}\subset B (\cH_\pi)$ the $C^*$-algebra generated by $\pi(\G)$ in $B (\cH_\pi)$, where $\cH_\pi$ is the Hilbert space of the representation $\pi$. The group $\G$ acts on $B (\cH_\pi)$ by inner automorphisms $g\cdot a := \pi(g)a\pi(g^{-1})$, $g\in \G$, $a\in B(\cH_\pi)$. 

An important class of unitary representations is the (left) quasi-regular representations $\lambda_{\G/\Lambda} \colon  \G \to \cU(\ell^2(\G/\Lambda))$ defined by
\[
(\lambda_{\G/\Lambda}(g)\xi)(h\Lambda) = \xi(g^{-1}h\Lambda)~~~~~~ \left(h\in\G, ~\xi\in \ell^2(\G/\Lambda)\right),\] 
where $\Lambda \leq \G$ is a subgroup. In the case of the trivial subgroup $\Lambda=\{e\}$, the $C^*$-algebra $C^{*}_{\lambda_\G}(\G)$ is called the \emph{reduced $C^*$-algebra} of $\G$. And for the choice of $\Lambda= \G$ the corresponding quasi-regular representation is the trivial representation of $\G$, which we denote by $1_\G$.

Given a compact $\G$-space $X$ and a Borel $\sigma$-finite quasi-invariant measure $\nu$ on $X$, the \emph{Koopman representation} $\k_\nu$ of $\G$ on $L^2(X, \nu)$ is defined by $\k_\nu(g)\xi(x) = \frac{dg\nu}{d\nu}(x)^{\frac12}\xi(g^{-1}x)$ for $g\in \G$ and $\xi\in L^2(X, \nu)$. Recall that $\nu$ is said to be quasi-invariant if $g\nu$ is in the same measure-class as $\nu$ for every $g\in\G$.

Let $\pi$ and $\sigma$ be two unitary representations of $\G$. We say $\pi$ is weakly contained in $\sigma$, written $\pi\prec\sigma$, if the map $\sigma(g)\mapsto\pi(g)$ extends to a $*$-homomorphism $C^*_\sigma(\G)\to C^*_\pi(\G)$, which then is obviously surjective. 

The Dirac function $\delta_e$ on $\G$ extends to a continuous trace on $C^{*}_{\lambda_\G}(\G)$, and consequently on $C^*_\pi(\G)$ for any $\pi\in\Rep(\G)$ that weakly contains $\la_\G$; we will refer to this trace as the \emph{canonical trace}.
The canonical trace on $C^{*}_{\lambda_\G}(\G)$ is faithful. In particular, the canonical trace is defined on $C^*_\pi(\G)$ iff $\pi$ weakly contains $\la_\G$.

\subsection{Covariant representations}
Given a compact $\G$-space $X$, a \emph{nondegenerate covariant representation} of $(\G,X)$ is a pair $(\pi,\rho)$, where $\pi\in\Rep(\G)$ and $\rho\colon C(X)\to B(\cH_\pi)$ is a unital $\G$-equivariant $*$-homomorphism. In this case, we let $C^*_{\pi\times\rho}(\G,X):=\overline{\mathrm{span}}\{\rho(f)\pi(g):f\in C(X),g\in\G\}$. 

If $(\pi_1,\rho_1)$ and $(\pi_2,\rho_2)$ are nondegenerate covariant representations of $(\G,X)$, we say that $(\pi_1,\rho_1)$ is \emph{weakly contained} in $(\pi_2,\rho_2)$, and write $(\pi_1,\rho_1)\prec(\pi_2,\rho_2)$ if there is a $*$-homomorphism $C^*_{\pi_2\times\rho_2}(\G,X)\to C^*_{\pi_1\times\rho_1}(\G,X)$ which maps $\rho_2(f)\pi_2(g)$ to $\rho_1(f)\pi_1(g)$, for every $g\in\G$ and $f\in C(X)$.
%%%%%%%%%%%%%%%%%%%%%%%%%%%%%%%%%%%%%%%%%%
\subsection{Amenability}
Recall that a discrete group $\G$ is amenable if $\ell^\infty(\G)$ admits a (left) translation invariant state. 
Amenability can also be characterized in terms of weak containment. The group $\G$ is amenable iff $1_\G\prec\lambda_\G$, iff $\pi\prec\lambda_\G$ for every unitary representation $\pi$ of $\G$. A subgroup $\Lambda\leq \G$ is amenable iff $\lambda_{\G/\Lambda}\prec\lambda_\G$.

Let $\Lambda\leq \G$ be a subgroup. We say $\Lambda$ is \emph{co-amenable} in $\G$ if $1_\G \prec \lambda_{\G/\Lambda}$, which is equivalent to the existence of a $\G$-invariant state $\ell^\infty(\G/\Lambda)\to\bC$.   

\begin{proposition}\label{prop:co-am}
Given groups $\La_1\leq\La_2\leq\G$, we have that $\La_1$ is co-amenable in $\La_2$ iff $\la_{\G/\La_2}\prec\la_{\G/\La_1}$. 
\end{proposition}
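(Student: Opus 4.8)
The plan is to reduce the statement to the characterization of co-amenability in terms of weak containment of the trivial representation, and then to transport that along an induction-type identification of quasi-regular representations. First I would recall that, for $\La_1 \leq \La_2$, co-amenability of $\La_1$ in $\La_2$ means by definition $1_{\La_2} \prec \la_{\La_2/\La_1}$. The key observation is that the quasi-regular representation $\la_{\G/\La_1}$ of $\G$ is the induced representation $\operatorname{Ind}_{\La_2}^{\G}(\la_{\La_2/\La_1})$, and likewise $\la_{\G/\La_2} = \operatorname{Ind}_{\La_2}^{\G}(1_{\La_2})$; this is just the transitivity of induction together with $\la_{\G/\La} = \operatorname{Ind}_\La^\G 1_\La$. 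Since induction is continuous with respect to weak containment (Fell), from $1_{\La_2}\prec\la_{\La_2/\La_1}$ we immediately get $\la_{\G/\La_2} = \operatorname{Ind}_{\La_2}^\G 1_{\La_2} \prec \operatorname{Ind}_{\La_2}^\G \la_{\La_2/\La_1} = \la_{\G/\La_1}$, which is one direction.

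For the converse I would argue directly with invariant states rather than through induction, since restriction of the ``Fell continuity'' is the delicate point. Suppose $\la_{\G/\La_2}\prec\la_{\G/\La_1}$. Then the canonical $*$-homomorphism $C^*_{\la_{\G/\La_1}}(\G) \to C^*_{\la_{\G/\La_2}}(\G)$ sending $\la_{\G/\La_1}(g)\mapsto\la_{\G/\La_2}(g)$ exists, and composing with the vector state at the coset $e\La_2 \in \ell^2(\G/\La_2)$ gives a state on $C^*_{\la_{\G/\La_1}}(\G)$ whose value at $\la_{\G/\La_1}(g)$ is $\langle \la_{\G/\La_2}(g)\delta_{e\La_2},\delta_{e\La_2}\rangle$, i.e.\ $1$ if $g\in\La_2$ and $0$ otherwise. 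This is a positive-definite function on $\G$ supported on (in fact equal to the indicator of) $\La_2$ that is a weak-$*$ limit of coefficients of $\la_{\G/\La_1}$. Restricting such coefficients to $\La_2$, one obtains positive-definite functions on $\La_2$ that are coefficients of $\la_{\G/\La_1}|_{\La_2}$; and $\la_{\G/\La_1}|_{\La_2}$ decomposes (over the $\La_2$-orbits on $\G/\La_1$) as a direct sum of representations each of which is, by Mackey's formula, of the form $\la_{\La_2/(\La_2\cap g\La_1 g^{-1})}$. Each of these is weakly contained in $\la_{\La_2/\La_1'}$ for suitable conjugates; the upshot is that the constant function $1$ on $\La_2$ is a weak-$*$ limit of coefficients of representations weakly contained in $\la_{\La_2/\La_1}$, giving $1_{\La_2}\prec\la_{\La_2/\La_1}$, i.e.\ $\La_1$ is co-amenable in $\La_2$. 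An alternative, cleaner route for this direction: produce directly a $\G$-invariant state on $\ell^\infty(\G/\La_1)$ that is ``supported near $e\La_1$'' — more precisely, use $\la_{\G/\La_2}\prec\la_{\G/\La_1}$ to get a net of unit vectors $\xi_i\in\ell^2(\G/\La_1)$ with $\langle\la_{\G/\La_1}(g)\xi_i,\xi_i\rangle\to\one_{\La_2}(g)$; then $|\xi_i|^2$, viewed in $\ell^1(\G/\La_1)$, becomes asymptotically invariant under $\La_2$, and a weak-$*$ limit point in $\ell^\infty(\G/\La_1)^*$ furnishes an $\La_2$-invariant mean on $\ell^\infty(\G/\La_1)$. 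Pushing this mean forward along the $\La_2$-equivariant surjection $\G/\La_1 \twoheadrightarrow \La_2/\La_1$ (well-defined on the $\La_2$-orbit of $e\La_1$, which is exactly $\La_2/\La_1$, and the mean is concentrated there since $\one_{\La_2}$ forces it) yields an $\La_2$-invariant mean on $\ell^\infty(\La_2/\La_1)$, i.e.\ co-amenability.

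The main obstacle I anticipate is making the ``restriction to $\La_2$'' step rigorous: weak containment of $\G$-representations does not in general restrict to weak containment of $\La_2$-representations for arbitrary subgroups, so one cannot simply say $\la_{\G/\La_2}|_{\La_2}\prec\la_{\G/\La_1}|_{\La_2}$ and be done. The fix is precisely to track the single distinguished coefficient (the indicator of $\La_2$) rather than the whole representation: a weak-$*$ limit of diagonal matrix coefficients of $\la_{\G/\La_1}$ equal to $\one_{\La_2}$ on $\G$ restricts, upon limiting in the mean/state picture above, to the constant function $1$ on $\La_2$ as a limit of coefficients of $\la_{\G/\La_1}|_{\La_2}$, and $\la_{\G/\La_1}|_{\La_2}$ is a subrepresentation of $\la_{\La_2} \otimes (\text{mult.})$ only when $\La_1=\{e\}$ — in general one needs Mackey's subgroup theorem to see each $\La_2$-orbit summand is a quasi-regular representation $\la_{\La_2/(\La_2\cap g\La_1g^{-1})}$. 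Since each such is weakly contained in $\la_{\La_2/\La_1}$ whenever the conjugate intersection contains a conjugate of... — here one must be slightly careful, and the cleanest is to avoid Mackey entirely and use the invariant-mean argument of the previous paragraph, which only uses the explicit vectors $\xi_i$ and positivity. I would therefore write the converse via invariant means, and record the forward direction via transitivity of induction plus Fell continuity.
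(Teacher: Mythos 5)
Your forward direction is exactly the paper's argument: write $\la_{\G/\La_2}=\Ind_{\La_2}^{\G}(1_{\La_2})$ and $\la_{\G/\La_1}=\Ind_{\La_2}^{\G}(\la_{\La_2/\La_1})$ and invoke continuity of induction under weak containment. For the converse you take a genuinely different route to the same intermediate object: the paper extends the $*$-homomorphism $C^*_{\la_{\G/\La_1}}(\G)\to C^*_{\la_{\G/\La_2}}(\G)$ to a $\G$-map $\ell^\infty(\G/\La_1)\to\ell^\infty(\G/\La_2)$ and evaluates at $e\La_2$, whereas you use almost-invariant vectors and a Day-type convexity argument. Either way one arrives at a $\La_2$-invariant state on $\ell^\infty(\G/\La_1)$, and the substantive issue in both arguments is how to convert that into a $\La_2$-invariant state on $\ell^\infty(\La_2/\La_1)$.

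This is where your argument has a genuine gap. You assert that the mean obtained as a weak-$*$ limit of the $|\xi_i|^2$ is concentrated on the $\La_2$-orbit of $e\La_1$ ``since $\one_{\La_2}$ forces it.'' It does not. The hypothesis $\langle\la_{\G/\La_1}(g)\xi_i,\xi_i\rangle\to\one_{\La_2}(g)$ yields asymptotic $\La_2$-invariance of $|\xi_i|^2$ in $\ell^1(\G/\La_1)$, but it places no constraint on which $\La_2$-orbits of $\G/\La_1$ carry the mass: the vectors may live entirely on an orbit $\La_2 g\La_1/\La_1\cong\La_2/(\La_2\cap g\La_1 g^{-1})$ with $g\notin\La_2$, and then your pushforward is undefined where the mean lives, since such an orbit admits no $\La_2$-equivariant map onto $\La_2/\La_1$ in general. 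This is not a removable technicality: the existence of a $\La_2$-invariant mean on $\ell^\infty(\G/\La_1)$ alone is the statement that $\La_1$ is co-amenable to $\La_2$ \emph{relative to} $\G$ in the sense of Caprace--Monod (a notion the paper itself invokes later), and that is weaker than co-amenability of $\La_1$ in $\La_2$. The paper closes this step in the opposite direction: rather than pushing the mean forward, it \emph{precomposes} the $\La_2$-invariant state on $\ell^\infty(\G/\La_1)$ with a $\La_2$-equivariant ucp map $\ell^\infty(\La_2/\La_1)\to\ell^\infty(\G/\La_1)$, so that no claim about where the mean is supported is ever needed. To complete your write-up you must either supply such a map or otherwise use the full strength of $\la_{\G/\La_2}\prec\la_{\G/\La_1}$ rather than only the derived invariant mean; as written, the concentration claim is the step that fails.
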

\begin{proof}

If $\La_1$ is co-amenable in $\La_2$, then $$\la_{\G/\La_2} = \Ind_{\La_2}^\G(1_{\La_2})\prec\Ind_{\La_2}^\G(\la_{\La_2/\La_1})= \la_{\G/\La_1}.$$

Conversely, if $\la_{\G/\La_2}\prec\la_{\G/\La_1}$, then there is a $\G$-map from $\ell^\infty(\G/\La_1)$ to $\ell^\infty(\G/\La_2)$, and therefore there is a $\La_2$-invariant state on $\ell^\infty(\G/\La_1)$. Composing this state with a $\La_2$-map from $\ell^\infty(\La_2/\La_1)$ to $\ell^\infty(\G/\La_1)$ yields a $\La_2$-invariant state on $\ell^\infty(\La_2/\La_1)$. Hence, $\La_1$ is co-amenable in $\La_2$.
\end{proof}

%%%%%%%%%%%%%%%%%%%%%%%%%%%%%%%%%%%%%%%%%%
\subsection{Boundary actions}
\label{SS-BoudaryActions}
An action of a group $\G$ on a compact space $X$ is said to be \emph{strongly proximal} if, for every probability $\nu\in\pr(X)$ on $X$, the weak* closure of the orbit $\G\nu$ contains some point measure $\delta_x$, $x\in X$.
The action $\G\act X$ is called a \emph{boundary action} (or $X$ is a \emph{$\G$-boundary}) if it is both minimal and strongly proximal. 

The action $\G\act X$ is called an \emph{extreme boundary action} if $|X|>2$ and for any closed set $C\subsetneq X$ and any open set $\emptyset\neq U\subset X$, there is $g\in \G$ such that $g(C)\subset U$. By \cite[Theorem 2.3]{Gla}, any extreme boundary action is a boundary action.

By \cite{Furst73}, every group $\G$ admits a universal boundary $\partial_F \G$, which we call the \emph{Furstenberg boundary}: $\partial_F \G$ is a $\G$-boundary and every $\G$-boundary is a continuous $\G$-equivariant image of $\partial_F \G$. In fact, for every $\G$-boundary $X$ there is a unique continuous $\G$-equivariant map from $\partial_F \G$ onto $X$, which we will denote by $\b_{\hspace{-0.1em} X}\colon \partial_F \G\to X$. Moreover, any continuous $\G$-equivariant map from $\partial_F \G$ into $\pr(X)$ is mapped onto the set of Dirac measures, hence coming from $\b_{\hspace{-0.1em} X}$ (\cite[Proposition 4.2]{Furst73}). This is equivalent to saying that for any $\G$-boundary $X$ there is a unique $\G$-map from $C(X)$ to $C(\partial_F \G)$, which is the embedding coming from $\b_{\hspace{-0.1em} X}$. We will use this fact frequently in this paper, and we will also refer to it by saying that $C(\partial_F\G)$ is \emph{rigid} (this notion of ridigity is related to, but different from the notion of $\G$-rigid extension introduced in \cite{Ham85}). Furthermore, $C(\partial_F\G)$ is $\G$-injective (\cite[Theorem 3.11]{KalKen}).

\begin{defn}\label{def:bnd-map}
Let $A$ be a $\G$-$C^*$-algebra. A \emph{boundary map} on $A$ is a $\G$-map $\psi \colon A\to C(\partial_F\G)$.
\end{defn}

We recall that $C^*$-simple groups have been characterized in \cite{KalKen} in terms of their actions on boundaries 
as follows (see also \cite{BKKO}).
\begin{thm}[\cite{KalKen}] \label{Theo-KK}
Let $\G$ be a discrete group. The following properties are equivalent:
\begin{itemize}
\item[(i)] $\G$ is $C^*$-simple;
\item [(ii)] the action $\G\act \partial_F\G$ is free;
\item [(iii)] there exists a topologically free boundary action $\G\act X$.
\end{itemize}
\end{thm}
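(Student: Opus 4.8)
The plan is to prove the equivalences as the cycle (i) $\Rightarrow$ (ii) $\Rightarrow$ (iii) $\Rightarrow$ (i), so that the only genuinely substantive steps are the two analytic implications involving the reduced $C^*$-algebra. The implication (ii) $\Rightarrow$ (iii) is essentially definitional: $\partial_F\G$ is itself a $\G$-boundary by construction, and a free action is in particular topologically free, so taking $X=\partial_F\G$ already witnesses (iii). The content therefore concentrates in (iii) $\Rightarrow$ (i) and (i) $\Rightarrow$ (ii), and the strategy throughout is to exploit strong proximality together with the $\G$-injectivity and rigidity of $C(\partial_F\G)$ recorded above.

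For (iii) $\Rightarrow$ (i), I would show that a topologically free boundary action forces the Powers averaging property for $C^*_{\la_\G}(\G)$, which is a known reformulation of $C^*$-simplicity. Fix $a=\sum_{g\in S}a_g\,\la_\G(g)$ in the algebraic span, where $S\subset\G$ is finite; since the canonical trace satisfies $\tau(a)=a_e$, it suffices to place $\tau(a)\one$ in the norm-closed convex hull $\overline{\mathrm{conv}}\{\la_\G(h)\,a\,\la_\G(h)^{-1}:h\in\G\}$. Because $\interior X^g=\emptyset$ for every $g\neq e$ and $\G$ is countable, each $X^g$ is closed with empty interior, so $\bigcup_{g\neq e}X^g$ is meager and the set of points with trivial stabilizer is a dense $G_\delta$; pick such a free point $x_0$. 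Strong proximality lets me choose $h_i\in\G$ steering a chosen reference measure on $X$ toward $\delta_{x_0}$. I would run the averaging against a boundary map $\psi\colon C^*_{\la_\G}(\G)\to C(\partial_F\G)$, which exists by $\G$-injectivity, evaluated at a lift $\omega_0\in\b_X^{-1}(x_0)$: conjugation by $\la_\G(h_i)$ transports the off-diagonal terms $g\neq e$, and since $g\,x_0\neq x_0$ (freeness at $x_0$) their contribution is driven to $0$ in the limit, while the diagonal term survives as $a_e\one=\tau(a)\one$. This yields the averaging property, hence simplicity of $C^*_{\la_\G}(\G)$.

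For (i) $\Rightarrow$ (ii), I would argue by contraposition: assuming $\G\act\partial_F\G$ is not free, I produce a nontrivial ideal in $C^*_{\la_\G}(\G)$. Suppose $g\neq e$ fixes some $\omega\in\partial_F\G$. Here I would use that $C(\partial_F\G)$ is injective, so $\partial_F\G$ is extremally disconnected and the closed set $(\partial_F\G)^g$ has clopen interior; a minimality/proximality spreading argument then pins down a rigid piece of $g$-fixed behaviour. The resulting $\G$-invariant datum manufactures a boundary map on $C^*_{\la_\G}(\G)$ distinct from the canonical trace, and rigidity of $C(\partial_F\G)$ (uniqueness of the $\G$-map out of $C(X)$) is what lets me turn this extra boundary map into a state with nontrivial kernel, hence a proper two-sided ideal. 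Thus $\G$ fails to be $C^*$-simple, completing the cycle.

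The step I expect to be the main obstacle is the averaging argument in (iii) $\Rightarrow$ (i): one must make precise how strong proximality concentrates mass near a free point $x_0$ \emph{and simultaneously} controls all finitely many off-diagonal coefficients of $a$ uniformly, so that the averaged element converges to $\tau(a)\one$ in norm rather than merely weakly. This is exactly where the interplay between the dynamics of $\G\act\partial_F\G$ and the ucp boundary map $\psi$ furnished by $\G$-injectivity becomes indispensable, and it is the technical heart of the Kalantar--Kennedy theorem.
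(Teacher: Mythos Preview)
The paper states this theorem in the preliminaries as a citation from \cite{KalKen} without proof, but later reproves the core equivalence (i)$\iff$(iii) as Corollary~\ref{classical} using its boundary-map machinery. That reproof is quite different from your plan: for (iii)$\Rightarrow$(i) it shows (via Proposition~\ref{prop:supp}) that topological freeness forces every boundary map on $C^*_{\la_\G}(\G)$ to equal the canonical trace, which is faithful, and then applies Proposition~\ref{prop:ideal}; no Powers averaging enters. For (i)$\Rightarrow$(ii) it uses $\G$-injectivity to see that each $\G_x$, $x\in\partial_F\G$, is amenable, whence $\la_{\G/\G_x}=\la_\G$, and then reads off $\interior(\partial_F\G)^g=\emptyset$ from Theorem~\ref{thm:uniquemap-koop}.

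Your sketch has two genuine gaps. In (iii)$\Rightarrow$(i) you invoke countability of $\G$ to obtain a globally free point; the theorem makes no such assumption. This is reparable (for a fixed $a$ with finite support $S$ one only needs a point outside the finitely many nowhere-dense closed sets $X^g$, $g\in S\setminus\{e\}$), but more seriously your mechanism is confused: evaluating a boundary map $\psi$ at a point $\omega_0$ produces a \emph{state}, which controls weak convergence of the averaged conjugates, not the norm convergence that Powers averaging requires. The actual argument pushes into $C(X)\rtimes_r\G$ and uses strong proximality there; your description does not indicate how that passage is made.

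The more serious gap is in (i)$\Rightarrow$(ii). From a single fixed point $\omega\in(\partial_F\G)^g$ you claim $(\partial_F\G)^g$ has nonempty interior, but extremal disconnectedness says only that closures of open sets are open, not that nonempty closed sets have nonempty interior. The missing ingredient is precisely that $\G_\omega=\G_\omega^0$ on $\partial_F\G$ (\cite[Lemma~3.4]{BKKO}), itself a nontrivial consequence of $\G$-injectivity. Even granting that, your plan to ``manufacture a boundary map distinct from the canonical trace'' and convert it into a proper ideal is not justified: a boundary map $\psi\neq\tau$ need not satisfy $I_\psi\neq 0$ a priori, so you are effectively appealing to Kennedy's characterization, which sits at the same logical level as the theorem itself. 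The paper's route via amenability of $\G_\omega$ avoids this circularity entirely.
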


%\medskip

Generally, every group $\G$ has a largest amenable normal subgroup, called the \emph{amenable radical} $\Rad(\G)$. It was proved by Furman in \cite{Furm} that $\Rad(\G)$ coincides with the kernel of the action of $\G$ on its Furstenberg boundary $\partial_F \G$.

We also recall the following description of traces on $C^*_{\la_\G}(\G)$ from \cite{BKKO}.
\begin{thm}[\cite{BKKO}] \label{BKKO-tr}
Every trace on $C^*_{\la_\G}(\G)$ is supported on the amenable radical $\Rad(\G)$ of $\G$.
\end{thm}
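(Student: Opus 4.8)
The plan is to reduce the statement to a Powers-type averaging estimate for conjugates of a single group element, and then feed in the dynamics of the Furstenberg boundary. Write $N:=\Rad(\G)$. By Furman's theorem (recalled above), $N$ is exactly the kernel of the action $\G\act\partial_F\G$, so $g\in\G$ lies outside $N$ precisely when $g$ acts nontrivially on $\partial_F\G$. Saying that a trace $\tau$ on $C^*_{\la_\G}(\G)$ is supported on $N$ means that $\tau(\la_g)=0$ for every $g\in\G\setminus N$ (equivalently $\tau=\tau\circ E_N$ for the canonical conditional expectation $E_N\colon C^*_{\la_\G}(\G)\to C^*_{\la_N}(N)$, since $E_N(\la_g)=\la_g$ for $g\in N$ and $E_N(\la_g)=0$ otherwise). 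So it suffices to prove: if $g$ acts nontrivially on $\partial_F\G$, then $\tau(\la_g)=0$ for every trace $\tau$.

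The first, soft, observation is that traciality forces $\tau$ to be invariant under the adjoint action, $\tau(\la_h\la_g\la_h^{-1})=\tau(\la_{hgh^{-1}})=\tau(\la_g)$ for all $h\in\G$; hence $\tau$ is constant, equal to $\tau(\la_g)$, on the norm-closed convex hull $K_g:=\overline{\operatorname{conv}}\{\la_{hgh^{-1}}:h\in\G\}\subseteq C^*_{\la_\G}(\G)$. Since $\tau$ is a state, $|\tau(\mu)|\le\|\mu\|$ for every $\mu\in K_g$, so $|\tau(\la_g)|\le\inf\{\|\mu\|:\mu\in K_g\}$. Thus everything reduces to the averaging claim: if $g$ acts nontrivially on $\partial_F\G$, then $0\in K_g$.

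To prove the averaging claim I would fix $x_0\in\partial_F\G$ with $gx_0\neq x_0$ and, by Hausdorffness, an open $V\ni x_0$ with $gV\cap V=\emptyset$. Then I would exploit the two defining features of the boundary action $\G\act\partial_F\G$: minimality allows one to cover $\partial_F\G$ by finitely many translates of $V$, while strong proximality allows one to push any finitely supported probability measure arbitrarily close to a point mass. Iterating, one produces, for each $\varepsilon>0$, elements $h_1,\dots,h_n\in\G$ together with a partition of $\partial_F\G$ realizing a free-group-like configuration for the conjugates $h_igh_i^{-1}$. Passing to the reduced crossed product $C(\partial_F\G)\rtimes_r\G$, which contains $C^*_{\la_\G}(\G)$ isometrically with $\la_g\mapsto u_g$ and $\la_{hgh^{-1}}\mapsto u_hu_gu_h^{-1}$, the standard Powers estimate then gives $\big\|\tfrac1n\sum_{i=1}^n\la_{h_igh_i^{-1}}\big\|=\big\|\tfrac1n\sum_{i=1}^n u_{h_i}u_gu_{h_i}^{-1}\big\|\le\varepsilon$. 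Hence $\inf\{\|\mu\|:\mu\in K_g\}=0$, and combining with the previous paragraph, $\tau(\la_g)=0$.

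The main obstacle is precisely this averaging claim. The subtle point is that merely having an infinite conjugacy class does not make averages of conjugates small in operator norm (averaging powers of a fixed infinite-order element gives norm $1$); one genuinely needs the topological dynamics — minimality to spread $V$ over all of $\partial_F\G$, strong proximality to separate the pieces — together with a careful operator-norm computation modelled on the free group. Everything else (the reduction via $E_N$ and Furman's identification of $\Rad(\G)$, the convexity/traciality estimate, and the isometric embedding $C^*_{\la_\G}(\G)\hookrightarrow C(\partial_F\G)\rtimes_r\G$) is routine.
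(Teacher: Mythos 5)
Your argument is correct in outline, but it is a genuinely different proof from the one this paper (following \cite{BKKO}) is built on. The paper records Theorem~\ref{BKKO-tr} as a citation and in effect reproves it through Corollary~\ref{cor:trace-supp-N}: a trace $\tau$ is viewed as a boundary map with constant values, Proposition~\ref{prop:supp} shows that $\supp\tau(\la_g)\subset\Delta_g=\overline{\interior(\partial_F\G)^g}$, and for $g\notin\Rad(\G)$ Furman's theorem \cite{Furm} gives $\Delta_g\neq\partial_F\G$, forcing the constant $\tau(\la_g)$ to vanish; the only dynamical input is Lemma~\ref{lem:minint} together with the singular-state/multiplicative-domain argument from \cite{HartKal}. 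You instead reduce to the averaging claim $0\in\overline{\operatorname{conv}}\{\la(hgh^{-1}):h\in\G\}$ for every $g$ acting nontrivially on $\partial_F\G$, and prove it by a Powers-type estimate in $C(\partial_F\G)\rtimes_r\G$; this is exactly Haagerup's route \cite{Haag15}*{Theorem 4.3} (see also Remark (v) in Section~\ref{sec:gen} of this paper), and the one step you leave as a sketch --- producing $t_1,\dots,t_n$ so that each point of $\partial_F\G$ leaves $V$ under at most one $t_i$, via minimality plus strong proximality, and then running the Powers norm estimate --- is standard and does go through. The trade-off: the boundary-map argument is softer, needs no norm estimates, and generalizes verbatim to any $\pi$ admitting a $\G$-map $C(X)\to B(\cH_\pi)$ (which is how the paper uses it for germinal and quasi-regular representations), whereas your averaging argument yields the strictly stronger quantitative conclusion that the closed convex hull of the conjugates of $\la_g$ contains $0$, from which the vanishing of every trace on $g\notin\Rad(\G)$ is immediate.
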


This yields a complete characterization of groups with the unique trace property. 
\begin{thm} [\cite{BKKO}]\label{BKKO-unqtr}
Let $\G$ be a discrete group. The following properties are equivalent:
\begin{itemize}
\item[(i)] the canonical trace is the unique trace on $C^*_{\la_\G}(\G)$;
\item [(ii)] the action $\G\act \partial_F \G$ is faithful;
\item [(iii)] there exists a faithful boundary action $\G\act X$.
\end{itemize}
\end{thm}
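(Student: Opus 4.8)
The plan is to reduce the whole statement to the two results quoted immediately above: Furman's identification $\Rad(\G)=\ker(\G\act\partial_F\G)$, and Theorem~\ref{BKKO-tr}, that every trace on $C^*_{\la_\G}(\G)$ is supported on $\Rad(\G)$. Everything else is soft. First I would dispose of (ii)~$\Leftrightarrow$~(iii) using only universality and rigidity of the Furstenberg boundary. For (ii)~$\Rightarrow$~(iii), note that $\partial_F\G$ is itself a $\G$-boundary, so (ii) directly exhibits a faithful boundary action. For (iii)~$\Rightarrow$~(ii), given a faithful boundary action $\G\act X$, I would invoke the continuous surjective $\G$-equivariant map $\b_{\hspace{-0.1em} X}\colon\partial_F\G\to X$: if $g$ fixes $\partial_F\G$ pointwise, then $g\b_{\hspace{-0.1em} X}(\omega)=\b_{\hspace{-0.1em} X}(g\omega)=\b_{\hspace{-0.1em} X}(\omega)$ for all $\omega$, and surjectivity of $\b_{\hspace{-0.1em} X}$ forces $g$ to fix $X$ pointwise, whence $g=e$. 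Thus $\ker(\G\act\partial_F\G)\subseteq\ker(\G\act X)=\{e\}$, which is (ii).

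For the core equivalence (i)~$\Leftrightarrow$~(ii), I would first rewrite (ii), via Furman, as the condition $\Rad(\G)=\{e\}$. The direction (ii)~$\Rightarrow$~(i) is then immediate from Theorem~\ref{BKKO-tr}: any trace $\tau$ must satisfy $\tau(\la_g)=0$ for every $g\notin\Rad(\G)=\{e\}$, i.e.\ for all $g\neq e$, so on the dense $*$-subalgebra $\operatorname{span}\{\la_g:g\in\G\}$ it agrees with $\delta_e$, and by norm-continuity $\tau$ is the canonical trace.

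For the converse I would argue contrapositively, constructing a non-canonical trace whenever $N:=\Rad(\G)\neq\{e\}$. The candidate is $\varphi:=\one_N$, realized as the diagonal matrix coefficient $g\mapsto\langle\la_{\G/N}(g)\delta_{eN},\delta_{eN}\rangle$ of the quasi-regular representation; this is positive definite by construction. Normality of $N$ gives $gh\in N\iff hg\in N$, hence $\varphi(gh)=\varphi(hg)$, and amenability of $N$ gives $\la_{\G/N}\prec\la_\G$ (by the weak-containment criterion recalled in the preliminaries), so $\varphi$ extends to a state $\tau$ on $C^*_{\la_\G}(\G)$. The trace identity, checked on the spanning unitaries, promotes $\tau$ to a trace, and $\tau(\la_g)=1$ for any $e\neq g\in N$ shows $\tau\neq\delta_e$; this contradicts (i).

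The only genuinely hard input is Theorem~\ref{BKKO-tr}, which I am taking as a black box; granting it, the remaining obstacle is purely bookkeeping — verifying that $\one_N$ really defines a trace (the trace property from normality of $N$, the extension to $C^*_{\la_\G}(\G)$ from amenability of $N$) and tracking equivariance and surjectivity of $\b_{\hspace{-0.1em} X}$ in (iii)~$\Rightarrow$~(ii). I expect no essential difficulty beyond correctly citing these two deep prior results.
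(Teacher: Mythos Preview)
The paper does not supply its own proof of this theorem; it is quoted from \cite{BKKO} as a preliminary fact (Theorem~\ref{BKKO-unqtr}) and used as input elsewhere. So there is nothing in the paper to compare your argument against directly.

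That said, your proposal is correct and is essentially the standard derivation from the two black boxes you cite. The equivalence (ii)~$\Leftrightarrow$~(iii) via the equivariant surjection $\b_{\hspace{-0.1em} X}$ is exactly right. For (ii)~$\Rightarrow$~(i), combining Furman's $\Rad(\G)=\ker(\G\act\partial_F\G)$ with Theorem~\ref{BKKO-tr} is the intended route. For $\neg$(ii)~$\Rightarrow$~$\neg$(i), your construction of the trace $\one_N$ is fine; the only point worth spelling out a bit more carefully is the passage from $\one_N(gh)=\one_N(hg)$ on $\G$ to the full trace identity on $C^*_{\la_\G}(\G)$: fix $g$ and use continuity in the second variable to get $\tau(\la_g a)=\tau(a\la_g)$ for all $a$, then density of $\operatorname{span}\{\la_g\}$ and continuity in the first variable finish it. With that bookkeeping in place, nothing is missing.
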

A long well-known fact (see \cite[Lemma p.289]{BCH}) is that a group with a non-trivial amenable radical is not $C^*$-simple; examples of non $C^*$-simple groups with a trivial amenable radical have been given in \cite{LeBou}.

\medskip

We will need the following result from \cite{BKKO}. As observed in \cite[Remark 3.4]{Urs}, there is a minor inaccuracy in the proof of this result in \cite{BKKO}. For the convenience of the reader, we give the complete argument here.
\begin{lemma}[{\cite[Lemma 3.2]{BKKO}}]\label{lem:minint}
A continuous equivariant map $\pi\colon Y\to X$ between minimal compact $\G$-spaces sends sets of non-empty interior to sets of non-empty interior.
\end{lemma}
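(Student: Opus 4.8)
The plan is to use the minimality of both spaces together with a Baire category / compactness argument. Suppose $U \subseteq Y$ is open with $\interior \pi(U) = \emptyset$; I want to derive a contradiction with minimality of $Y$ (or of $X$).

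First I would observe that by minimality of $Y$, finitely many translates $g_1 U, \dots, g_n U$ cover $Y$: indeed $\bigcup_{g\in\G} gU$ is a nonempty open $\G$-invariant subset of the minimal space $Y$, hence equals $Y$, and compactness gives a finite subcover. Applying the equivariant map $\pi$ and using $\pi(gU) = g\pi(U)$, we get $X = \pi(Y) = \bigcup_{i=1}^n g_i \pi(U)$. Now each $g_i\pi(U)$ is a set whose interior is empty (homeomorphisms preserve this property), so $X$ is a finite union of sets with empty interior. Taking closures, $X = \bigcup_{i=1}^n \overline{g_i\pi(U)}$, a finite union of closed sets each with empty interior (the closure of a set with empty interior need not have empty interior in general, so this is exactly the subtle point flagged in the remark about the inaccuracy in \cite{BKKO}).

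To handle this correctly I would instead argue as follows. Since $X$ is compact Hausdorff, it is a Baire space; if $X$ were a finite union of closed sets, at least one of them, say $\overline{g_1 \pi(U)}$, would have nonempty interior. So it suffices to rule out the possibility that $\overline{\pi(U)}$ has nonempty interior while $\pi(U)$ itself does not — i.e., to show that a continuous equivariant map between minimal $\G$-spaces cannot have the closure of the image of an open set strictly larger (in the "interior" sense) than what we want. The clean way: let $W = \interior \overline{\pi(U)}$. By minimality of $X$, finitely many translates of $W$ cover $X$, and pulling back, finitely many translates of $\pi^{-1}(\overline{\pi(U)}) \supseteq \overline{U}' $... this still needs care. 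The cleanest route, and the one I would commit to, is: assume toward a contradiction that $V := \interior \pi(U) = \emptyset$. Consider $F := X \setminus \bigcup_{g} g\,\interior(\overline{\pi(U)})$; by minimality this is either empty or all of $X$, and one shows it is empty, so $\interior(\overline{\pi(U)}) \neq \emptyset$, say it contains a nonempty open set $V_0$. Then $\pi^{-1}(V_0)$ is a nonempty open subset of $Y$ contained in $\overline{\pi^{-1}(\pi(U))} \supseteq \overline{U}$; more precisely $\pi^{-1}(V_0) \subseteq \pi^{-1}(\overline{\pi(U)})$, and one checks $\pi^{-1}(\overline{\pi(U)}) = \overline{\pi^{-1}(\pi(U))}$ fails in general, so instead use: $V_0 \subseteq \overline{\pi(U)}$ forces $V_0 \cap \pi(U) \neq \emptyset$ (as $V_0$ is open and nonempty), hence $\pi(U) \cap V_0$ is a nonempty subset of the open set $V_0$; but $\pi(U)$ has empty interior, so $\pi(U) \cap V_0$ has empty interior in $V_0$, which is fine. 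The actual contradiction must come from $U$ being open in $Y$: $U \cap \pi^{-1}(V_0)$ is open in $Y$, and $\pi$ maps it into $\pi(U) \cap V_0$; but I want to instead show $\pi(U)$ contains an open set.

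Let me restart the plan more decisively. I would argue: WLOG replace $U$ by a smaller nonempty open set with $\overline{U} \subsetneq Y$ if possible, or handle $Y$ a single point trivially. By minimality of $Y$ applied to $Y \setminus \overline{U}$ (a proper open invariant-hull argument), one sees that $Y$ is covered by finitely many translates $g_1 U, \ldots, g_n U$. Then $X$ is covered by $g_1 \pi(U), \ldots, g_n \pi(U)$. Each $\pi(U)$ is a subset of $X$; I claim $\pi(U)$ has nonempty interior. If not, then $\pi(\overline{U}) \supseteq \overline{\pi(U)}$ is closed with — no. The honest resolution, which is what the paper presumably does: pass to closures to get $X = \bigcup \overline{g_i\pi(U)}$; by Baire, some $\overline{g_i \pi(U)}$ has nonempty interior, hence $\overline{\pi(U)}$ has nonempty interior; now here is the key lemma-within-the-lemma to be proved separately — for a continuous equivariant map $\pi : Y \to X$ between minimal $\G$-spaces and $U \subseteq Y$ open, $\overline{\pi(U)}$ having nonempty interior implies $\pi(U)$ has nonempty interior. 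This I would prove by: let $V = \interior\overline{\pi(U)} \neq \emptyset$; then $\pi^{-1}(V)$ is open nonempty in $Y$, so by minimality $\G \cdot \pi^{-1}(V) = Y$, in particular $U \cap g\pi^{-1}(V) \neq \emptyset$ for some $g$, i.e. $g^{-1}U \cap \pi^{-1}(V) \neq\emptyset$; replacing $U$ by $g^{-1} U \cap \pi^{-1}(V)$ — an open set still with empty-interior image — we may assume $\pi(U) \subseteq V \subseteq \overline{\pi(U_0)}$ where $U_0$ is the original; hmm, the translate-bookkeeping is the fiddly part. I expect the main obstacle to be exactly this: correctly combining minimality on both sides with the closure operation so that "nonempty interior of the closure" upgrades to "nonempty interior of the image itself," which is precisely the gap \cite[Remark 3.4]{Urs} points out. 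Once that upgrade is handled — by intersecting with a suitable translate and re-running the covering argument — the proof concludes, since $\interior\pi(U) \neq \emptyset$ contradicts the assumption.

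\begin{remark*}
In writing this up cleanly I would structure it as: (1) finite covering of $Y$ by translates of $U$ via minimality; (2) hence $X = \bigcup_i \overline{g_i\pi(U)}$ and Baire gives $\interior\overline{\pi(U)}\neq\emptyset$; (3) set $V=\interior\overline{\pi(U)}$, note $\pi^{-1}(V)$ is open and nonempty, intersect with translates of $U$ and use minimality of $Y$ to reduce to the case $\emptyset \neq U' \subseteq U$ with $\pi(U')\subseteq V$; then $\overline{\pi(U')}$ still has nonempty interior, so $\pi(U')$ is dense in some nonempty open $W\subseteq V$, and a further minimality argument on $Y$ forces $\pi(U')$ to actually contain a nonempty open set — giving $\interior\pi(U)\neq\emptyset$. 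The care needed in step (3) is the crux.
\end{remark*}
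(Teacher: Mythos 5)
Your proposal correctly identifies the delicate point (the one flagged in \cite[Remark 3.4]{Urs}): after covering $X$ by finitely many translates of $\pi(U)$ and passing to closures, Baire only gives $\interior\overline{\pi(U)}\neq\emptyset$, and one must somehow upgrade this to $\interior\pi(U)\neq\emptyset$. But you never actually carry out that upgrade. Your step (3) ends with ``$\pi(U')$ is dense in some nonempty open $W$, and a further minimality argument on $Y$ forces $\pi(U')$ to actually contain a nonempty open set'' --- this is precisely the assertion to be proved, and no argument is supplied; a dense subset of an open set need not contain any open set, and it is not explained how minimality of $Y$ would intervene. As written, the proposal is a sequence of aborted attempts that circles the gap without closing it, so it does not constitute a proof.

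The missing idea, which is how the paper repairs the BKKO argument, is to shrink \emph{before} mapping rather than to take closures \emph{after} mapping. Since $Y$ is compact Hausdorff (hence regular), choose a nonempty open $V\subset U$ with $\overline{V}\subset U$. Minimality and compactness give $Y=\bigcup_{g\in F}gV$ for a finite $F\subset\G$, hence $X=\pi(Y)=\bigcup_{g\in F}g\,\pi(\overline{V})$. Now $\pi(\overline{V})$ is the continuous image of a compact set in a Hausdorff space, so it is already closed; Baire applied to this finite cover by closed sets yields $\interior\pi(\overline{V})\neq\emptyset$, and $\pi(\overline{V})\subset\pi(U)$ finishes the proof. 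No closure of $\pi(U)$ ever appears, so the problematic upgrade step is avoided entirely. You should restructure your argument around this shrinking trick; the machinery you set up in steps (1)--(2) is otherwise sound.
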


\begin{proof}
Let $U\subset Y$ be a set of non-empty interior and take $V\subset U$ a non-empty open subset such that $\overline{V}\subset U$. By minimality and compactness, we have that $Y=\bigcup_{g\in F}gV$ for some finite subset $F\subset \G$.  Hence $X=\bigcup_{g\in F}g\pi(\overline{V})$.  By Baire's theorem, we conclude that $\pi(\overline{V})\subset\pi(U)$ has non-empty interior.
\end{proof}

%%%%%%%%%%%%%%%%%%%%%%%%%%%%%%%%%%%%%%%%%%

\section{Boundary actions and unitary representations}\label{sec:gen}

We now begin our study of boundary maps. In particular, given a $\G$-boundary $X$ and $\pi\in\Rep(\G)$, we investigate $\G$-maps from $C(X)$ to $B(\cH_\pi)$ and from $C^*_\pi(\G)$ to $C(\partial_F\G)$, and the connections to the subgroup structures of $\G$. 

The idea of utilizing the dynamics of boundary actions in describing noncommutative boundary maps originated in \cite{BKKO}, \cite{Haag15} for the special case of traces, and in \cite{Ken15} for general boundary maps on $C^*_{\la_\G}(G)$. In this paper we are interested in more general class of $C^*$-algebras generated by $\G$, and will extend the techniques in order to cover these cases. 

But we begin in this section with gathering some of the key techniques and observations, which are known to experts and have been used in different forms in the above mentioned works as well as others that followed them. Indeed, the main purpose of the section is to formulate an abstract framework in which these techniques can be used in order to provide more clarity on the existing ideas which we improve upon in the proofs of our main results in later sections.

\medskip

A key observation regarding the connection of boundary maps to $C^*$-simplicity is the following.
\begin{proposition}\label{prop:ideal}
Given $\pi\in\Rep(\G)$ and a boundary map $\psi$ on $C^*_\pi(\G)$, let $I_\psi:=\{a\in C^*_\pi(\G):\psi(a^*a)=0\}$. Then $I_\psi$ is an ideal of $C^*_\pi(\G)$ and for every proper ideal $J\unlhd C^*_\pi(\G)$, there is a boundary map $\psi$ on $C^*_\pi(\G)$ such that $J\subset I_\psi$.

In particular, if $ C^*_\pi(\G)$ admits a unique boundary map $\psi$, then $I_\psi$ contains all proper ideals of $C^*_\pi(\G)$, and $C^*_\pi(\G)/I_\psi$ is the unique non-zero simple quotient of $C^*_\pi(\G)$.
\end{proposition}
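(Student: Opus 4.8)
The plan is to establish three things: that $I_\psi$ is a (closed, two-sided) ideal for any boundary map $\psi$; that every proper ideal $J$ is contained in $I_\psi$ for some boundary map $\psi$; and then to read off the final assertion about the unique simple quotient as a formal consequence. For the first part, since $\psi$ is ucp and $C(\partial_F\G)$ is abelian, the Cauchy–Schwarz inequality for ucp maps gives $\psi(a)^*\psi(a)\le\psi(a^*a)$, so $I_\psi=\{a:\psi(a^*a)=0\}$ is exactly the left kernel of $\psi$ and $\psi$ vanishes on $I_\psi$; moreover $I_\psi$ is closed since $\psi$ is continuous. To see $I_\psi$ is a right ideal one uses the Kadison–Schwarz/multiplicative-domain argument: for $a\in I_\psi$ and $b\in C^*_\pi(\G)$, $\psi((ab)^*(ab))\le\|b\|^2\psi(a^*a)=0$ via the standard inequality $\psi(x^*c^*cx)\le\|c\|^2\psi(x^*x)$. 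For the left ideal property one instead checks that $a\in I_\psi$ lies in the left multiplicative domain, so $\psi(ba)=\psi(b)\psi(a)=0$; combined with $\G$-equivariance (which is automatic and not even needed for the ideal property), $I_\psi$ is a closed two-sided ideal.

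For the second part — the heart of the argument — let $J\unlhd C^*_\pi(\G)$ be a proper ideal and consider the quotient $C^*$-algebra $B:=C^*_\pi(\G)/J$ with quotient map $q\colon C^*_\pi(\G)\to B$. The group $\G$ acts on $C^*_\pi(\G)$ by the inner automorphisms $g\cdot a=\pi(g)a\pi(g)^{-1}$, and since $J$ is an ideal it is automatically $\G$-invariant, so $B$ is a $\G$-$C^*$-algebra and $q$ is a $\G$-map. Since $B\ne 0$, pick any state $\varphi_0$ on $B$; then $\varphi_0\circ q$ is a state on $C^*_\pi(\G)$, i.e. a $\G$-map into $\C\subset C(\partial_F\G)$. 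Now I would invoke $\G$-injectivity of $C(\partial_F\G)$ (Theorem 3.11 of \cite{KalKen}, recalled in the excerpt): the inclusion $\C\hookrightarrow B$ is a completely isometric $\G$-map (unital inclusion of $\C$ into a $\G$-$C^*$-algebra, with trivial $\G$-action on $\C$), and applying $\G$-injectivity to the diagram with $\varphi_0\circ q\colon C^*_\pi(\G)\to\C$ factoring through... — actually the cleaner route is: apply $\G$-injectivity directly to extend along the quotient map. Consider the $\G$-map $\iota\colon C^*_\pi(\G)\to C^*_\pi(\G)$ given by identity — no; the right formulation is to take a $\G$-map $\tilde\psi\colon B\to C(\partial_F\G)$ whose existence is guaranteed because $C(\partial_F\G)$ is $\G$-injective and $B$ sits as a $\G$-$C^*$-subalgebra... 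Instead: by $\G$-injectivity of $C(\partial_F\G)$, any $\G$-map from a $\G$-$C^*$-subalgebra extends; but most simply, $\G$-injectivity gives that there exists \emph{some} $\G$-map $\theta\colon B\to C(\partial_F\G)$ (extend the $\G$-map $\C\to C(\partial_F\G)$ along the completely isometric $\G$-embedding $\C\hookrightarrow B$). Replacing $\theta$ by a convex combination is not needed; what we need is faithfulness-type control, which we don't have — so instead pick $\theta$ to be \emph{any} $\G$-map $B\to C(\partial_F\G)$ and set $\psi:=\theta\circ q$. Then $\psi$ is a boundary map on $C^*_\pi(\G)$ and $J\subset\ker q\subset\ker\psi\subset I_\psi$, as desired. (One should double check the $\G$-embedding $\C\hookrightarrow B$ is completely isometric and $\G$-equivariant, which it is since $\C$ carries the trivial action and unital $*$-homomorphisms out of $\C$ are isometric.)

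For the final assertion: if $\psi$ is the \emph{unique} boundary map, then the previous paragraph shows every proper ideal $J$ satisfies $J\subset I_\psi$, so $I_\psi$ is the largest proper ideal (it is proper because $\psi(1)=1\ne 0$, so $1\notin I_\psi$). Hence $C^*_\pi(\G)/I_\psi$ is simple, and any non-zero simple quotient $C^*_\pi(\G)/K$ has $K$ a proper ideal, so $K\subset I_\psi$; simplicity of $C^*_\pi(\G)/K$ then forces $K=I_\psi$. Thus $C^*_\pi(\G)/I_\psi$ is the unique non-zero simple quotient. I expect the main obstacle to be the bookkeeping in the second part: getting the $\G$-equivariant structure on the quotient and correctly invoking $\G$-injectivity to produce the map $\theta\colon B\to C(\partial_F\G)$ (as opposed to a non-equivariant ucp map), and making sure the composite $\theta\circ q$ genuinely lands in $C(\partial_F\G)$ with $J$ inside its kernel. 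The ideal-property verification in the first part is routine multiplicative-domain manipulation and should not cause trouble.
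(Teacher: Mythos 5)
Your second and third parts are essentially the paper's argument: the inner action descends to $C^*_\pi(\G)/J$, $\G$-injectivity of $C(\partial_F\G)$ produces a $\G$-map $\theta$ out of the quotient (your route, extending the unital $\G$-map $\C\to C(\partial_F\G)$ along the completely isometric $\G$-embedding $\C\hookrightarrow C^*_\pi(\G)/J$, is the standard way to instantiate this), and $\psi:=\theta\circ q$ does the job. One small slip there: the inclusion $\ker\psi\subset I_\psi$ in your chain is false for a general ucp map; what you actually need is that $a\in J$ implies $a^*a\in J\subset\ker q$, which gives $a\in I_{\theta\circ q}$ directly.

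The genuine gap is in your verification that $I_\psi$ is a right ideal. The inequality you invoke, $\psi((ab)^*(ab))\le\|b\|^2\psi(a^*a)$, does not follow from the Schwarz inequality: $(ab)^*(ab)=b^*a^*ab$, and your own ``standard inequality'' $\psi(x^*c^*cx)\le\|c\|^2\psi(x^*x)$ applied here only yields $\psi(b^*a^*ab)\le\|a\|^2\psi(b^*b)$, which bounds the wrong quantity; the operator inequality $b^*a^*ab\le\|b\|^2a^*a$ is simply false. (That same Schwarz estimate, with $x=a$ and $c=b$, is exactly what proves the \emph{left} ideal property, so you have the right tool pointed at the wrong side; the multiplicative-domain remark only gives $\psi(ba)=0$, not $ba\in I_\psi$, but the left-ideal step is fine regardless.) In general the left kernel $\{a:\psi(a^*a)=0\}$ of a ucp map is only a closed left ideal --- for a vector state $\langle\,\cdot\,\xi,\xi\rangle$ on $B(H)$ it is $\{a:a\xi=0\}$, which is not a right ideal --- so your parenthetical claim that $\G$-equivariance is ``not even needed for the ideal property'' is exactly backwards: equivariance is the whole point of this step. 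The paper's argument is that for $a\in I_\psi$ and $g\in\G$ one has $(a\pi(g))^*(a\pi(g))=\pi(g^{-1})a^*a\pi(g)$, whence $\psi\bigl((a\pi(g))^*(a\pi(g))\bigr)=g^{-1}\cdot\psi(a^*a)=0$; since $I_\psi$ is a closed subspace and $\operatorname{span}\pi(\G)$ is dense in $C^*_\pi(\G)$, this yields the right-ideal property. With that replacement your proof is complete.
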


\begin{proof}
That $I_\psi$ is a left ideal of $C^*_\pi(\G)$ follows from the Schwarz inequality for ucp maps. We also see from $\G$-equivariance that for $a\in I_\psi$ and $g\in\G$, $$\psi(\pi(g^{-1})a^*a\pi(g)) = g^{-1}\cdot\psi(a^*a) = 0,$$ which shows $I_\psi$ is also a right ideal. Now given a proper ideal $J\unlhd C^*_\pi(\G)$, the $\G$-action naturally descends to the quotient $C^*_\pi(\G)/J$. By $\G$-injectivity, there is a $\G$-map $C^*_\pi(\G)/J \to C(\partial_F\G)$. Composing the latter map with the canonical quotient map $C^*_\pi(\G)\to C^*_\pi(\G)/J$, we get a $\G$-map $\psi\colon C^*_\pi(\G)\to C(\partial_F\G)$ such that $J\subset I_\psi$.

The remaining assertions are immediate from the above.
\end{proof}

The following is one of the key properties which we will exploit in this paper. In fact, this result gives one half of the description of boundary maps we prove in Theorem~\ref{thm:uniquemap-koop}.
The result is essentially known, has been used before in other works, and in the case of covariant representations follows from techniques of \cite{BKKO}. 

We recall that for a $\G$-boundary $X$ and $g\in\G$ we denote by $\Delta_g$ the set $\overline{\b_{\hspace{-0.1em} X}^{-1}(\interior X^g)}$.

\begin{prop}\label{prop:supp}
Let $X$ be a $\G$-boundary and $\pi\in\Rep(\G)$ such that there exists a $\G$-map $\rho\colon C(X)\to B(H_\pi)$. Given a $\G$-map $\psi\colon C^*_\pi(\G)\to C(\partial_F\G)$, we have that $$\supp\psi(\pi(g))\subset\Delta_g$$ for every $g\in\G$.
\end{prop}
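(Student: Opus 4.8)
The plan is to pull the question back to the Furstenberg boundary, where we can take advantage of the fact that $\psi(\pi(g))$ is a continuous function on $\partial_F\G$ and exploit equivariance under the full group. Fix $g\in\G$ and write $u=\pi(g)$, $h=\psi(u)\in C(\partial_F\G)$. The key point is that $h$ is a \emph{contraction} in $C(\partial_F\G)$ (since $\psi$ is ucp and $u$ is unitary, $\|h\|\le 1$), and I want to show $\{|h|=1\}$, or rather $\supp h=\overline{\{h\neq 0\}}$, is contained in $\Delta_g=\overline{\b_X^{-1}(\interior X^g)}$. Equivalently, I will show that $h$ vanishes on the open set $\b_X^{-1}(\interior X^g)^{\mathsf c}\cap(\text{points where }g\text{ "moves things"})$; more precisely it suffices to show $h(\omega)=0$ whenever $\b_X(\omega)\notin\overline{\interior X^g}$, and then take closures.

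The mechanism I expect to use is the ``local contraction'' trick from \cite{BKKO}/\cite{Ken15}: if $\b_X(\omega_0)=:x_0\notin \overline{\interior X^g}$, then I can find an open neighborhood $U\ni x_0$ in $X$ with $gU\cap U$ small — in fact, since $x_0\notin X^g$ would be the generic situation but we only know $x_0\notin\interior X^g$, I should instead argue as follows. Pick a probability measure supported near $x_0$, use strong proximality to push a point mass onto a neighborhood of $x_0$, and use the $\G$-map $\rho\colon C(X)\to B(H_\pi)$ together with $\psi$ to build a boundary map $C(X)\to C(\partial_F\G)$; by rigidity of $C(\partial_F\G)$ this composite map is forced to be the canonical embedding $f\mapsto f\circ\b_X$. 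This rigidity is the crucial constraint: it tells us $\psi(\rho(f))=f\circ\b_X$ for all $f\in C(X)$. Now use the covariance relation $\rho(gf)=\pi(g)\rho(f)\pi(g)^*$ inside $B(H_\pi)$; applying $\psi$ and the multiplicativity of $\psi$ on the image of $\rho$ (Choi multiplicative domain, since $\psi\circ\rho$ is a $*$-homomorphism), one gets, for $f\in C(X)$ in the multiplicative domain of $\psi$,
\[
(gf)\circ\b_X \;=\; \psi\big(\pi(g)\rho(f)\pi(g)^*\big),
\]
and a comparison of supports — expanding $\psi(\pi(g)\rho(f)\pi(g)^*)$ and estimating using the Schwarz inequality $\psi(\pi(g)\rho(f)\rho(f)^*\pi(g)^*)\le \psi(\pi(g)\rho(f)\rho(f)^*\pi(g)^*)$ won't be multiplicative, so I need to be careful: instead compute $\psi(a\rho(f))=\psi(a)\,\psi(\rho(f))$ whenever $\rho(f)$ lies in the multiplicative domain, for any $a\in C^*_\pi(\G)$. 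Taking $a=\pi(g)$ and $a=\pi(g)\rho(f)$ appropriately will force $h=\psi(\pi(g))$ to satisfy $h\cdot(f\circ\b_X) = h\cdot((gf)\circ\b_X)$ for all $f\in C(X)$, i.e. $h$ is supported where $f\circ\b_X=(gf)\circ\b_X$ for all $f$, i.e. where $\b_X=g^{-1}\b_X$, i.e. on $\b_X^{-1}(X^g)$. Upgrading $X^g$ to $\overline{\interior X^g}$ then requires the openness/continuity input — this is where $\psi(\pi(g))$ being \emph{continuous} on $\partial_F\G$ enters: a continuous function supported on $\b_X^{-1}(X^g)$ must be supported on the closure of the interior of that set relative to the image, and $\b_X^{-1}(\interior X^g)\subset\interior(\b_X^{-1}(X^g))$ gives $\supp h\subset \Delta_g$.

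The main obstacle, I expect, will be the last upgrade from $\b_X^{-1}(X^g)$ to $\Delta_g=\overline{\b_X^{-1}(\interior X^g)}$: a priori $X^g$ could have nonempty interior only on a proper piece, and one needs that a continuous function cannot be supported on the "thin" part $X^g\setminus\overline{\interior X^g}$ after pulling back along $\b_X$. I would handle this by showing $\b_X^{-1}(X^g\setminus\overline{\interior X^g})$ has empty interior in $\partial_F\G$ — for this, note $X^g\setminus\interior X^g$ is closed with empty interior in $X$, hence nowhere dense, and $\b_X$ is a continuous surjection between minimal $\G$-spaces, so by Lemma~\ref{lem:minint} (applied in the contrapositive: preimages of nowhere dense closed sets are nowhere dense, since a set with nonempty-interior preimage would map onto a set of nonempty interior) the preimage is nowhere dense in $\partial_F\G$; a continuous function vanishing off a nowhere dense set vanishes identically on it too by density of the complement, so $\supp h$ meets $\b_X^{-1}(X^g)$ only inside $\b_X^{-1}(\overline{\interior X^g})\subset\Delta_g$. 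Assembling: $\supp\psi(\pi(g))\subset \b_X^{-1}(\overline{\interior X^g})\subset\overline{\b_X^{-1}(\interior X^g)}=\Delta_g$, using continuity of $\b_X$ for the first containment and that $\b_X$ is a closed map (being continuous from a compact space to a Hausdorff space, with $\b_X^{-1}$ of a closed set being closed). This completes the argument.
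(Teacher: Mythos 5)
Your route is genuinely different from the paper's, and its core mechanism is sound. The paper argues by contradiction at a single point: if $\psi(\pi(g))(y)\neq 0$ for some $y\notin\Delta_g$, it takes a clopen neighborhood $U$ of $y$ disjoint from $\b_X^{-1}(\interior X^g)$ on which $\psi(\pi(g))$ is nonvanishing, uses Lemma~\ref{lem:minint} to produce $u\in U$ with $\b_X(u)\notin X^g$, and then applies the pointwise Cauchy--Schwarz lemma for states \cite[Lemma 2.2]{HartKal} (if $\phi(a)=1$ and $\phi(uau^*)=0$ for a positive contraction $a$ and unitary $u$, then $\phi(u)=0$) to $\delta_u\circ\tilde{\psi}$. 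You instead work globally: after extending $\psi$ to a $\G$-map $\tilde{\psi}$ on $B(H_\pi)$ --- a step you should state explicitly, since $\rho(C(X))\not\subset C^*_\pi(\G)$ and so ``$\psi\circ\rho$'' does not literally make sense --- rigidity forces $\tilde{\psi}\circ\rho=(\cdot)\circ\b_X$, the Schwarz sandwich (Schwarz for $\rho$ one way, for $\tilde{\psi}$ the other) puts $\rho(C(X))$ in the multiplicative domain of $\tilde{\psi}$, and computing $\tilde{\psi}(\pi(g)\rho(f))=\tilde{\psi}(\rho(gf)\pi(g))$ two ways gives $h\cdot(f\circ\b_X)=((gf)\circ\b_X)\cdot h$ for all $f$, hence $\{h\neq 0\}\subset\b_X^{-1}(X^g)$. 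This is a legitimate alternative that in effect anticipates the multiplicative-domain step of Theorem~\ref{thm:uniquemap-koop}; the paper's pointwise argument avoids the multiplicative domain entirely and is shorter.

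The one step that fails as written is the final containment $\b_X^{-1}(\overline{\interior X^g})\subset\overline{\b_X^{-1}(\interior X^g)}$. Neither continuity nor closedness of $\b_X$ gives this; continuity gives the \emph{reverse} inclusion $\overline{\b_X^{-1}(A)}\subset\b_X^{-1}(\overline{A})$, and for a general continuous surjection points in the fibre over $\partial(\interior X^g)$ need not be approximable from $\b_X^{-1}(\interior X^g)$. This distinction is precisely why the paper defines $\Delta_g$ as $\overline{\b_X^{-1}(\interior X^g)}$ rather than as $\b_X^{-1}(\overline{\interior X^g})$. Fortunately you do not need it: you have already shown that the open set $V:=\{h\neq 0\}$ is contained in $\b_X^{-1}(\interior X^g)\cup\b_X^{-1}(X^g\setminus\interior X^g)$, and that the second set is closed with empty interior by Lemma~\ref{lem:minint} in the contrapositive. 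Hence $V\setminus\b_X^{-1}(X^g\setminus\interior X^g)$ is a dense open subset of $V$ contained in $\b_X^{-1}(\interior X^g)$, so $V\subset\overline{\b_X^{-1}(\interior X^g)}=\Delta_g$, and $\supp h=\overline{V}\subset\Delta_g$ because $\Delta_g$ is closed. With that reassembly of your own ingredients the proof is complete.
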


\begin{proof}
Fix $g\in \G$ and suppose that there is $y\notin \Delta_g$ such that $\psi(\pi(g))(y)\neq 0$. Then there is a clopen neighborhood $U$ of $y$ such that $U\cap \b_{\hspace{-0.1em} X}^{-1}(\interior X^g)=\emptyset$ and 
\begin{equation}\label{eq:piz}
\psi(\pi(g))(z)\neq 0
\end{equation}
 for any $z\in U$.

By Lemma \ref{lem:minint}, $\b_{\hspace{-0.1em} X}(U)$ has non-empty interior. Since $$\b_{\hspace{-0.1em} X}(U)\cap\interior X^g=\emptyset,$$ there is $u\in U$ such that $\b_{\hspace{-0.1em} X}(u)\notin X^g$. Let $f\in C(X)$ such that $0\leq f\leq 1$, $f(\b_{\hspace{-0.1em} X}(u))=1$ and $gf(\b_{\hspace{-0.1em} X}(u))=0$.

Let $\tilde{\psi}\colon B
(H_\pi)\to C(\partial_F\G)$ be a $\G$-map extending $\psi$. Notice that $\tilde{\psi}(\rho(f))(u)=f(\b_{\hspace{-0.1em} X}(u))=1$ and $$\tilde{\psi}(\pi(g)\rho(f)\pi(g)^*)(u)=\tilde{\psi}(\rho(gf))(u)=gf(\b_{\hspace{-0.1em} X}(u))=0.$$

Applying \cite[Lemma 2.2]{HartKal} to the state $\delta_u\circ\tilde{\psi}$ on $B(H_\pi)$, we conclude that $\psi(\pi(g))(u)=0$, which contradicts \eqref{eq:piz}.
\end{proof}

The following consequence is a folklore among the experts, we record the statement for future reference.

\begin{corollary}\label{cor:trace-supp-N}
Let $X$ be a $\G$-boundary and $\pi\in\Rep(\G)$ such that there exists a $\G$-map $\rho\colon C(X)\to B(H_\pi)$. Then every trace on $C^*_{\pi}(\G)$ is supported on $\ker(\G\act X)$.
\end{corollary}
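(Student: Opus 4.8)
The plan is to deduce this from Proposition~\ref{prop:supp} together with the fact that a trace on $C^*_\pi(\G)$ is, in particular, a $\G$-invariant state, and therefore gives rise to a boundary map via $\G$-injectivity of $C(\partial_F\G)$. More precisely, let $\tau$ be a trace on $C^*_\pi(\G)$. Since $\tau$ is $\G$-invariant for the inner action $g\cdot a=\pi(g)a\pi(g)^*$, it is a $\G$-map from $C^*_\pi(\G)$ to $\bC=C(\mathrm{pt})$, and composing with the unique (rigid) $\G$-embedding $\bC\hookrightarrow C(\partial_F\G)$ — equivalently, applying $\G$-injectivity directly — we obtain a boundary map $\psi\colon C^*_\pi(\G)\to C(\partial_F\G)$ with $\delta_y\circ\psi=\tau$ for every $y\in\partial_F\G$ (the composition is the constant function $\tau(a)\one$). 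In particular $\psi(\pi(g))=\tau(\pi(g))\one$ for all $g\in\G$.

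Now I would apply Proposition~\ref{prop:supp}: since $\pi$ admits a $\G$-map $\rho\colon C(X)\to B(H_\pi)$ (the covariant homomorphism for the boundary action, which exists by $\G$-injectivity of $B(H_\pi)$ — or, since a germinal representation is in play, it is part of the data), we get $\supp\psi(\pi(g))\subset\Delta_g$ for every $g\in\G$. But $\psi(\pi(g))$ is the constant function $\tau(\pi(g))\one$, so its support is either all of $\partial_F\G$ or empty. Hence for every $g$ with $\tau(\pi(g))\neq 0$ we must have $\Delta_g=\partial_F\G$, i.e.\ $\overline{\b_{\hspace{-0.1em} X}^{-1}(\interior X^g)}=\partial_F\G$; by minimality of $\partial_F\G$ this forces $\b_{\hspace{-0.1em} X}^{-1}(\interior X^g)$ to be dense, hence (being open and dense, and $\b_X$ surjective) $\interior X^g$ is dense in $X$, and in particular nonempty. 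I then need to upgrade ``$\interior X^g\neq\emptyset$'' to ``$gx=x$ for all $x\in X$'', i.e.\ $g\in\ker(\G\act X)$: this is where minimality and strong proximality (or rather just minimality of the boundary, combined with the structure of $\Delta_g$) must be used — since $\interior X^g$ is a nonempty open $\G_?$... actually the cleanest route is that $\interior X^g$ being dense and $X^g$ being closed gives $X^g=X$, because $\interior X^g\subset X^g$ and $X^g$ closed implies $X=\overline{\interior X^g}\subset X^g$. So $g$ acts trivially on $X$. Thus $\tau(\pi(g))=0$ whenever $g\notin\ker(\G\act X)$, which is exactly the assertion that $\tau$ is supported on $\ker(\G\act X)$.

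The main point to be careful about — and the only place where something could go wrong — is the very first step: verifying that $C^*_\pi(\G)$ genuinely carries a boundary map realizing $\tau$ as a fibrewise constant. This is immediate from $\G$-injectivity of $C(\partial_F\G)$ applied to the $\G$-map $\tau\cdot\one\colon C^*_\pi(\G)\to C(\partial_F\G)$ (which is ucp and $\G$-equivariant since $\tau$ is a $\G$-invariant state), so there is really no obstacle here; one just has to phrase it correctly. Everything else is formal: Proposition~\ref{prop:supp} does the real work, and the passage from $\supp(\text{const})=\partial_F\G$ to $\interior X^g$ dense to $X^g=X$ is elementary topology. I would write the proof in three or four lines invoking Proposition~\ref{prop:supp} directly.

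\begin{proof}
Let $\tau$ be a trace on $C^*_\pi(\G)$. Since $\tau(\pi(g)a\pi(g)^*)=\tau(a)$ for all $g\in\G$ and $a\in C^*_\pi(\G)$, the map $a\mapsto\tau(a)\one$ is a boundary map on $C^*_\pi(\G)$. Denote it by $\psi$. By Proposition~\ref{prop:supp}, $\supp\psi(\pi(g))\subset\Delta_g$ for every $g\in\G$. Fix $g\notin\ker(\G\act X)$. Then $X^g\subsetneq X$ is a proper closed subset, so $\interior X^g\subsetneq X$, and hence $\b_{\hspace{-0.1em} X}^{-1}(\interior X^g)$ is a proper open subset of $\partial_F\G$. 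Since $\partial_F\G$ is minimal, $\Delta_g=\overline{\b_{\hspace{-0.1em} X}^{-1}(\interior X^g)}\neq\partial_F\G$ unless $X^g=X$; indeed $\overline{\interior X^g}\subset X^g\subsetneq X$, so $\b_{\hspace{-0.1em} X}^{-1}(\interior X^g)$ is not dense in $\partial_F\G$, whence $\Delta_g\subsetneq\partial_F\G$. Therefore $\psi(\pi(g))$, being a constant function with support contained in $\Delta_g\subsetneq\partial_F\G$, must vanish, i.e. $\tau(\pi(g))=0$. Thus $\tau$ vanishes on $\pi(g)$ for every $g\notin\ker(\G\act X)$, which means $\tau$ is supported on $\ker(\G\act X)$.
\end{proof}
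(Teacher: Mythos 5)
Your proof is correct and follows essentially the same route as the paper's: view the trace as a boundary map with constant values, apply Proposition~\ref{prop:supp}, and observe that $\Delta_g\subset\b_{\hspace{-0.1em} X}^{-1}(X^g)\subsetneq\partial_F\G$ for $g\notin\ker(\G\act X)$, so the constant function $\tau(\pi(g))\one$ must vanish. (The appeal to minimality in your final write-up is superfluous — surjectivity of $\b_{\hspace{-0.1em} X}$ and closedness of $X^g$ already give $\Delta_g\neq\partial_F\G$ — but the argument is sound.)
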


\begin{proof}
Let $\tau$ be a trace on $C^*_\pi(\G)$. We consider $\tau$ as a $\G$-map from $C^*_{\pi}(\G)$ to $C(\partial_F\G)$ whose image consists of constant functions on $C(\partial_F\G)$. 

Let $g\notin \ker(\G\act X)$. In particular, $\Delta_g \neq \partial_F\G$, which implies by Proposition~\ref{prop:supp} that $\supp \tau(g) \neq \partial_F\G$, hence $\tau(\pi(g))=0$.
\end{proof}

\begin{remark}
Given $\pi\in\Rep(\G)$, let $\cB^\pi$ be the collection of $\G$-boundaries $X$ for which there exists a $\G$-map $\rho\colon C(X)\to B(H_\pi)$. In view of Theorem~\ref{BKKO-tr} and Corollary \ref{cor:trace-supp-N}, the normal subgroup
\[
N_\pi:= \bigcap_{X\in\cB^\pi}\ker(\G\act X)  \le \G
\]
should be considered as the ``$\pi$-amenable radical'' of $\G$. In fact, we have $N_{\la_\G} =\Rad(\G)$. 
Another notion of $\pi$-amenable radical was considered in \cite[Definition 4.6]{BearKal} as the kernel ${\Rad}_\pi(\G)$ of the Furstenberg-Hamana boundary $\mathcal{B}_\pi$ of $\pi$ (\cite[Definition 3.6]{BearKal}). For any $\pi\in\Rep(\G)$, we have ${\Rad}_\pi(\G)\subset N_\pi$. In fact, there is a ${\Rad}_\pi(\G)$-invariant state on $B(H_\pi)$, hence on $C(X)$ for any $\G$-boundary $X$ with a $\G$-map $C(X)\to B(H_\pi)$. Since ${\Rad}_\pi(\G)$ is normal in $\G$, by strong proximality it follows ${\Rad}_\pi(\G)$ acts trivially on $X$.
\end{remark}

The following observations are some simple illustrations of the relevance of these results in structural properties of $C^*$-algebras.

\begin{proposition}\label{prop:nuclear}
Let $X$ be a compact $\G$-space and $\pi\in\Rep(\G)$ such that there exists a $\G$-map $\rho\colon C(X)\to B(H_\pi)$. If $C^*_\pi(\G)$ is nuclear and admits trace, then $X$ admits a $\G$-invariant probability.
\end{proposition}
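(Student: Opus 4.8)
The plan is to manufacture a $\G$-invariant state on $C(X)$ and invoke the Riesz representation theorem: since $X$ is compact, states on $C(X)$ are exactly regular probability measures, and a state is $\G$-invariant precisely when the corresponding measure is. So everything reduces to producing a $\G$-invariant state on $C(X)$, and the only data at hand are the $\G$-map $\rho\colon C(X)\to B(H_\pi)$, the nuclearity of $C^*_\pi(\G)$, and a trace on it. I would build the invariant state as a composition $C(X)\xrightarrow{\ \rho'\ }B(H_\tau)\xrightarrow{\ \phi\ }\bC$ for a suitable GNS Hilbert space $H_\tau$.

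Concretely, fix a trace $\tau$ on $C^*_\pi(\G)$, pass to its GNS triple $(\pi_\tau,H_\tau,\xi_\tau)$, put $\sigma:=\pi_\tau\circ\pi\in\Rep(\G)$ and $M:=\pi_\tau(C^*_\pi(\G))''\subset B(H_\tau)$. The vector state $\bar\tau:=\langle\,\cdot\,\xi_\tau,\xi_\tau\rangle$ is a normal \emph{tracial} state on $M$ (a standard $\sigma$-weak density argument upgrades traciality from $\pi_\tau(C^*_\pi(\G))$ to $M$). Here is where nuclearity is used: a representation of a nuclear $C^*$-algebra generates an injective von Neumann algebra, so $M$ is injective and there is a conditional expectation (ucp projection) $E\colon B(H_\tau)\to M$. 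Since $E$ is an $M$-bimodule map and $\bar\tau$ is a trace, $\phi:=\bar\tau\circ E$ is a state on $B(H_\tau)$ invariant under conjugation by every unitary of $M$, in particular by each $\sigma(g)=\pi_\tau(\pi(g))\in M$; that is, $\phi$ is $\G$-invariant for the action $g\cdot a=\sigma(g)a\sigma(g)^*$. To transport $\rho$, extend the $*$-homomorphism $\pi_\tau\colon C^*_\pi(\G)\to B(H_\tau)$ to a ucp map $\widetilde{\pi_\tau}\colon B(H_\pi)\to B(H_\tau)$ by Arveson's extension theorem; each $\pi(g)$ lies in the multiplicative domain of $\widetilde{\pi_\tau}$, whence $\widetilde{\pi_\tau}(\pi(g)a\pi(g)^*)=\sigma(g)\widetilde{\pi_\tau}(a)\sigma(g)^*$ for all $a\in B(H_\pi)$, so $\rho':=\widetilde{\pi_\tau}\circ\rho\colon C(X)\to B(H_\tau)$ is a $\G$-map for the conjugation action via $\sigma$. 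Then $\phi\circ\rho'$ is a $\G$-invariant state on $C(X)$, and the probability measure it represents is $\G$-invariant.

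The step I would flag as the crux is the use of nuclearity of $C^*_\pi(\G)$ to get injectivity of $M$ (equivalently: $C^*_\pi(\G)^{**}$ is semidiscrete, and $M$ is a corner of it); once the conditional expectation $E$ is in hand the rest is bookkeeping with ucp maps and multiplicative domains. The only other point requiring care is the $\G$-equivariance of $\rho'$, which is exactly the multiplicative-domain identity above — it is essential that one first extends $\pi_\tau$ and then precomposes with $\rho$, rather than trying to push $\rho$ through a quotient of $C^*$-algebras directly. Note also that one does not need faithfulness or finiteness of $M$ beyond having the tracial state $\bar\tau$, which is automatic from the GNS construction for a trace.
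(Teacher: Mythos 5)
Your proof is correct and is essentially the paper's argument written out in full: the paper's proof is a one-liner invoking the well-known fact that every trace on a nuclear $C^*$-algebra is amenable, and your GNS-plus-injectivity construction of the hypertrace $\phi=\bar\tau\circ E$ is exactly the standard proof of that fact. The transport of $\rho$ through the Arveson extension of $\pi_\tau$ via the multiplicative domain is the bookkeeping the paper leaves implicit, and you handle it correctly.
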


\begin{proof}
This follows from the well-known fact that any trace on a nuclear $C^*$-algebra is amenable. 
\end{proof}

\begin{corollary}
Let $\G$ be a group and $\La\in\Sub(\G)$ such that $C^*_{\la_{\sfrac{\G}{\La}}}\!(\G)$ is nuclear. Then $C^*_{\la_{\sfrac{\G}{\La}}}\!(\G)$ admits a trace iff $1_\G\prec\la_{\G/\La}$.
\end{corollary}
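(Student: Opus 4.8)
The claim is that for a subgroup $\La \leq \G$ with $C^*_{\la_{\G/\La}}(\G)$ nuclear, this $C^*$-algebra admits a trace iff $1_\G \prec \la_{\G/\La}$ (i.e. $\La$ is co-amenable in $\G$). The plan is to prove the two implications separately. For the nontrivial direction, suppose $C^*_{\la_{\G/\La}}(\G)$ admits a trace. I want to apply Proposition~\ref{prop:nuclear} with $X = \partial_F\G$ and $\pi = \la_{\G/\La}$. To do this I first need a $\G$-map $\rho\colon C(\partial_F\G) \to B(H_\pi)$ — but this is automatic: $C(\partial_F\G)$ is $\G$-injective, so the identity map $C(\partial_F\G) \to C(\partial_F\G)$ extends along the unital inclusion $\bC \hookrightarrow B(H_\pi)$, or more simply, one can take the composition of any $\G$-map $B(H_\pi) \to C(\partial_F\G)$ (which exists by $\G$-injectivity applied to $\bC \hookrightarrow B(H_\pi)$) — actually the cleanest route is that $C(\partial_F\G)$ being $\G$-injective and $\bC \subset B(H_\pi)$ being a completely isometric $\G$-map, the $\G$-map $\bC \to C(\partial_F\G)$ extends to a $\G$-map $B(H_\pi) \to C(\partial_F\G)$; composing in the other direction we instead want a map \emph{into} $B(H_\pi)$. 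Since $\partial_F\G$ is a $\G$-boundary and $B(H_\pi)$ is a unital $\G$-$C^*$-algebra, $\G$-injectivity of $C(\partial_F\G)$ does not directly give a map into $B(H_\pi)$; instead one uses that $\pr(\partial_F\G)$ has a $\G$-fixed point iff... no. Let me reconsider: the correct observation is simply that since $C^*_{\la_{\G/\La}}(\G)$ is nuclear, any trace on it is amenable, and then one argues directly that amenability of the trace forces a $\G$-invariant mean on $\ell^\infty(\G/\La)$, i.e. $1_\G \prec \la_{\G/\La}$.

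**Carrying out the main direction.** So here is the sharper plan. Assume $C^*_{\la_{\G/\La}}(\G)$ is nuclear and admits a trace $\tau$. Nuclearity implies $\tau$ is amenable, meaning there is a state $\varphi$ on $B(\ell^2(\G/\La))$ extending $\tau$ that is invariant under the inner $\G$-action $g \cdot a = \la_{\G/\La}(g)\, a\, \la_{\G/\La}(g)^{-1}$. Now restrict $\varphi$ to the subalgebra $\ell^\infty(\G/\La) \subset B(\ell^2(\G/\La))$ acting by multiplication operators. This restriction is a state on $\ell^\infty(\G/\La)$; since conjugation by $\la_{\G/\La}(g)$ sends the multiplication operator $M_f$ to $M_{g \cdot f}$ (the translated function), $\G$-invariance of $\varphi$ yields $\G$-invariance of the restricted state. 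Hence $\ell^\infty(\G/\La)$ carries a $\G$-invariant state, which is precisely the statement $1_\G \prec \la_{\G/\La}$ (equivalently $\La$ co-amenable in $\G$), by the characterization recalled in the Amenability subsection of the preliminaries.

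**The easy direction and conclusion.** For the converse: if $1_\G \prec \la_{\G/\La}$, then the trivial representation is weakly contained in $\la_{\G/\La}$, so there is a $*$-homomorphism $C^*_{\la_{\G/\La}}(\G) \to C^*_{1_\G}(\G) = \bC$, which is a ($1$-dimensional, hence tracial) state; composing, $C^*_{\la_{\G/\La}}(\G)$ admits a trace. Note this direction does not use nuclearity at all. Combining the two implications gives the corollary.

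**Main obstacle.** The only subtle point is the first direction, and specifically the invocation that a trace on a nuclear $C^*$-algebra is amenable (admits a $\G$-invariant, indeed hyperstate, extension to $B(H)$ that is invariant under the inner action). This is the standard fact underlying Proposition~\ref{prop:nuclear} — amenability of $C^*_r(\G)$-type algebras via nuclearity, due to the CPAP giving an approximate factorization through matrix algebras, from which one builds the invariant extension by a weak-$*$ limit argument. Once that fact is in hand (and the excerpt already uses it in Proposition~\ref{prop:nuclear}), the rest is a routine restriction-of-states argument, identifying $M_f \mapsto M_{g \cdot f}$ under conjugation. I would present the proof as a two-line deduction citing Proposition~\ref{prop:nuclear}'s proof technique plus the amenability characterization, rather than re-deriving amenability of nuclear traces.

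\begin{proof}
If $1_\G\prec\la_{\G/\La}$, then there is a $*$-homomorphism $C^*_{\la_{\G/\La}}(\G)\to C^*_{1_\G}(\G)=\bC$; this is a one-dimensional, hence tracial, state on $C^*_{\la_{\G/\La}}(\G)$.

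Conversely, suppose $C^*_{\la_{\G/\La}}(\G)$ is nuclear and admits a trace $\tau$. Since $\tau$ is a trace on a nuclear $C^*$-algebra, it is amenable; that is, there is a state $\varphi$ on $B(\ell^2(\G/\La))$ extending $\tau$ which is invariant under the inner $\G$-action $g\cdot a=\la_{\G/\La}(g)\,a\,\la_{\G/\La}(g)^{-1}$. Restrict $\varphi$ to the multiplication subalgebra $\ell^\infty(\G/\La)\subset B(\ell^2(\G/\La))$. For $f\in\ell^\infty(\G/\La)$ and $g\in\G$ one has $\la_{\G/\La}(g)\,M_f\,\la_{\G/\La}(g)^{-1}=M_{g\cdot f}$, so $\G$-invariance of $\varphi$ forces the restricted state on $\ell^\infty(\G/\La)$ to be $\G$-invariant. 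Hence there is a $\G$-invariant state on $\ell^\infty(\G/\La)$, i.e. $1_\G\prec\la_{\G/\La}$.
\end{proof}
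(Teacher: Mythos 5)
Your proof is correct and follows essentially the same route as the paper: the paper simply cites Proposition~\ref{prop:nuclear} applied to $X=\beta(\G/\La)$ (so that $C(X)=\ell^\infty(\G/\La)$ with the multiplication $\G$-map into $B(\ell^2(\G/\La))$), which is exactly the amenable-trace argument you spell out, combined with the stated equivalence between co-amenability and the existence of a $\G$-invariant state on $\ell^\infty(\G/\La)$. The meandering in your first paragraph about maps into versus out of $B(H_\pi)$ is harmless since you correctly discard it, and your final proof block is clean.
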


\begin{proof}
The assertion follows immediately by applying Proposition \ref{prop:nuclear} to the Stone-\v{C}ech compactification $X=\beta(\G/\La)$.
\end{proof}

In the remaining of the section, we propose an abstract language to provide a general framework where the above techniques can be applied. We also take into consideration the connection to the subgroup structure of $\G$ following the ideas explored in \cite{Ken15} and \cite{BKKO}.

\begin{defn}
By an \emph{action class of $\G$} we mean a collection $\cC$ of compact $\G$-spaces (e.g. all faithful or topologically free $\G$-boundaries). 

A unitary representation $\pi\in \Rep(\G)$ is said to be a $\cC$-representation, where $\cC$ is an action class of $\G$, if there is a $\G$-map from $C(X)$ to $B(\cH_\pi)$ for some $X\in\cC$. We denote by $\Rep_\cC(\G)$ the collection of all $\cC$-representations of $\G$. 
\end{defn}

\begin{example}
Let $X\in \cC$, and let $\nu$ be a quasi-invariant $\sigma$-finite measure on $X$. Then the Koopman representation $\k_\nu$ of $\G$ on $L^2(X, \nu)$ is a $\cC$-representation.
\end{example}

\begin{prop}\label{prop:C-Reps}
If $\pi\in \Rep_\cC(\G)$ then $\sigma\in \Rep_\cC(\G)$ for every $\sigma\prec\pi$.
\end{prop}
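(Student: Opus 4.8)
The plan is to reduce the statement to the existence of a single $\G$-equivariant ucp map $B(\cH_\pi)\to B(\cH_\sigma)$. Indeed, suppose $X\in\cC$ and $\rho\colon C(X)\to B(\cH_\pi)$ is a $\G$-map witnessing $\pi\in\Rep_\cC(\G)$. If I can produce \emph{any} $\G$-equivariant ucp map $\Phi\colon B(\cH_\pi)\to B(\cH_\sigma)$, then $\Phi\circ\rho\colon C(X)\to B(\cH_\sigma)$ is again a $\G$-map, and the same $X\in\cC$ witnesses $\sigma\in\Rep_\cC(\G)$. So everything comes down to the following claim: if $\sigma\prec\pi$, then there is a $\G$-equivariant ucp map $\Phi\colon B(\cH_\pi)\to B(\cH_\sigma)$, and one can moreover arrange $\Phi(\pi(g))=\sigma(g)$ for all $g$ (the latter is not needed, but it makes the statement natural, as $\Phi$ then genuinely extends the $*$-homomorphism $C^*_\pi(\G)\to C^*_\sigma(\G)$ supplied by $\sigma\prec\pi$).

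To prove the claim I would use the reformulation of weak containment via approximation of matrix coefficients (Fell's criterion): since $\sigma\prec\pi$, for every finite $F\subseteq\G$, every $\varepsilon>0$, and every finite tuple $\xi_1,\dots,\xi_m\in\cH_\sigma$, there are $n\in\N$ and vectors $\eta_1,\dots,\eta_m\in\cH_\pi^{\oplus n}$ with $\bigl|\langle\sigma(g)\xi_i,\xi_j\rangle-\langle(\pi^{\oplus n})(g)\eta_i,\eta_j\rangle\bigr|<\varepsilon$ and $\bigl|\langle\xi_i,\xi_j\rangle-\langle\eta_i,\eta_j\rangle\bigr|<\varepsilon$ for all $i,j$ and $g\in F$. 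Fixing a cofinal ultrafilter $\mathcal U$ on the directed set of such data $\alpha=(F,\varepsilon,\xi_1,\dots,\xi_m)$, the assignments $\xi_i\mapsto\eta_i$ are isometries on finite-dimensional subspaces which, in the ultralimit, assemble into a $\G$-equivariant isometric isomorphism of $\cH_\sigma$ onto a $\G$-invariant closed subspace $\cL$ of the ultrapower Hilbert space $\cH:=\bigl(\cH_\pi\otimes\ell^2\bigr)_{\mathcal U}$, where $\G$ acts via $(\pi\otimes 1)_{\mathcal U}$. (Equivalently, one can take a point-weak-$*$ limit of the compression maps $T\mapsto W_\alpha^*(T\otimes 1)W_\alpha$ and avoid explicit Hilbert-space ultrapowers altogether.)

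With $\cL\subseteq\cH$ in hand, $\Phi$ is obtained by composing the natural unital, completely positive, $\G$-equivariant maps
\[
B(\cH_\pi)\ \xrightarrow{\;T\mapsto T\otimes 1\;}\ B(\cH_\pi\otimes\ell^2)\ \longrightarrow\ \bigl(B(\cH_\pi\otimes\ell^2)\bigr)_{\mathcal U}\ \longrightarrow\ B(\cH)\ \xrightarrow{\;S\mapsto PSP|_{\cL}\;}\ B(\cL)\cong B(\cH_\sigma),
\]
where the second arrow is the constant-sequence inclusion, the third is the canonical $*$-homomorphism into the algebra of bounded operators on the ultrapower Hilbert space, $P$ is the orthogonal projection of $\cH$ onto the $\G$-invariant subspace $\cL$ (so the last arrow is $\G$-equivariant because $P$ commutes with $(\pi\otimes 1)_{\mathcal U}(\G)$), and the final identification comes from $\cL\cong\cH_\sigma$. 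Tracing $\pi(g)$ through this chain yields $\sigma(g)$, confirming the extra clause. Composing $\Phi$ with $\rho$ completes the proof.

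I expect the main obstacle to be precisely the claim, i.e.\ upgrading $\sigma\prec\pi$ — which a priori only yields a $*$-homomorphism $C^*_\pi(\G)\to C^*_\sigma(\G)$ on the subalgebras generated by the group — to a $\G$-equivariant map defined on \emph{all} of $B(\cH_\pi)$. One cannot extend by Arveson's theorem and then average, since $\G$ need not be amenable, and $B(\cH_\sigma)$ is not $\G$-injective in general, so $\G$-injectivity is unavailable; the ultrapower/approximation argument above is exactly what replaces these unavailable tools. The remaining points — that the ultralimit embedding is genuinely isometric and $\G$-equivariant, and that each arrow in the displayed chain is ucp and $\G$-equivariant — are routine verifications.
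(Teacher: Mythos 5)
Your overall reduction is exactly the paper's: produce a $\G$-equivariant ucp map $\Phi\colon B(\cH_\pi)\to B(\cH_\sigma)$ and compose it with the given $\G$-map $C(X)\to B(\cH_\pi)$. Where you diverge is in how $\Phi$ is obtained, and there the paper's route is dramatically shorter: since $\sigma\prec\pi$ there is a unital $*$-homomorphism $C^*_\pi(\G)\to C^*_\sigma(\G)\subset B(\cH_\sigma)$, and by Arveson's extension theorem (injectivity of $B(\cH_\sigma)$) it extends to a ucp map $\varphi\colon B(\cH_\pi)\to B(\cH_\sigma)$. Your stated reason for rejecting this route --- that one would then have to average to restore equivariance, which is unavailable for non-amenable $\G$ --- overlooks the multiplicative domain trick: because $\varphi$ restricts to a $*$-homomorphism on $C^*_\pi(\G)$, the unitaries $\pi(g)$ lie in the multiplicative domain of $\varphi$, hence $\varphi(\pi(g)a\pi(g)^{-1})=\varphi(\pi(g))\varphi(a)\varphi(\pi(g)^{-1})=\sigma(g)\varphi(a)\sigma(g)^{-1}$ for every $a\in B(\cH_\pi)$. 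Equivariance is automatic for inner actions implemented by unitaries of the domain algebra; no averaging and no $\G$-injectivity of $B(\cH_\sigma)$ is needed. Your ultrapower construction does work and in fact proves something sharper (that $\sigma$ is unitarily equivalent to a subrepresentation of an ultrapower of the infinite ampliation of $\pi$), but it rests on a strengthened form of Fell's criterion --- simultaneous approximation of all cross matrix coefficients $\langle\sigma(g)\xi_i,\xi_j\rangle$ of a finite tuple by coefficients of a single $\pi^{\oplus n}$ with one coherent choice of $\eta_1,\dots,\eta_m$ --- which you assert without proof; it is true (apply the usual state-approximation argument to $[a_{ij}]\mapsto\sum_{i,j}\langle\sigma(a_{ij})\xi_j,\xi_i\rangle$ on $M_m(C^*_\sigma(\G))$, viewed as a state pulled back to $M_m(C^*_\pi(\G))$), but it is an extra step your write-up elides. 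In short: your argument is correct, but it replaces a one-line observation with a considerably heavier machine, and the obstacle you name as the reason for doing so is not actually there.
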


\begin{proof}
Let $\pi\in \Rep_\cC(\G)$. So there is a $\G$-map $\psi\colon  C(X)\to B(\cH_\pi)$ for some $X\in\cC$. Suppose $\sigma\prec\pi$. The $*$-homomorphism $C^*_\pi(\G)\to C^*_\sigma(\G)$ extends to a $\G$-map $\varphi\colon  B(\cH_\pi)\to B(\cH_\sigma)$. The composition $\psi\circ \varphi \colon  C(X)\to B(\cH_\sigma)$ is a $\G$-map. Thus, $\sigma\in \Rep_\cC(\G)$.
\end{proof}

Denote by $\cC_{\scriptscriptstyle\tiny{\rm bnd}}^{\scriptscriptstyle\tiny{\rm fth}}$ and $\cC_{\scriptscriptstyle\tiny{\rm bnd}}^{\scriptscriptstyle\tiny{\rm tfr}}$ the action class of all faithful, and all topologically free $\G$-boundaries, respectively.

\begin{corollary}\label{cor:C-Reps}
\begin{enumerate}
\item[(i)]
Let $\pi\in \Rep_{\cC_{\scriptscriptstyle\tiny{\rm bnd}}^{\scriptscriptstyle\tiny{\rm fth}}}(\G)$. Then either $C^*_{\pi}(\G)$ admits no trace, or otherwise $\pi$ weakly contains $\la_\G$ and the canonical trace is the unique trace on $C^*_{\pi}(\G)$.
\item[(ii)]
Let $\pi\in \Rep_{\cC_{\scriptscriptstyle\tiny{\rm bnd}}^{\scriptscriptstyle\tiny{\rm tfr}}}(\G)$. Then the canonical trace is the unique boundary map on $C^*_\pi(\G)$. In particular, $\pi$ weakly contains $\la_\G$, and $C^*_{\pi}(\G)$ has a proper ideal containing all proper ideals of $C^*_{\pi}(\G)$. 
\end{enumerate}
\end{corollary}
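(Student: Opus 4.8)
The plan is to deduce both parts from the machinery already assembled: Proposition \ref{prop:supp} (with its Corollary \ref{cor:trace-supp-N}), Proposition \ref{prop:ideal}, the $\G$-injectivity of $C(\partial_F\G)$, and the trace/weak-containment results of \cite{BKKO} recalled as Theorems \ref{BKKO-tr} and \ref{BKKO-unqtr}. For (i), let $\pi\in\Rep_{\cC_{\scriptscriptstyle\tiny{\rm bnd}}^{\scriptscriptstyle\tiny{\rm fth}}}(\G)$, so there is a $\G$-map $\rho\colon C(X)\to B(\cH_\pi)$ for some faithful $\G$-boundary $X$. Suppose $C^*_\pi(\G)$ admits a trace $\tau$. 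By Corollary \ref{cor:trace-supp-N}, $\tau$ is supported on $\ker(\G\act X)$, which is trivial since $X$ is faithful; hence $\tau(\pi(g))=0$ for every non-trivial $g\in\G$, i.e. $\tau$ agrees with $\delta_e$ on the generators $\pi(\G)$. First I would argue this forces $\la_\G\prec\pi$: the GNS construction of $\tau$ is, after identifying $\pi(g)$-matrix coefficients, exactly the left regular representation, so the canonical trace is defined on $C^*_\pi(\G)$ (equivalently $\pi\succ\la_\G$, as noted in the Preliminaries right after the definition of the canonical trace). Then any trace $\tau'$ on $C^*_\pi(\G)$ likewise vanishes on all non-trivial $\pi(g)$ and therefore coincides with the canonical trace on a dense subalgebra, hence everywhere; this gives uniqueness.

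For (ii), let $\pi\in\Rep_{\cC_{\scriptscriptstyle\tiny{\rm bnd}}^{\scriptscriptstyle\tiny{\rm tfr}}}(\G)$, with a $\G$-map $\rho\colon C(X)\to B(\cH_\pi)$ for a topologically free $\G$-boundary $X$. By $\G$-injectivity of $C(\partial_F\G)$ there exists at least one boundary map $\psi$ on $C^*_\pi(\G)$. For any such $\psi$ and any $g\in\G$, Proposition \ref{prop:supp} gives $\supp\psi(\pi(g))\subset\Delta_g=\overline{\b_X^{-1}(\interior X^g)}$. Since $X$ is topologically free, $\interior X^g=\emptyset$ for every non-trivial $g$, so $\Delta_g=\emptyset$ and hence $\psi(\pi(g))=0$ for all $g\neq e$; of course $\psi(\pi(e))=\one$. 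Thus $\psi$ is determined on the dense $*$-subalgebra $\operatorname{span}\{\pi(g):g\in\G\}$, so it is unique, and it sends each $\pi(g)$ to a constant function — i.e. $\psi$ is (the composite with the unit inclusion of) a state that is the canonical trace. To see it genuinely is a trace, one checks $\psi(\pi(g)\pi(h))=\psi(\pi(gh))=\one_{\{gh=e\}}=\one_{\{hg=e\}}=\psi(\pi(hg))$ on generators and extends by density and continuity; in particular the canonical trace exists, so $\pi\succ\la_\G$. Finally, apply Proposition \ref{prop:ideal}: since $C^*_\pi(\G)$ has a unique boundary map $\psi$, the ideal $I_\psi=\{a:\psi(a^*a)=0\}$ is proper (it omits the unit, as $\psi(\one)=\one\neq 0$) and contains every proper ideal of $C^*_\pi(\G)$.

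The only genuine subtlety — rather than a real obstacle — is the bookkeeping that a boundary map $C^*_\pi(\G)\to C(\partial_F\G)$ whose values on the generators are all scalar multiples of $\one$ must actually be scalar-valued everywhere and be a trace: this uses that $C(\partial_F\G)$ is commutative together with rigidity, so the scalars $\{\psi(\pi(g))\}$ patch to a genuine tracial state rather than merely a positive functional, and that the canonical trace is the unique trace whenever it exists here by the argument of part (i). All the analytic content — the support estimate, $\G$-injectivity, and the consequences for the ideal lattice — is imported directly from the propositions above, so the proof is essentially an assembly.
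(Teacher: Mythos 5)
Your proof is correct and follows essentially the same route as the paper: part (i) is Corollary \ref{cor:trace-supp-N} applied to a faithful boundary (so any trace restricts to $\delta_e$ on $\pi(\G)$, whence $\la_\G\prec\pi$ and uniqueness by density), and part (ii) is Proposition \ref{prop:supp} combined with topological freeness to force $\psi(\pi(g))=0$ for $g\neq e$, followed by Proposition \ref{prop:ideal}. The extra details you supply (the GNS identification giving $\la_\G\prec\pi$ and the traciality check on generators) are exactly the steps the paper leaves implicit.
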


\begin{proof}
(i) If $\pi\in \Rep_{\cC_{\scriptscriptstyle\tiny{\rm bnd}}^{\scriptscriptstyle\tiny{\rm fth}}}(\G)$, then $N_\pi$ is trivial, hence if $C^*_{\pi}(\G)$ admits a trace, it must coincide with the canonical trace by Corollary~\ref{cor:trace-supp-N}, and in particular $\la_\G\prec\pi$.

(ii) The first assertion is an immediate consequence of Proposition~\ref{prop:supp} and the definition of topologically freeness. That $C^*_\pi(\G)$ has a proper ideal containing all proper ideals follows directly from Proposition \ref{prop:ideal}. 
\end{proof}

\begin{defn}
Let $\cC$ be an action class of $\G$. We say $\Lambda\in \Sub(\G)$ is a $\cC$-subgroup if $\lambda_{\G/\Lambda} \in \Rep_\cC(\G)$. We denote by $\Sub_\cC(\G)$ the set of all $\cC$-subgroups of $\G$.
\end{defn}

\begin{prop}\label{prop:C-subgrps}
Let $\cC$ be an action class of $\G$ and $\Lambda\in \Sub(\G)$. Then $\Lambda\in \Sub_\cC(\G)$ if and only if $\Lambda$ fixes a probability on some $X\in \cC$.
\end{prop}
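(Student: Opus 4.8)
The statement to prove is: given an action class $\cC$ of $\G$ and $\La \in \Sub(\G)$, we have $\La \in \Sub_\cC(\G)$ if and only if $\La$ fixes a probability measure on some $X \in \cC$. Recall $\La \in \Sub_\cC(\G)$ means $\la_{\G/\La} \in \Rep_\cC(\G)$, i.e.\ there is a $\G$-map $C(X) \to B(\ell^2(\G/\La))$ for some $X \in \cC$. The plan is to translate both conditions into the existence of a $\La$-invariant state and to use the rigidity/injectivity machinery together with the Poisson map already introduced in the preliminaries.

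For the forward direction, suppose $\la_{\G/\La} \in \Rep_\cC(\G)$, so there is a $\G$-map $\rho \colon C(X) \to B(\ell^2(\G/\La))$ for some $X \in \cC$. Composing with the vector state $a \mapsto \langle a\,\delta_{e\La}, \delta_{e\La}\rangle$ on $B(\ell^2(\G/\La))$ gives a state $\varphi$ on $C(X)$, i.e.\ a probability $\nu \in \pr(X)$. Since $\delta_{e\La}$ is fixed (up to phase) by $\la_{\G/\La}(h)$ for every $h \in \La$, and $\rho$ is $\G$-equivariant, for $h \in \La$ and $f \in C(X)$ one computes $\varphi(hf) = \langle \rho(hf)\delta_{e\La},\delta_{e\La}\rangle = \langle \la_{\G/\La}(h)\rho(f)\la_{\G/\La}(h)^*\delta_{e\La},\delta_{e\La}\rangle = \langle \rho(f)\delta_{e\La},\delta_{e\La}\rangle = \varphi(f)$, so $\nu$ is $\La$-invariant. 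This gives the desired $\La$-fixed probability on $X \in \cC$.

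For the converse, suppose $\nu \in \pr(X)$ is $\La$-invariant for some $X \in \cC$. Then the Poisson map $\cP_\nu \colon C(X) \to \ell^\infty(\G)$ takes values in $\ell^\infty(\G/\La)$, as recalled in the preliminaries, and it is a $\G$-map. Now I would embed $\ell^\infty(\G/\La)$ $\G$-equivariantly (and unitally, completely positively) into $B(\ell^2(\G/\La))$ as the diagonal multiplication operators; this is a $\G$-map since conjugation by $\la_{\G/\La}(g)$ permutes the standard basis according to the $\G$-action on $\G/\La$, which is exactly the action on $\ell^\infty(\G/\La)$. Composing, $C(X) \xrightarrow{\cP_\nu} \ell^\infty(\G/\La) \hookrightarrow B(\ell^2(\G/\La))$ is a $\G$-map, witnessing $\la_{\G/\La} \in \Rep_\cC(\G)$, i.e.\ $\La \in \Sub_\cC(\G)$.

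There is no serious obstacle here; the statement is essentially a repackaging of standard facts about Poisson maps and invariant states. The only point requiring a moment's care is the converse, where one must observe that the diagonal embedding $\ell^\infty(\G/\La) \hookrightarrow B(\ell^2(\G/\La))$ is genuinely $\G$-equivariant for the inner $\G$-action on $B(\ell^2(\G/\La))$ coming from $\la_{\G/\La}$; this is immediate once one writes out the conjugation on basis matrix units. Alternatively, and perhaps more cleanly, one could bypass the explicit embedding and argue via $\G$-injectivity of $B(\ell^2(\G/\La))$-type arguments, but the direct diagonal inclusion is the most transparent route.
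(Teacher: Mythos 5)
Your proof is correct and follows essentially the same route as the paper: for the forward direction the paper also restricts the $\La$-invariant (vector) state on $B(\ell^2(\G/\La))$ along the $\G$-map to get a $\La$-invariant probability on $X$, and for the converse it likewise uses the Poisson map $\cP_\nu\colon C(X)\to\ell^\infty(\G/\La)\subset B(\ell^2(\G/\La))$, with the diagonal inclusion you spell out left implicit. Your write-up just makes the equivariance checks more explicit; there is nothing to add.
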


\begin{proof}
$(\Rightarrow)\colon  $ let $\psi\colon  C(X)\to B(\ell^2(\G/\Lambda))$ be a $\G$-map where $X\in\cC$. Then the restriction of the $\Lambda$-invariant state on $B(\ell^2(\G/\Lambda))$ to $\psi(C(X))$ yields a $\Lambda$-invariant probability on $X$.\\
$(\Leftarrow)\colon  $ let $X\in \cC$ be such that there is a $\Lambda$-invariant $\nu\in\pr(X)$. The Poisson map $\cP_\nu\colon  C(X) \to \ell^\infty(\G/\La)\subset B(\ell^2(\G/\Lambda))$ is the desired $\G$-map.
\end{proof}

\begin{remark}
\begin{enumerate}
\item[(i)]
$\Sub_{\cC_{\scriptscriptstyle\tiny{\rm bnd}}^{\scriptscriptstyle\tiny{\rm tfr}}}(\G)$ coincides with the class $\Sub_{wp}(\G)$ of weakly parabolic subgroups in the sense of \cite[Definition 6.1]{BekKal}.

\item[(ii)]
Since $\lambda_{\G/\Lambda} \sim_u \lambda_{\G/g\Lambda g^{-1}}$ for every $g\in\G$, by Proposition~\ref{prop:C-Reps}, the set $\Sub_\cC(\G)$ is $\G$-invariant for any $\cC$ (cf. \cite[Remark 6.2]{BekKal}).

\item[(iii)]
By results of \cite{KalKen} and \cite{BKKO}, $\Sub_{\cC_{\scriptscriptstyle\tiny{\rm bnd}}^{\scriptscriptstyle\tiny{\rm fth}}}(\G) \neq\emptyset$ iff $\G$ has trivial amenable radical, and $\Sub_{\cC_{\scriptscriptstyle\tiny{\rm bnd}}^{\scriptscriptstyle\tiny{\rm tfr}}}(\G) \neq\emptyset$ iff $\G$ is $C^*$-simple. Once non-empty, these sets contain $\Sub_{am}(\G)$, the space of all amenable subgroups of $\G$.

\item[(iv)]
It follows from Proposition \ref{prop:C-subgrps} that for $\Lambda_1\leq \Lambda_2\leq \G$, with $\Lambda_1$ co-amenable in $\Lambda_2$, if $\Lambda_1\in \Sub_{\cC_{\scriptscriptstyle\tiny{\rm bnd}}^{\scriptscriptstyle\tiny{\rm *}}}(\G)$ then $\Lambda_2\in \Sub_{\cC_{\scriptscriptstyle\tiny{\rm bnd}}^{\scriptscriptstyle\tiny{\rm *}}}(\G)$ for ${\rm * =fth, tfr}$.

For the same reason, given $\Lambda_1,\La_2 \leq \G$ with $\La_1$ co-amenable to $\La_2$ relative to $\G$ in the sense of \cite[7.C]{CapraceMonod}), we have that if $\Lambda_1\in \Sub_{\cC_{\scriptscriptstyle\tiny{\rm bnd}}^{\scriptscriptstyle\tiny{\rm *}}}(\G)$ then $\Lambda_2\in \Sub_{\cC_{\scriptscriptstyle\tiny{\rm bnd}}^{\scriptscriptstyle\tiny{\rm *}}}(\G)$ for ${\rm * =fth, tfr}$.

\item[(v)]
Similar arguments as in the proof of \cite[Theorem 4.3]{Haag15} imply that if $\La\in\Sub_{\cC_{\scriptscriptstyle\tiny{\rm bnd}}^{\scriptscriptstyle\tiny{\rm fth}}}(\G)$, then $0\in\overline{\mathrm{conv}}\{\la_{\G/\La}(sgs^{-1}):s\in\G\}$ for all $g\in\G\setminus\{e\}$.
\end{enumerate}

\end{remark}

Since $\Sub_{\cC_{\scriptscriptstyle\tiny{\rm bnd}}^{\scriptscriptstyle\tiny{\rm tfr}}}(\G) \subset \Sub_{\cC_{\scriptscriptstyle\tiny{\rm bnd}}^{\scriptscriptstyle\tiny{\rm fth}}}(\G)$, the following generalizes \cite[Proposition 6.5]{BekKal}, and the proof is exactly the same.

\begin{prop}\label{Prop-Sub_g}
$\Sub_{\cC_{\scriptscriptstyle\tiny{\rm bnd}}^{\scriptscriptstyle\tiny{\rm fth}}}(\G)$ contains no non-trivial normal subgroup of $\G$.
\end{prop}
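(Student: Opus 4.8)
The statement to prove is Proposition~\ref{Prop-Sub_g}: $\Sub_{\cC_{\scriptscriptstyle\tiny{\rm bnd}}^{\scriptscriptstyle\tiny{\rm fth}}}(\G)$ contains no non-trivial normal subgroup of $\G$. The plan is to argue by contradiction: suppose $N \unlhd \G$ is non-trivial and $N \in \Sub_{\cC_{\scriptscriptstyle\tiny{\rm bnd}}^{\scriptscriptstyle\tiny{\rm fth}}}(\G)$. By Proposition~\ref{prop:C-subgrps}, $N$ fixes a probability measure $\nu \in \pr(X)$ for some faithful $\G$-boundary $X$. The first step is to exploit normality of $N$ together with strong proximality to upgrade ``$N$ fixes $\nu$'' to ``$N$ fixes a point of $X$'', and in fact to ``$N$ acts trivially on $X$''.

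Here is how I would carry out that upgrade. Since $X$ is a $\G$-boundary, strong proximality gives a net $(g_i) \subset \G$ with $g_i \nu \to \delta_x$ weak* for some $x \in X$. For each $h \in N$ and each $i$, normality gives $g_i^{-1} h g_i \in N$, so $(g_i^{-1} h g_i)\nu = \nu$, hence $h(g_i \nu) = g_i\big((g_i^{-1} h g_i)\nu\big) = g_i\nu$. Passing to the limit, $h\delta_x = \delta_x$, i.e. $hx = x$ for every $h \in N$; so $N \subset \G_x$. But this holds after replacing $x$ by any point in $\overline{\G x}$ obtained as such a weak* limit, and in particular — since $X$ is minimal, so $\overline{\G x} = X$ — I want to conclude $N$ fixes every point of $X$. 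The cleanest way: for arbitrary $y \in X$ pick $(k_j)\subset\G$ with $k_j x \to y$; for $h \in N$, $h(k_j x) = k_j\big((k_j^{-1} h k_j)x\big) = k_j x$ (using $k_j^{-1}hk_j \in N \subset \G_x$), and letting $j\to\infty$ gives $hy = y$. Thus $N \subset \ker(\G \act X)$.

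The final step is immediate: $X$ is a \emph{faithful} $\G$-boundary, so $\ker(\G \act X) = \{e\}$, whence $N = \{e\}$, contradicting non-triviality. This completes the proof. I expect the only mildly delicate point to be the measure-to-point passage — making sure the weak* convergence $g_i\nu \to \delta_x$ is combined correctly with the $N$-invariance, using that $\delta_x$ is an extreme point of $\pr(X)$ and that the map $\mu \mapsto h\mu$ is weak*-continuous — but this is standard and short. No boundary-map machinery is needed here; the argument is purely dynamical, exactly as in the proof of \cite[Proposition 6.5]{BekKal} referenced just before the statement.
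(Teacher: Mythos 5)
Your proof is correct and is essentially the paper's own argument: the paper gives no separate proof but defers to \cite[Proposition 6.5]{BekKal}, whose proof is exactly this combination of Proposition~\ref{prop:C-subgrps}, the normality-plus-strong-proximality upgrade from ``$N$ fixes a measure'' to ``$N$ acts trivially on $X$'', and faithfulness of the boundary. All the individual steps (weak*-continuity of $\mu\mapsto h\mu$, the conjugation trick, and the minimality argument to pass from one fixed point to all of $X$) check out.
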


In \cite[Corollary 6.8]{BekKal} it was proved that $\Sub_{\cC_{\scriptscriptstyle\tiny{\rm bnd}}^{\scriptscriptstyle\tiny{\rm tfr}}}(\G)$ contains no recurrent subgroups, generalizing Kennedy's characterization of $C^*$-simplicity \cite[Theorem 1.1]{Ken15}.

The similar characterization for unique trace property is the following: $\G$ has unique trace property iff $\G$ has no non-trivial amenable IRS (\cite[Corollary 1.5]{BDL} and \cite[Theorem 1.3]{BKKO}).
Thus, the following question is natural:

\begin{question}
Is every IRS with support contained in $\Sub_{\cC_{\scriptscriptstyle\tiny{\rm bnd}}^{\scriptscriptstyle\tiny{\rm fth}}}(\G)$ trivial?
\end{question}

Given $\pi\in \Rep(\G)$, let $\cW(\pi):=\{H\leq\G:\la_{\G/H}\prec\pi\}$. Notice that $\cW(\pi)$ is a closed $\G$-invariant subset of $\Sub(\G)$. For example, $\cW(\{\lambda_\G\})=\Sub_{\rm am}(\G)$, the set of all amenable subgroups of $\G$ and, in general, $\cW(\la_{\G/\La})$ contains all subgroups $L\leq\G$ such that $\La\leq L$ and $\La$ is co-amenable in $L$ (Proposition \ref{prop:co-am}).

\begin{theorem}\label{thm:irs}
Let $\pi\in \Rep(\G)$. Then any IRS supported on $\cW(\pi)$ is supported on $N_\pi$.
\end{theorem}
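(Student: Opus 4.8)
The plan is to reduce the statement to Corollary~\ref{cor:trace-supp-N} by extracting, from an IRS $\mu$ supported on $\cW(\pi)$, a trace on $C^*_\pi(\G)$ whose value on $\pi(g)$ is exactly $\mu(\{H:g\in H\})$. This is the natural generalization of the (well-known) passage from an amenable IRS to an amenable trace.

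Concretely, given an IRS $\mu$ with $\mu(\cW(\pi))=1$, note that for each $H\in\cW(\pi)$ the weak containment $\la_{\G/H}\prec\pi$ yields a $*$-homomorphism $q_H\colon C^*_\pi(\G)\to C^*_{\la_{\G/H}}(\G)\subseteq B(\ell^2(\G/H))$ with $q_H(\pi(g))=\la_{\G/H}(g)$; composing with the vector state at $\delta_{eH}$ gives a state $\phi_H$ on $C^*_\pi(\G)$ with $\phi_H(\pi(g))=\langle\la_{\G/H}(g)\delta_{eH},\delta_{eH}\rangle=\one[g\in H]$. First I would check that $H\mapsto\phi_H(a)$ is continuous on $\cW(\pi)$ — it is continuous for $a\in\C[\G]$, and the general case follows by uniform approximation since $|\phi_H(a)-\phi_H(b)|\le\|q_H(a-b)\|\le\|a-b\|$ — so that $\tau_\mu:=\int_{\Sub(\G)}\phi_H\,d\mu(H)$ is a well-defined state on $C^*_\pi(\G)$ satisfying $\tau_\mu(\pi(g))=\mu(\{H:g\in H\})$. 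The key point is that $\tau_\mu$ is a trace: writing $hg=g^{-1}(gh)g$ one has $hg\in H\iff gh\in gHg^{-1}$, and conjugation-invariance of $\mu$ then turns this identity into $\tau_\mu(\pi(h)\pi(g))=\tau_\mu(\pi(g)\pi(h))$, which by density of $\C[\G]$ gives traciality on all of $C^*_\pi(\G)$.

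With $\tau_\mu$ in hand, fix any $X\in\cB^\pi$, i.e.\ a $\G$-boundary admitting a $\G$-map $C(X)\to B(H_\pi)$ (such $X$ exist, e.g.\ the one-point boundary). By Corollary~\ref{cor:trace-supp-N} the trace $\tau_\mu$ is supported on $\ker(\G\act X)$, so $\mu(\{H:g\in H\})=\tau_\mu(\pi(g))=0$ whenever $g\notin\ker(\G\act X)$; letting $X$ range over $\cB^\pi$ gives $\mu(\{H:g\in H\})=0$ for every $g\in\G\setminus N_\pi$. To conclude that $\mu$ is supported on $\Sub(N_\pi)$ I would write $\Sub(\G)\setminus\Sub(N_\pi)=\bigcup_{g\notin N_\pi}\{H:g\in H\}$ as a union of clopen $\mu$-null sets; this set is open, so inner regularity of $\mu$ shows every compact subset is covered by finitely many of these sets and is therefore null, whence the whole set is null.

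I expect the only genuinely delicate step to be the construction and traciality of $\tau_\mu$: one must make sure the $\mu$-integral of the states $\phi_H$ is legitimate, and recognize that $\G$-invariance of the IRS is precisely what upgrades the resulting state to a trace. Everything afterward is a direct application of Corollary~\ref{cor:trace-supp-N} together with a routine regularity argument (the latter only needed when $\G$ is uncountable).
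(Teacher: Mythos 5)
Your proof is correct and takes essentially the same route as the paper: both arguments produce a trace on $C^*_\pi(\G)$ whose value at $\pi(g)$ is $\mu(\{H:g\in H\})$ and then invoke Corollary~\ref{cor:trace-supp-N}. The only difference is packaging — the paper realizes this trace as the GNS state of the positive definite function $g\mapsto\eta(\{L:g\in L\})$ and gets weak containment in $\pi$ from $\eta\in\overline{\mathrm{conv}}^{w^*}\{\delta_H:H\in\cW(\pi)\}$, whereas you integrate the vector states $\phi_H$ directly and verify traciality by hand; your version is a bit more self-contained and also spells out the final regularity step that the paper leaves implicit.
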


\begin{proof}
Let $\eta$ be an IRS supported on $\cW(\pi)$. Let $\varphi_\eta$ be the positive definite function on $\G$ given by $\varphi_\eta(g)=\eta(\{L:g\in L\})$ for $g\in\G$, and $\la_\eta$ be the GNS representation associated to $\varphi_\eta$ (see e.g. \cite[Lemma 2.3]{HartKal} for a proof that $\varphi_\eta$ is positive definite). 

Since $\lambda_{\G/H}\prec\pi$ for all $H\in  \cW(\pi)$, and $\eta\in\overline{\mathrm{conv}}^{w^*}\{\delta_H:H\in\cW(\pi)\}$, we conclude that $\la_\eta\prec\pi$. Since $\eta$ is an IRS, the state on $C^*_{\la_\eta}(\G)$ associated to $\varphi_\eta$ is a trace. Finally, Corollary \ref{cor:trace-supp-N} implies that $\eta$ is supported on $N_\pi$.
\end{proof}

\begin{corollary}
Given $\La\in \Sub_{\cC_{\scriptscriptstyle\tiny{\rm bnd}}^{\scriptscriptstyle\tiny{\rm fth}}}(\G)$, any IRS supported on $\cW(\la_{\G/\La})$ is trivial.
\end{corollary}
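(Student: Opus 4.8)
The plan is to combine Theorem~\ref{thm:irs} with Proposition~\ref{Prop-Sub_g}, observing that membership $\La\in\Sub_{\cC_{\scriptscriptstyle\tiny{\rm bnd}}^{\scriptscriptstyle\tiny{\rm fth}}}(\G)$ forces $N_{\la_{\G/\La}}$ to be the trivial subgroup. Indeed, by definition $\La\in\Sub_{\cC_{\scriptscriptstyle\tiny{\rm bnd}}^{\scriptscriptstyle\tiny{\rm fth}}}(\G)$ means $\la_{\G/\La}\in\Rep_{\cC_{\scriptscriptstyle\tiny{\rm bnd}}^{\scriptscriptstyle\tiny{\rm fth}}}(\G)$, i.e.\ there is a $\G$-map $C(X)\to B(\ell^2(\G/\La))$ for some \emph{faithful} $\G$-boundary $X$. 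Hence the family $\cB^{\la_{\G/\La}}$ of $\G$-boundaries admitting a $\G$-map into $B(\ell^2(\G/\La))$ contains a faithful one, so
\[
N_{\la_{\G/\La}}=\bigcap_{Y\in\cB^{\la_{\G/\La}}}\ker(\G\act Y)\subseteq\ker(\G\act X)=\{e\},
\]
the last equality being precisely faithfulness of $\G\act X$.

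With that in hand, let $\eta$ be any IRS on $\G$ supported on $\cW(\la_{\G/\La})$. Theorem~\ref{thm:irs} (applied with $\pi=\la_{\G/\La}$) tells us $\eta$ is supported on $N_{\la_{\G/\La}}$, which we have just seen equals $\{e\}$. An IRS supported on the trivial subgroup is the point mass $\delta_{\{e\}}$, which is exactly the trivial IRS. Therefore $\eta$ is trivial, as claimed.

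I do not expect any genuine obstacle here: the corollary is a one-line deduction from the two results cited. The only point worth stating carefully is the identification $N_{\la_{\G/\La}}=\{e\}$, i.e.\ unwinding the definitions of $\Rep_\cC(\G)$, $\Sub_\cC(\G)$, $\cB^\pi$, and $N_\pi$ to see that a faithful boundary in $\cC_{\scriptscriptstyle\tiny{\rm bnd}}^{\scriptscriptstyle\tiny{\rm fth}}$ witnessing $\la_{\G/\La}\in\Rep_{\cC_{\scriptscriptstyle\tiny{\rm bnd}}^{\scriptscriptstyle\tiny{\rm fth}}}(\G)$ also lies in $\cB^{\la_{\G/\La}}$. (Alternatively, one can bypass $N_\pi$ entirely: Proposition~\ref{Prop-Sub_g} says $\Sub_{\cC_{\scriptscriptstyle\tiny{\rm bnd}}^{\scriptscriptstyle\tiny{\rm fth}}}(\G)$ contains no non-trivial normal subgroup, and the support of any IRS generates a normal subgroup, so combining with Theorem~\ref{thm:irs} applied to $\pi=\la_{\G/\La}$ gives triviality once one knows $N_{\la_{\G/\La}}\in\Sub_{\cC_{\scriptscriptstyle\tiny{\rm bnd}}^{\scriptscriptstyle\tiny{\rm fth}}}(\G)$ — but the direct argument above is cleaner.) Everything else is immediate.
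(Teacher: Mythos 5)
Your proof is correct and is exactly the intended argument: the paper states this corollary without proof as an immediate consequence of Theorem~\ref{thm:irs}, the point being precisely your observation that a faithful boundary witnessing $\La\in \Sub_{\cC_{\scriptscriptstyle\tiny{\rm bnd}}^{\scriptscriptstyle\tiny{\rm fth}}}(\G)$ lies in $\cB^{\la_{\G/\La}}$ and forces $N_{\la_{\G/\La}}=\{e\}$. Nothing further is needed.
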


\begin{remark}
We recall again that $\cW(\pi)$ is a compact $\G$-space, and we observed in the proof of Theorem \ref{thm:irs} that if $\cW(\pi)$ admits a $\G$-invariant probability, then $C^*_{\pi}(\G)$ admits a trace.
The converse to this is not true: take any unital $C^*$-algebra $A$ admitting a trace, and choose a group $\G$ of unitaries of $A$ that generates $A$ and contains -$1_A$. This gives a unitary representation of $\G$ which admits a trace, and such that none of the traces that it admits come from an IRS.
\end{remark}

We end the section with an observation which shows how the boundary techniques reviewed in this section can be used in the context of a different type of rigidity problem.

Denote by $\Rep_{{\rm II}_1}(\G)$ the collection of all representations $\pi$ of $\G$ such that $\pi(\G)''\subset B(H_\pi)$ is a ${\rm II}_1$-factor. Recall that $\G$ is said to be \emph{operator algebraic superrigid} if for any $\pi\in \Rep_{{\rm II}_1}(\G)$ the map $\pi(g)\mapsto \lambda_\G(g)$ extends to a von Neumann isomorphism $\pi(\G)''\cong {\rm L}\G$.
\begin{proposition}
A non-amenable group $\G$ is operator algebraic superrigid iff $$\Rep_{{\rm II}_1}(\G) \subset \Rep_{\cC_{\scriptscriptstyle\tiny{\rm bnd}}^{\scriptscriptstyle\tiny{\rm fth}}}(\G).$$ 
\end{proposition}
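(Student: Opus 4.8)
The plan is to prove both implications of the equivalence, using the characterization of unique trace property from Theorem~\ref{BKKO-unqtr} (the action $\G\act\partial_F\G$ is faithful iff the canonical trace is the unique trace on $C^*_{\la_\G}(\G)$) together with Corollary~\ref{cor:trace-supp-N} and the superrigidity hypothesis. Recall that operator algebraic superrigidity of $\G$ means: for every $\pi\in\Rep_{{\rm II}_1}(\G)$, the map $\pi(g)\mapsto\la_\G(g)$ extends to a von Neumann isomorphism $\pi(\G)''\cong{\rm L}\G$.

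For the direction $(\Leftarrow)$, suppose $\Rep_{{\rm II}_1}(\G)\subset\Rep_{\cC_{\scriptscriptstyle\tiny{\rm bnd}}^{\scriptscriptstyle\tiny{\rm fth}}}(\G)$ and let $\pi\in\Rep_{{\rm II}_1}(\G)$. Since $\pi(\G)''$ is a ${\rm II}_1$-factor, it carries a (unique) faithful normal tracial state $\tau$; its restriction to $C^*_\pi(\G)$ is a trace. By hypothesis $\pi$ is a $\cC_{\scriptscriptstyle\tiny{\rm bnd}}^{\scriptscriptstyle\tiny{\rm fth}}$-representation, so by Corollary~\ref{cor:C-Reps}.(i) we conclude $\la_\G\prec\pi$ and the canonical trace is the unique trace on $C^*_\pi(\G)$; in particular $\tau|_{C^*_\pi(\G)}$ is the canonical trace, i.e. $\tau(\pi(g))=0$ for all $g\neq e$. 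The surjection $C^*_\pi(\G)\to C^*_{\la_\G}(\G)$, $\pi(g)\mapsto\la_\G(g)$, is then trace-preserving (both sides carry the canonical trace, with $\delta_e(\la_\G(g))=\tau(\pi(g))$ for all $g$), and since the canonical trace on $C^*_{\la_\G}(\G)$ is faithful, the GNS construction for $\tau$ shows this map extends to the von Neumann isomorphism $\pi(\G)''\cong{\rm L}\G$. Hence $\G$ is operator algebraic superrigid. (One also needs non-amenability of $\G$ so that ${\rm L}\G$ is genuinely a ${\rm II}_1$-factor; this is given.)

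For the converse $(\Rightarrow)$, suppose $\G$ is operator algebraic superrigid and let $\pi\in\Rep_{{\rm II}_1}(\G)$; we must show $\pi\in\Rep_{\cC_{\scriptscriptstyle\tiny{\rm bnd}}^{\scriptscriptstyle\tiny{\rm fth}}}(\G)$, i.e. that there is a $\G$-map $C(X)\to B(H_\pi)$ for some faithful $\G$-boundary $X$. By superrigidity, $\pi(\G)''\cong{\rm L}\G$ as von Neumann algebras via $\pi(g)\mapsto\la_\G(g)$; in particular the unique trace $\tau$ on $\pi(\G)''$ pulls back the canonical trace, so $\tau(\pi(g))=0=\delta_e(\la_\G(g))$ for $g\neq e$, and the map $\pi(g)\mapsto\la_\G(g)$ is isometric on the linear span, giving $C^*_\pi(\G)\cong C^*_{\la_\G}(\G)$. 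Since $\G$ is non-amenable, ${\rm L}\G$ being a factor forces $\G$ to have trivial amenable radical (a non-trivial amenable normal subgroup would give a non-trivial central/hyperfinite piece obstructing factoriality — more precisely, ${\rm L}(\Rad\G)$ would be a non-trivial amenable normal subalgebra, contradicting that ${\rm L}\G\cong\pi(\G)''$ is a factor together with superrigidity being applied to, say, $\la_\G$ itself; alternatively invoke that operator algebraic superrigidity of a group forces triviality of the amenable radical). By Theorem~\ref{Theo-KK}/Theorem~\ref{BKKO-unqtr} and the remark that $\Rad(\G)$ is the kernel of $\G\act\partial_F\G$, triviality of $\Rad(\G)$ means $\partial_F\G$ is itself a faithful $\G$-boundary. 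It remains to produce a $\G$-map $C(\partial_F\G)\to B(H_\pi)$: compose the canonical trace on $C^*_\pi(\G)$ (which exists since $\la_\G\prec\pi$, as $C^*_\pi(\G)\cong C^*_{\la_\G}(\G)$), extended by $\G$-injectivity of... — no, more simply, the canonical trace $\tau\colon C^*_\pi(\G)\to\C\subset C(\partial_F\G)$ is a $\G$-map into the constants, but we need a map *into* $B(H_\pi)$, so instead take: by $\G$-injectivity of $C(\partial_F\G)$ there is a $\G$-map $B(H_\pi)\to C(\partial_F\G)$; dualizing is not available, so the correct move is to note we want $X\in\cC_{\scriptscriptstyle\tiny{\rm bnd}}^{\scriptscriptstyle\tiny{\rm fth}}$ with a $\G$-map $C(X)\to B(H_\pi)$, and by Proposition~\ref{prop:C-subgrps}-type reasoning it suffices to find a $\G$-invariant state on $B(H_\pi)$ — but $\pi(\G)''\cong{\rm L}\G$ with $\G$ non-amenable admits no $\G$-invariant state unless... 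Here is where care is needed: the trivial subgroup $\{e\}$ is a $\cC_{\scriptscriptstyle\tiny{\rm bnd}}^{\scriptscriptstyle\tiny{\rm fth}}$-subgroup exactly when $\Rad(\G)=\{e\}$ (Remark following Proposition~\ref{prop:C-subgrps}), i.e. $\la_\G\in\Rep_{\cC_{\scriptscriptstyle\tiny{\rm bnd}}^{\scriptscriptstyle\tiny{\rm fth}}}(\G)$; and since $C^*_\pi(\G)\cong C^*_{\la_\G}(\G)$ with $\la_\G\prec\pi$ via this iso, Proposition~\ref{prop:C-Reps} applied to $\la_\G\prec\pi$ would give the reverse containment — so instead we use that $\la_\G\prec\pi$ and $\pi\prec\la_\G$ (from the $C^*$-isomorphism) to get $\pi\sim_u\la_\G$ as far as weak containment goes, whence $\pi\in\Rep_{\cC_{\scriptscriptstyle\tiny{\rm bnd}}^{\scriptscriptstyle\tiny{\rm fth}}}(\G)$ by Proposition~\ref{prop:C-Reps}.

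I expect the main obstacle to be this last point in the $(\Rightarrow)$ direction: extracting from the abstract isomorphism $\pi(\G)''\cong{\rm L}\G$ the precise weak-containment statement $\la_\G\prec\pi$ (equivalently $C^*_\pi(\G)$ maps onto $C^*_{\la_\G}(\G)$) and simultaneously $\pi\prec\la_\G$, and then recognizing via the Remark after Proposition~\ref{prop:C-subgrps} that $\la_\G\in\Rep_{\cC_{\scriptscriptstyle\tiny{\rm bnd}}^{\scriptscriptstyle\tiny{\rm fth}}}(\G)$ precisely because superrigidity forces $\Rad(\G)=\{e\}$. Once $\la_\G\prec\pi$ is established, Proposition~\ref{prop:C-Reps} finishes the proof cleanly. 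The rest of the argument is routine bookkeeping with traces and the already-quoted Corollaries~\ref{cor:trace-supp-N} and~\ref{cor:C-Reps}.
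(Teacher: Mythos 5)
Your backward implication is essentially the paper's argument: restrict the trace of the ${\rm II}_1$-factor $\pi(\G)''$ to $C^*_\pi(\G)$, invoke Corollary~\ref{cor:C-Reps}(i) to see it is the canonical trace, and conclude via GNS that $\pi(\G)''\cong{\rm L}\G$. That part is fine. The second half of your forward implication is also the paper's: the von Neumann isomorphism restricts to a $C^*$-isomorphism, so $\pi\prec\la_\G$, and once $\la_\G\in\Rep_{\cC_{\scriptscriptstyle\tiny{\rm bnd}}^{\scriptscriptstyle\tiny{\rm fth}}}(\G)$ is known, Proposition~\ref{prop:C-Reps} finishes. (Your detour through $\G$-invariant states and $\Sub_\cC(\G)$ is unnecessary, but you do land on the right closing move.)

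The genuine gap is the step where you deduce that $\Rad(\G)$ is trivial. Your primary argument — ``${\rm L}\G$ being a factor forces $\G$ to have trivial amenable radical'' — is false: factoriality of ${\rm L}\G$ is equivalent to $\G$ being ICC, and an ICC non-amenable group can have non-trivial amenable radical (e.g.\ $F_2\times L$ with $L$ the lamplighter group, which is ICC and amenable; here ${\rm L}(F_2\times L)$ is a ${\rm II}_1$-factor while $\Rad(F_2\times L)=L$). Likewise ``${\rm L}(\Rad\G)$ obstructs factoriality'' fails for the same reason, and ``apply superrigidity to $\la_\G$ itself'' is not available since $\la_\G$ need not lie in $\Rep_{{\rm II}_1}(\G)$. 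Your fallback, ``invoke that superrigidity forces triviality of the amenable radical,'' is precisely the assertion that needs proof. The paper's argument is: every non-amenable group admits a ${\rm II}_1$-factorial representation, so if $N\lhd\G$ were non-trivial with $\G/N$ non-amenable, pulling back such a representation of $\G/N$ would give $\pi\in\Rep_{{\rm II}_1}(\G)$ killing $N$, for which $\pi(g)\mapsto\la_\G(g)$ cannot extend to an isomorphism onto ${\rm L}\G$ — contradicting superrigidity. Hence every non-trivial normal subgroup is co-amenable; in particular, if $\Rad(\G)$ were non-trivial then $\G$ would be amenable-by-amenable, hence amenable, a contradiction. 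So $\Rad(\G)=\{e\}$, $\partial_F\G$ is a faithful boundary by Furman's theorem, and $\la_\G\in\Rep_{\cC_{\scriptscriptstyle\tiny{\rm bnd}}^{\scriptscriptstyle\tiny{\rm fth}}}(\G)$ via the Poisson map. You should replace your factoriality reasoning with this (or an equivalent) argument.
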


\begin{proof}
$(\Leftarrow) $ Let $\pi\in\Rep_{{\rm II}_1}(\G)$. By part (i) of Corollary \ref{cor:C-Reps}, we have that the trace on $\pi(\G)''$ is the canonical one. 
Clearly, this implies that $\pi(\G)''$ is canonically isomorphic to ${\rm L}\G$.\\
$(\Rightarrow)  $ Since every non-amenable group has a ${\rm II}_1$-factorial representation, it follows $\G$ is just-non-amenable, i.e. all non-trivial normal subgroups are co-amenable. This implies that the amenable radical of $\G$ is trivial, hence $\G\act \partial_F\G$ is faithful by \cite{Furm}. In particular, $\la_\G\in\Rep_{\cC_{\scriptscriptstyle\tiny{\rm bnd}}^{\scriptscriptstyle\tiny{\rm fth}}}(\G)$.
Let $\pi\in\Rep_{{\rm II}_1}(\G)$. Then $\lambda_g\mapsto \pi(g)$ extends to a $C^*$-isomorphism $C^*_{\lambda_\G}(\G)\to C^*_\pi(\G)$. It follows from Proposition \ref{prop:C-Reps} that $\pi\in\Rep_{\cC_{\scriptscriptstyle\tiny{\rm bnd}}^{\scriptscriptstyle\tiny{\rm fth}}}(\G)$.
\end{proof}

\section{Uniqueness of boundary maps}\label{section: qrb}

In this section, given a group $\G$, we study a class of representations $\pi\in\Rep(\G)$ for which $C^*_\pi(\G)$ admits a unique boundary map. We are particularly interested in the case of quasi-regular representations.

\begin{definition}\label{grprep}
Let $X$ be a compact $\G$-space. A \emph{germinal representation} of $(\G,X)$ is a nondegenerate covariant representation $(\pi,\rho)$ of $(\G,X)$ such that 
\begin{equation}\label{eq:cond}
\pi(g)\rho(f)=\rho(f),\quad \forall g\in\G, ~ f\in C(X) \text{ with } \supp f\subset  X^g .
\end{equation}
In this case, we also say that $\pi$ is a germinal representation of $\G$ (relative to $X$).
\end{definition}

\begin{remark}
Let $X$ be a compact $\G$-space. Given $g\in \G$, notice that the set $\{f\in C(X):\supp f\subset\interior X^g\}$ is dense in the set $\{f\in C(X):\supp f\subset X^g\}$. Therefore, a nondegenerate covariant representation $(\pi,\rho)$ of $(\G,X)$ is a germinal representation iff 
$$\pi(g)\rho(f)=\rho(f),\quad \forall g\in\G, ~ f\in C(X) \text{ with } \supp f\subset \interior X^g .$$ 
\end{remark}

\begin{proposition}\label{prop:grpoi-reps-exam}
Let $X$ be a compact $\G$-space. The following are germinal representations of $(\G,X)$:

\begin{enumerate}
\item[(i)] The pair $(\k_\nu,\rho)$, where $\nu$ is a $\sigma$-finite quasi-invariant measure on $X$, $\k_\nu$ is the Koopman representation of $\G$ on $L^2(X, \nu)$ and $\rho\colon C(X)\to B(L^2(X,\nu))$ is the representation by multiplication operators. 
\item[(ii)] The pair $(\la_{\G/H},\cP_x)$, where $x\in X$, $H\in\Sub(\G)$ is such that $\G_x^0\leq H\leq \G_x$, and $\cP_x\colon C(X)\to B(\ell^2(\G/H))$ is the Poisson map.
\end{enumerate}

\end{proposition}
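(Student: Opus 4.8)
The plan is to verify the two assertions separately. In each case, recall that a nondegenerate covariant representation of $(\G,X)$ is just a unitary representation $\pi$ together with a unital $\G$-equivariant $*$-homomorphism $\rho\colon C(X)\to B(\cH_\pi)$, where equivariance is with respect to conjugation by $\pi$ --- which is precisely the covariance relation, so there is nothing extra to check once $\rho$ is known to be an equivariant unital $*$-homomorphism. Thus the work breaks into (a) checking that $\rho$ (respectively $\cP_x$) is such a map, and (b) checking the germinal identity \eqref{eq:cond}. For step (b) I would use the Remark preceding the statement and only verify $\pi(g)\rho(f) = \rho(f)$ when $\supp f\subset\interior X^g$; the point of this reduction is that then the whole support of $f$ lies inside the open set $\interior X^g$, on which $g$ acts trivially.

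\emph{Case (i).} Here $\rho$ is visibly a unital $*$-homomorphism, and its equivariance, $\k_\nu(g)\rho(f)\k_\nu(g)^* = \rho(gf)$, is a direct consequence of the Radon--Nikodym cocycle identity $\tfrac{dg\nu}{d\nu}(y)\,\tfrac{dg^{-1}\nu}{d\nu}(g^{-1}y) = 1$ (the same identity is what makes $\k_\nu$ unitary). For \eqref{eq:cond} with $\supp f\subset\interior X^g$, it is enough to show $\k_\nu(g)(f\xi) = f\xi$ in $L^2(X,\nu)$ for every $\xi$. Evaluating the left-hand side at $y$ gives $\big(\tfrac{dg\nu}{d\nu}(y)\big)^{1/2} f(g^{-1}y)\,\xi(g^{-1}y)$, and I would conclude from two observations: first, since $\interior X^g$ is $g$-invariant and $g$ acts trivially on it, $g\nu$ and $\nu$ agree on Borel subsets of $\interior X^g$, so $\tfrac{dg\nu}{d\nu} = 1$ $\nu$-a.e.\ there; second, if $f(g^{-1}y)\neq 0$ then $g^{-1}y\in\interior X^g$, hence $g^{-1}y = y\in\interior X^g$ and the displayed expression equals $f(y)\xi(y)$, whereas if $f(g^{-1}y) = 0$ the same dichotomy applied at $y$ forces $f(y) = 0$. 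In either case the expression agrees with $(f\xi)(y)$ $\nu$-a.e.

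\emph{Case (ii).} Since $H\leq\G_x$, the point mass $\delta_x$ is $H$-invariant, so $\cP_x$ takes values in $\ell^\infty(\G/H)\subset B(\ell^2(\G/H))$, acting there as multiplication by the function $sH\mapsto f(sx)$; this is plainly a unital $\G$-equivariant $*$-homomorphism. For \eqref{eq:cond} with $\supp f\subset\interior X^g$, it suffices to show $\la_{\G/H}(g)(\hat f\eta) = \hat f\eta$ for all $\eta\in\ell^2(\G/H)$, where $\hat f(sH) := f(sx)$; comparing the two sides at a coset $sH$ reduces to comparing $f(g^{-1}sx)\,\eta(g^{-1}sH)$ with $f(sx)\,\eta(sH)$. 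The crucial observation is that $f(sx)\neq 0$ forces $sx\in\interior X^g$, i.e.\ $g$ fixes an open neighborhood of $sx$; conjugating by $s^{-1}$ shows $s^{-1}gs$ fixes an open neighborhood of $x$, so $s^{-1}gs\in\G_x^0\subseteq H$ --- this is exactly where the hypothesis $\G_x^0\leq H$ is used. Hence $g^{-1}sH = sH$ and, $g$ fixing $sx$, also $g^{-1}sx = sx$, so the two sides agree; the case $f(sx) = 0$ is handled symmetrically, since $f(g^{-1}sx)\neq 0$ would put $g^{-1}sx$ in $\interior X^g$ and, by the same reasoning, force $sx = g^{-1}sx$, contradicting $f(sx) = 0$.

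In both cases the equivariance (equivalently covariance) step is routine bookkeeping; the actual content is the pointwise case analysis for \eqref{eq:cond}, and the one thing to watch is which point-stabilizer data is actually available. In Case (ii) the two-sided hypothesis $\G_x^0\leq H\leq\G_x$ is used twice --- the inclusion $H\leq\G_x$ so that $\cP_x$ descends to a function on $\G/H$, and $\G_x^0\leq H$ so that $sx\in\interior X^g$ genuinely forces $s^{-1}gs\in H$ (rather than merely $s^{-1}gs\in\G_x$). In Case (i) the corresponding subtlety is that the a.e.\ identity $\tfrac{dg\nu}{d\nu}\equiv 1$ on $\interior X^g$ must be extracted from $g$-invariance of that open set, not just from $g$ fixing its points. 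I expect those to be the only places requiring any care.
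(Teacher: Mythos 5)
Your proposal is correct and follows essentially the same route as the paper: a direct pointwise computation using that the Radon--Nikodym derivative $\tfrac{dg\nu}{d\nu}$ is $1$ a.e.\ on the fixed set in case (i), and that $kx\in\interior X^g$ forces $k^{-1}gk\in\G_x^0\leq H$, hence $gkH=kH$, in case (ii). The only cosmetic difference is that the paper verifies (i) directly for $\supp f\subset X^g$ via $\tfrac{dg\nu}{d\nu}\one_{X^g}=\one_{X^g}$, while you invoke the reduction to $\supp f\subset\interior X^g$ in both cases; both are fine.
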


\begin{proof}
(i) This follows from the fact that, given $g\in\G$, $\frac{dg\nu}{d\nu}\one_{X^g}=\one_{X^g}$.

(ii) Fix $g\in\G$ and $f\in C(X)$ with $\supp f\subset X^g$. Given $\delta_{kH}\in\ell^2(\G/H)$, we have
\begin{align*}
\cP_x(f)\delta_{kH}=
\begin{cases}
f(kx)\delta_{kH}, & \text{if $kx\in\interior X^g$}\\
0, & \text{otherwise}.\end{cases}
\end{align*}
On the other hand, given $k\in\G$ such that $kx\in\interior X^g$, we have that $k^{-1}gk\in\G_x^0$, hence $gkH=kH$. This concludes the proof that $\la_{\G/H}(g)\cP_x(f)=\cP_x(f)$.
\end{proof}

We are now ready to prove the main result of this section, the uniqueness of boundary maps on $C^*$-algebras of germinal representations of boundary actions.
\begin{theorem}\label{thm:uniquemap-koop}
Let $X$ be a $\G$-boundary and $(\pi,\rho)$ a germinal representation of $(\G,X)$. Then there is a unique boundary map $\psi$ on $C^*_{\pi\times\rho}(\G,X)$, and $\psi|_{C^*_\pi(\G)}$ is the unique boundary map on $C^*_\pi(\G)$.

Furthermore, $\psi({\pi}(g))=\one_{\Delta_g}$ and ${\psi}(\rho(f))=f\circ\b_{\hspace{-0.1em} X}$  for all $g\in \G$ and $f\in C(X)$.
\end{theorem}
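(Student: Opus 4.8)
The plan is to build the boundary map explicitly and then prove uniqueness. For existence, by $\G$-injectivity of $C(\partial_F\G)$ there certainly is *some* boundary map $\psi$ on $C^*_{\pi\times\rho}(\G,X)$; the content is to pin down its values on the generators $\rho(f)$ and $\pi(g)$. First I would handle $\rho(f)$: since $X$ is a $\G$-boundary, $C(X)$ embeds $\G$-equivariantly into $C(\partial_F\G)$ via $\b_X$, and rigidity says this is the *unique* $\G$-map $C(X)\to C(\partial_F\G)$. Any boundary map $\psi$ restricts on $\rho(C(X))\cong C(X)$ to a $\G$-map into $C(\partial_F\G)$, so $\psi(\rho(f))=f\circ\b_X$ is forced. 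Next, for $\pi(g)$: Proposition~\ref{prop:supp} already gives $\supp\psi(\pi(g))\subset\Delta_g$, which is one half. For the reverse inequality I would use the germinal condition \eqref{eq:cond}: if $\supp f\subset\interior X^g$ then $\pi(g)\rho(f)=\rho(f)$, hence $\psi(\pi(g)\rho(f))=\psi(\rho(f))=f\circ\b_X$. Using a $\G$-map extension $\tilde\psi\colon B(\cH_\pi)\to C(\partial_F\G)$ and a Cauchy–Schwarz/multiplicative-domain argument (e.g. \cite[Lemma 2.2]{HartKal} as in the proof of Proposition~\ref{prop:supp}), $\rho(f)$ lands in the multiplicative domain of $\tilde\psi$ at the relevant points, so $\psi(\pi(g))\cdot(f\circ\b_X)=f\circ\b_X$ on $\b_X^{-1}(\interior X^g)$; letting $f$ exhaust $\interior X^g$ forces $\psi(\pi(g))=1$ on $\b_X^{-1}(\interior X^g)$, and by continuity on its closure $\Delta_g$. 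Combined with Proposition~\ref{prop:supp}, this gives $\psi(\pi(g))=\one_{\Delta_g}$ (noting $\Delta_g$ is clopen, being the closure of a $\b_X$-preimage of an open set in a boundary — here one uses that $\b_X^{-1}(\interior X^g)$ is open with clopen closure, or works with the regularization).

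Once the values on generators are determined, uniqueness is automatic: two boundary maps $\psi_1,\psi_2$ on $C^*_{\pi\times\rho}(\G,X)$ agree on all $\rho(f)\pi(g)$, hence on the dense span $\mathrm{span}\{\rho(f)\pi(g)\}$, hence everywhere. The same determination shows $\psi|_{C^*_\pi(\G)}$ takes the forced values $\psi(\pi(g))=\one_{\Delta_g}$, so any boundary map on $C^*_\pi(\G)$ is determined on $\mathrm{span}\{\pi(g):g\in\G\}$ and thus unique; one should note that a boundary map on $C^*_\pi(\G)$ a priori need not extend to $C^*_{\pi\times\rho}(\G,X)$, but the above argument via Proposition~\ref{prop:supp} plus the germinal identity only ever uses a $\G$-map extension to $B(\cH_\pi)$ (which exists by $\G$-injectivity), not a boundary map on the larger algebra, so the argument applies verbatim to $C^*_\pi(\G)$ alone. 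This also confirms the restriction $\psi|_{C^*_\pi(\G)}$ of the boundary map on $C^*_{\pi\times\rho}(\G,X)$ is *the* boundary map on $C^*_\pi(\G)$.

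The main obstacle I anticipate is the "lower bound" step: showing $\psi(\pi(g))\geq\one_{\b_X^{-1}(\interior X^g)}$, i.e. extracting from $\pi(g)\rho(f)=\rho(f)$ that $\psi(\pi(g))$ is forced to be $1$ on the right set. The subtlety is that $\tilde\psi$ is merely ucp, not multiplicative, so one cannot naively write $\psi(\pi(g)\rho(f))=\psi(\pi(g))\psi(\rho(f))$; one must genuinely invoke the fact that $\rho(f)$, being a "boundary-type" element with $\tilde\psi(\rho(f)^*\rho(f))=\tilde\psi(\rho(f^*f))=|f|^2\circ\b_X=|\tilde\psi(\rho(f))|^2$, lies in the multiplicative domain of $\tilde\psi$, so $\tilde\psi(\pi(g)\rho(f))=\tilde\psi(\pi(g))\,\tilde\psi(\rho(f))=\psi(\pi(g))\cdot(f\circ\b_X)$. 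Equating with $\tilde\psi(\rho(f))=f\circ\b_X$ and running over $f$ supported in $\interior X^g$ (which, via $\b_X$ and Lemma~\ref{lem:minint}, separates enough points) yields $\psi(\pi(g))=1$ on $\b_X^{-1}(\interior X^g)$. A secondary technical point is verifying $\Delta_g$ is clopen so that $\one_{\Delta_g}\in C(\partial_F\G)$ makes sense; this follows since $\interior X^g$ is open, $\b_X$ is continuous and, crucially for a boundary, closures of such preimages are clopen — or one simply notes $\one_{\Delta_g}$ is the unique continuous function forced by "support in $\Delta_g$, equal to $1$ on the interior of $\Delta_g$" after checking $\Delta_g=\overline{\interior\Delta_g}$.
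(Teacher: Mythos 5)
Your proposal is correct and follows essentially the same route as the paper: existence via $\G$-injectivity, rigidity of $C(\partial_F\G)$ to force $\psi(\rho(f))=f\circ\b_X$ (hence $\rho(C(X))$ lies in the multiplicative domain), Proposition~\ref{prop:supp} for $\supp\psi(\pi(g))\subset\Delta_g$, and the germinal identity combined with the multiplicative-domain computation for the reverse bound, after which uniqueness is automatic. The only cosmetic differences are that the paper settles the $C^*_\pi(\G)$ statement by observing that every boundary map on $C^*_\pi(\G)$ \emph{does} extend to one on $C^*_{\pi\times\rho}(\G,X)$ by $\G$-injectivity (so your caveat on that point is unnecessary), and that the clopenness of $\Delta_g$ is simply extremal disconnectedness of $\partial_F\G$.
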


\begin{proof}
By $\G$-injectivity of $C(\partial_F\G)$, there exists a boundary map ${\psi}\colon C^*_{\pi\times\rho}(\G,X)\to C(\partial_F\G)$. Since any boundary map on $C^*_\pi(\G)$ can be extended to a boundary map on $ C^*_{\pi\times\rho}(\G,X)$, uniqueness of $\psi$ would imply uniqueness of its restriction on $C^*_\pi(\G)$.

By rigidity of $C(\partial_F\G)$ in the sense of Section \ref{SS-BoudaryActions}, we have that ${\psi}(\rho(f))=f\circ\b_{\hspace{-0.1em} X}$, for $f\in C(X)$.
 In particular, $\rho(C(X))$ is contained in the multiplicative domain of ${\psi}$ and therefore, ${\psi}$ is uniquely determined by $\psi|_{C^*_\pi(\G)}$. 

Fix $g\in \G$, and we will show that $\psi({\pi}(g))=\one_{\Delta_g}$. By Proposition~\ref{prop:supp}, we only need to show that $\psi({\pi}(g))$ is constant $1$ on $\Delta_g$.

Given $y\in \b_{\hspace{-0.1em} X}^{-1}(\interior X^g)$, take $f\in C(X)$ such that $f(\b_{\hspace{-0.1em} X}(y))=1$, and $\mathrm{supp}f\subset X^g$. 

By \eqref{eq:cond}, we have that $\psi({\pi}(g)){\psi}(\rho(f))={\psi}(\rho(f))$. By applying both sides of this equation to $y$, we conclude that $\psi({\pi}(g))(y)=1$. By continuity, $\psi({\pi}(g))(z)=1$ for any $z\in \Delta_g$.
\end{proof}

We proceed with a list of corollaries of our above results in which we recover most of the main previously proven results on $C^*$-simplicity.

We begin with the original characterization of $C^*$-simplicity from \cite{KalKen}.

\begin{corollary}[{\cite[Theorem 6.2]{KalKen}}]\label{classical}
A group $\G$ is $C^*$-simple iff it has a topologically free boundary.
\end{corollary}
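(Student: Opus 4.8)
The plan is to derive Corollary~\ref{classical} directly from Theorem~\ref{thm:uniquemap-koop} applied to the left regular representation $\la_\G$, viewed as a germinal representation of a suitable boundary action. First I would recall the two directions of $C^*$-simplicity one needs: if $\G$ has a topologically free boundary then $C^*_{\la_\G}(\G)$ is simple, and conversely if $\G$ is $C^*$-simple then the (Furstenberg) boundary action $\G\act\partial_F\G$ is itself topologically free (in fact free, by Theorem~\ref{Theo-KK}), which certainly gives a topologically free boundary. The reverse direction is immediate from Theorem~\ref{Theo-KK} (or just the fact that $\partial_F\G$ is a $\G$-boundary), so the content is the forward direction.

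For the forward direction, suppose $X$ is a topologically free $\G$-boundary. The key observation is that $(\la_\G,\cP_x)$ is a germinal representation of $(\G,X)$ for any $x\in X$: indeed $\la_\G=\la_{\G/\{e\}}=\la_{\G/\G_x^0}$ whenever $\G_x^0=\{e\}$, and topological freeness means precisely that $\interior X^g=\emptyset$ for every $g\neq e$, hence $\G_x^0=\{e\}$ for \emph{every} $x\in X$. So by Proposition~\ref{prop:grpoi-reps-exam}(ii) with $H=\{e\}$, the pair $(\la_\G,\cP_x)$ is a germinal representation of $(\G,X)$. Now apply Theorem~\ref{thm:uniquemap-koop}: there is a unique boundary map $\psi$ on $C^*_{\la_\G}(\G)$, and it satisfies $\psi(\la_\G(g))=\one_{\Delta_g}$. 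Since $X$ is topologically free, $\interior X^g=\emptyset$ and hence $\Delta_g=\overline{\b_X^{-1}(\interior X^g)}=\emptyset$ for every $g\neq e$; thus $\psi(\la_\G(g))=0$ for $g\neq e$ and $\psi(\la_\G(e))=\one_{\partial_F\G}$, i.e. $\psi$ is (the composition with the embedding of constants of) the canonical trace $\delta_e$.

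Finally I would invoke Proposition~\ref{prop:ideal}: since $C^*_{\la_\G}(\G)$ admits a unique boundary map $\psi$, every proper ideal is contained in $I_\psi=\{a:\psi(a^*a)=0\}$. But $\psi$ here is the canonical trace, which is faithful on $C^*_{\la_\G}(\G)$ (recalled in the Preliminaries), so $I_\psi=\{0\}$. Hence $C^*_{\la_\G}(\G)$ has no nonzero proper ideal, i.e. $\G$ is $C^*$-simple. I expect no serious obstacle here — the corollary is essentially a matter of checking that $\la_\G$ fits the germinal-representation framework and then reading off the consequences; the only point requiring a little care is the identification of the unique boundary map with the canonical trace and the appeal to its faithfulness, both of which are already in place in the excerpt.
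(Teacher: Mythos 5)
Your argument for the implication ``topologically free boundary $\Rightarrow$ $C^*$-simple'' is correct and is essentially the paper's: you verify that $(\la_\G,\cP_x)$ is a germinal representation of $(\G,X)$ via Proposition~\ref{prop:grpoi-reps-exam}(ii) (using that topological freeness gives $\G_x^0=\{e\}$), read off from Theorem~\ref{thm:uniquemap-koop} that the unique boundary map sends $\la_\G(g)$ to $\one_{\Delta_g}=0$ for $g\neq e$ and hence is the canonical trace, and conclude simplicity from faithfulness of the canonical trace together with Proposition~\ref{prop:ideal}. (The paper routes this through Corollary~\ref{cor:C-Reps}(ii), which rests on Proposition~\ref{prop:supp}, but the content is the same.)

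The gap is in the other direction. You dispose of ``$\G$ is $C^*$-simple $\Rightarrow$ $\G$ has a topologically free boundary'' by citing Theorem~\ref{Theo-KK}; but Theorem~\ref{Theo-KK} \emph{is} \cite[Theorem 6.2]{KalKen}, i.e.\ the very statement this corollary is meant to recover from the boundary-map machinery, so that appeal is circular in the context of the section (and if it were allowed, the whole corollary would already be the equivalence (i)$\Leftrightarrow$(iii) of Theorem~\ref{Theo-KK} and there would be nothing left to prove). The paper instead derives this direction from Theorem~\ref{thm:uniquemap-koop}: fix $x\in\partial_F\G$; by $\G$-injectivity of $C(\partial_F\G)$ the stabilizer $\G_x$ is amenable, so $\la_{\G/\G_x}\prec\la_\G$, and $C^*$-simplicity forces the resulting surjection to be an isomorphism $C^*_{\la_\G}(\G)\cong\csx$. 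The canonical trace is then a boundary map on $\csx$, hence by uniqueness it is \emph{the} boundary map of Theorem~\ref{thm:uniquemap-koop}, which yields $\one_{\Delta_g}=\psi(\la_{\G/\G_x}(g))=0$ for all $g\neq e$; since $\b_{\partial_F\G}$ is the identity, this says $\interior(\partial_F\G)^g=\emptyset$ for all $g\neq e$, i.e.\ $\partial_F\G$ itself is a topologically free boundary. Some version of this argument is needed to make the corollary a genuine consequence of the results of the paper.
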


\begin{proof}
Suppose $\G$ is $C^*$-simple. Let $x\in \partial_F \G$. By $\G$-injectivity of $C(\partial_F \G)$, $\G_x$ is amenable, hence $\csx = C^*_{\la_{\G}}(\G)$. Thus, the unique boundary map on $\csx$ from Theorem~\ref{thm:uniquemap-koop} coincides with the canonical trace, and in particular $\interior (\partial_F \G)^g$ is empty for every non-trivial $g\in \G$.

Conversely, if $\G$ has a topologically free boundary, then Corollary \ref{cor:C-Reps} applies to $C^*_{\la_{\G}}(\G)$, implying the canonical trace is the unique boundary map,  which is now faithful. Hence $C^*_{\la_{\G}}(\G)$ is simple by Proposition~\ref{prop:ideal}.
\end{proof}

As mentioned in the introduction, the importance of noncommutative boundary maps was first noted by Kennedy, who proved the following special case of Theorem~\ref{thm:uniquemap-koop}. 
\begin{corollary}[{\cite[Theorem 3.4]{Ken15}}]
A group $\G$ is $C^*$-simple iff the canonical trace is the unique boundary map on $C^*_{\la_{\G}}(\G)$.
\end{corollary}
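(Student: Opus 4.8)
The plan is to obtain this corollary directly from Theorem~\ref{thm:uniquemap-koop}, the Kalantar--Kennedy characterization recalled in Theorem~\ref{Theo-KK}, Proposition~\ref{prop:ideal}, and the fact (recorded in the preliminaries) that the canonical trace on $C^*_{\la_\G}(\G)$ is faithful. First I would observe that the statement is meaningful: the canonical trace $\tau$, composed with the embedding of $\bC$ into $C(\partial_F\G)$ as constant functions, is itself a boundary map, being a state (hence ucp) and $\G$-equivariant, since $\tau$ is a trace and the $\G$-action on $C^*_{\la_\G}(\G)$ is by conjugation while $\G$ fixes the constants of $C(\partial_F\G)$. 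So the content is whether $\tau$ is the \emph{only} boundary map.

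For the forward implication, assume $\G$ is $C^*$-simple. By Theorem~\ref{Theo-KK} the action $\G\act\partial_F\G$ is free, so $(\partial_F\G)^g=\emptyset$ for every $g\neq e$, and in particular $\G_x=\G_x^0=\{e\}$ for every $x\in\partial_F\G$. Fixing any $x\in\partial_F\G$, Proposition~\ref{prop:grpoi-reps-exam}(ii) (with $H=\{e\}$) shows that $(\la_\G,\cP_x)$ is a germinal representation of $(\G,\partial_F\G)$; here the germinal condition \eqref{eq:cond} holds trivially, since for $g\neq e$ there is no non-zero $f\in C(X)$ with $\supp f\subset(\partial_F\G)^g=\emptyset$. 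Theorem~\ref{thm:uniquemap-koop} then provides a unique boundary map $\psi$ on $C^*_{\la_\G}(\G)$ with $\psi(\la_\G(g))=\one_{\Delta_g}$. Since the canonical map $\partial_F\G\to\partial_F\G$ is the identity and the action is free, $\Delta_g=\overline{\interior(\partial_F\G)^g}$ equals $\partial_F\G$ when $g=e$ and $\emptyset$ when $g\neq e$; thus $\psi(\la_\G(g))$ is the constant $\delta_e(g)$, so $\psi$ and $\tau$ agree on the generators $\{\la_\G(g):g\in\G\}$, and hence $\psi=\tau$ by linearity and continuity.

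For the converse, assume the canonical trace $\tau$ is the unique boundary map on $C^*_{\la_\G}(\G)$. By Proposition~\ref{prop:ideal}, the ideal $I_\tau=\{a\in C^*_{\la_\G}(\G):\tau(a^*a)=0\}$ contains every proper ideal of $C^*_{\la_\G}(\G)$. Since the canonical trace is faithful, $I_\tau=\{0\}$, so $C^*_{\la_\G}(\G)$ has no non-zero proper ideal, i.e.\ $\G$ is $C^*$-simple.

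I do not anticipate a genuine obstacle here, as all the real work is already packaged into Theorem~\ref{thm:uniquemap-koop}; the only points needing a little care are (a) noting that the canonical trace qualifies as a boundary map, so that ``unique boundary map $=$ canonical trace'' is a sensible assertion, and (b) correctly identifying the germinal representation in the $C^*$-simple case, where freeness of $\G\act\partial_F\G$ makes $(\la_\G,\cP_x)$ germinal automatically.
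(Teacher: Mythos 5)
Your proof is correct, and its skeleton is the same as the paper's: uniqueness comes from Theorem~\ref{thm:uniquemap-koop} applied to a germinal representation over $\partial_F\G$, and the converse is Proposition~\ref{prop:ideal} plus faithfulness of the canonical trace. The one place you diverge is the input used to launch the forward direction. You invoke the full Kalantar--Kennedy freeness theorem (Theorem~\ref{Theo-KK}) so that $(\la_\G,\cP_x)$ is literally the germinal representation $(\la_{\G/\G_x},\cP_x)$ of Proposition~\ref{prop:grpoi-reps-exam}(ii) with $\G_x=\{e\}$, and you then read off $\psi=\tau$ from the explicit formula $\psi(\la_\G(g))=\one_{\Delta_g}$ with $\Delta_g=\emptyset$ for $g\neq e$. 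The paper instead (in Corollary~\ref{classical}, to which its proof of this statement refers) uses only the weaker fact that $\G$-injectivity of $C(\partial_F\G)$ forces the stabilizers $\G_x$, $x\in\partial_F\G$, to be amenable, so that $\csx=C^*_{\la_\G}(\G)$; uniqueness from Theorem~\ref{thm:uniquemap-koop} then identifies the unique boundary map with the canonical trace simply because the latter is a boundary map, with no need to compute $\Delta_g$. Both arguments are valid; the paper's is slightly more economical and better fits the section's theme of re-deriving the classical results from Theorem~\ref{thm:uniquemap-koop} alone, whereas yours imports the deeper freeness characterization from \cite{KalKen} as a black box (which is logically harmless here, since that theorem is quoted as an external result, but it buys you nothing you could not get from amenability of the stabilizers).
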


\begin{proof}
As we saw in the proof of Corollary \ref{classical}, if $\G$ is $C^*$-simple, then Theorem~\ref{thm:uniquemap-koop} implies that the canonical trace is the unique boundary map on $C^*_{\la_{\G}}(\G)$.

The converse is immediate from Proposition~\ref{prop:ideal}.
\end{proof}

The following criterion was key in several (non-)$C^*$-simplicity results, including Le Boudec's example of a non-$C^*$-simple group with unique trace property \cite{LeBou}.
\begin{corollary}[{\cite[Proposition 1.9]{BKKO}}]
Let $\G$ be a $C^*$-simple group. If $X$ is a $\G$-boundary such that the point stabilizer $\G_x$ is amenable for some $x\in X$, then $X$ is topologically free.
\end{corollary}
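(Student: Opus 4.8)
The plan is to use Theorem~\ref{thm:uniquemap-koop} applied to a suitable germinal representation built from the point $x\in X$ with amenable stabilizer, together with the fact that for a $C^*$-simple group the Furstenberg boundary action is free (Theorem~\ref{Theo-KK}).

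\medskip

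\noindent\textbf{Setup.} Let $X$ be a $\G$-boundary with $\G_x$ amenable for some $x\in X$. First I would observe that since $\G_x^0\leq\G_x$ and $\G_x$ is amenable, the pair $(\la_{\G/\G_x},\cP_x)$ is a germinal representation of $(\G,X)$ by Proposition~\ref{prop:grpoi-reps-exam}(ii). By Theorem~\ref{thm:uniquemap-koop}, the $C^*$-algebra $\csx$ admits a unique boundary map $\psi$, and moreover $\psi(\la_{\G/\G_x}(g))=\one_{\Delta_g}$ for every $g\in\G$. On the other hand, since $\G_x$ is amenable, $\la_{\G/\G_x}\prec\la_\G$, so the canonical trace $\tau$ descends to $\csx$; viewing $\tau$ as a boundary map $\csx\to C(\partial_F\G)$ with values in the constant functions, uniqueness forces $\psi=\tau$. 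Hence $\one_{\Delta_g}=\psi(\la_{\G/\G_x}(g))=\tau(\la_{\G/\G_x}(g))$ is a constant function on $\partial_F\G$ for every $g\in\G$.

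\medskip

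\noindent\textbf{Conclusion.} Fix a non-trivial $g\in\G$; I want to show $\interior X^g=\emptyset$. Since $\G$ is $C^*$-simple, Theorem~\ref{Theo-KK} gives that $\G\act\partial_F\G$ is free, so $(\partial_F\G)^g=\emptyset$, and in particular $g\notin\G_y$ for all $y\in\partial_F\G$; it then follows that $\tau(\la_{\G/\G_x}(g))$, being the constant function equal to $\psi(\la_{\G/\G_x}(g))$, must be the constant $0$ (if it were the constant $1$ then $\Delta_g=\partial_F\G$, so $\b_X^{-1}(\interior X^g)$ would be dense in $\partial_F\G$; but then, since $\b_X$ is continuous and $X$ is Hausdorff, one gets $\b_X(\partial_F\G)\subset\overline{\interior X^g}\subset X^g$, forcing $g$ to act trivially on $X$, contradicting $(\partial_F\G)^g=\emptyset$ via freeness and surjectivity of $\b_X$). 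Therefore $\one_{\Delta_g}$ is the zero function, i.e. $\Delta_g=\emptyset$, which by definition of $\Delta_g$ means $\b_X^{-1}(\interior X^g)=\emptyset$, and since $\b_X$ is surjective this gives $\interior X^g=\emptyset$. As $g$ was an arbitrary non-trivial element, $X$ is topologically free.

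\medskip

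\noindent\textbf{Main obstacle.} The only delicate point is the dichotomy ``$\one_{\Delta_g}$ is constant $0$ or constant $1$'' and ruling out the value $1$: this is where one must combine freeness of $\partial_F\G$ with the rigidity/surjectivity of $\b_X$ to see that a non-trivial $g$ cannot fix an open subset of $X$ whose preimage is all of $\partial_F\G$. Everything else is a direct application of Theorem~\ref{thm:uniquemap-koop} and amenability of $\G_x$. (An alternative, perhaps cleaner route to the same contradiction: if $\Delta_g=\partial_F\G$ then $g\in\G_y^0\subset\G_y$ for a dense set of $y\in\partial_F\G$, hence for all $y$ by closedness of $(\partial_F\G)^g$, contradicting freeness directly.)
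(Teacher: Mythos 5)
Your overall strategy is the same as the paper's: apply Theorem~\ref{thm:uniquemap-koop} to the germinal representation $(\la_{\G/\G_x},\cP_x)$ and identify the unique boundary map on $\csx$ with the canonical trace. One step in the setup needs tightening: $\la_{\G/\G_x}\prec\la_\G$ by itself does not let the canonical trace ``descend'' to $\csx$, since a trace on $C^*_{\la_\G}(\G)$ passes to the quotient $\csx$ only if it annihilates the kernel of the surjection $C^*_{\la_\G}(\G)\to\csx$. You must invoke $C^*$-simplicity here: the surjection is then injective, so $\csx=C^*_{\la_\G}(\G)$ and the canonical trace is a boundary map on it. This is exactly what the paper's one-line proof does when it writes $\la_{\G/\G_x}=\la_\G$.

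The more serious problem is your endgame ruling out $\Delta_g=\partial_F\G$. From $\Delta_g=\partial_F\G$ you correctly deduce $X^g=X$, i.e.\ $g\in\ker(\G\act X)$; but this does not contradict freeness of $\G\act\partial_F\G$ ``via surjectivity of $\b_X$'': equivariance only gives $\b_X(gy)=\b_X(y)$, not $gy=y$, so a priori a nontrivial element could act trivially on the quotient $X$ while fixing no point of $\partial_F\G$. The same confusion sinks your parenthetical alternative, since $y\in\b_X^{-1}(\interior X^g)$ says that $g$ lies in the open stabilizer of $\b_X(y)$ \emph{in $X$}, not that $g\in\G_y^0$ in $\partial_F\G$. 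To close the gap, note that $N:=\ker(\G\act X)$ is a normal subgroup contained in the amenable group $\G_x$, hence an amenable normal subgroup, hence trivial because a $C^*$-simple group has trivial amenable radical. Alternatively --- and this is the paper's route --- use the actual value of the canonical trace rather than only its constancy: once $\psi=\tau$ under the identification above, $\one_{\Delta_g}=\tau(\la_\G(g))=\delta_e(g)=0$ for every nontrivial $g$, so $\Delta_g=\emptyset$ outright and no dichotomy needs to be resolved.
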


\begin{proof}
Since $\la_{\G/\G_x} = \la_{\G}$, we conclude that the canonical trace is the unique boundary map on $C^*_{\la_{\G/\G_x}}(\G)$ described in Theorem \ref{thm:uniquemap-koop},  which implies that $\interior X^g$ is empty for every non-trivial $g\in \G$.
\end{proof}

The above simple unified conceptual arguments (see also Corollary \ref{cor:Raum} below), indeed show that the notion of boundary maps is the fundamental tool in this context.

%%%%%%%%

Now, turning our attention towards traces, using Theorem \ref{thm:uniquemap-koop}, we give a complete description of tracial structure of $C^*_{\pi}(\G)$ for germinal representations $\pi$.  In the case that $\pi$ is a Koopman representation, the implications in the next theorem (i)$\impliedby$(ii)$\iff$(iii)$\iff$(iv) follow immediately from \cite[Theorem 31]{Raum}. 

\begin{thm}\label{faithful-trace}
Let $X$ be a $\G$-boundary, let $N={\rm Ker}(\G\act X)$ be the kernel of the action, and let $(\pi,\rho)$ be a germinal representation of $(\G,X)$. Then the following are equivalent:
\begin{itemize}
\item[(i)]
$C^*_{\pi}(\G)$ admits a trace;
\item[(ii)]
the induced action $\displaystyle\frac{\G}{N} \act X$ is topologically free;
\item[(iii)]
if $\sigma\in \Rep(\G)$ and $\sigma\prec \pi$, then $\la_{\G/N}\prec\sigma$.
\item[(iv)]
$\la_{\G/N}\prec\pi$.
\end{itemize}

If the above equivalent conditions hold, then $\one_N$ extends to the unique trace on $C^*_{\pi}(\G)$.
\end{thm}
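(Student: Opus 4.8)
The plan is to run the cycle of implications (i)$\Rightarrow$(ii)$\Rightarrow$(iii)$\Rightarrow$(iv)$\Rightarrow$(i) and then read off the uniqueness of the trace. The two implications carrying real content are (i)$\Rightarrow$(ii) and (ii)$\Rightarrow$(iii); (iii)$\Rightarrow$(iv) is trivial, and (iv)$\Rightarrow$(i) is a one-line weak-containment argument.

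For (i)$\Rightarrow$(ii), the key point is that \emph{a trace on $C^*_\pi(\G)$ is a boundary map}. Indeed, since $\G$ acts on $C^*_\pi(\G)$ by conjugation, the trace identity forces any trace $\tau$ to be $\G$-invariant, and composing with the unital inclusion $\bC\hookrightarrow C(\partial_F\G)$ realizes $\tau$ as a boundary map. By the uniqueness statement in Theorem~\ref{thm:uniquemap-koop}, this boundary map must be the map $\psi$ of that theorem, so for every $g$ the function $\psi(\pi(g))=\one_{\Delta_g}$ is constant; equivalently $\Delta_g$ is either empty or all of $\partial_F\G$. I then transport this dichotomy to $X$ along the surjection $\b_X$: surjectivity of $\b_X$ gives $\Delta_g=\emptyset\iff\interior X^g=\emptyset$, and Lemma~\ref{lem:minint} (applied to $\b_X$, which goes between the minimal spaces $\partial_F\G$ and $X$) gives $\Delta_g=\partial_F\G\iff\interior X^g$ is dense in $X$. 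Since $X^g$ is closed, density of $\interior X^g$ forces $X^g=X$, i.e. $g\in N$. Hence $g\notin N\Rightarrow\interior X^g=\emptyset$; as the $\G/N$-action on $X$ has the same fixed sets ($X^{gN}=X^g$) and as $gN$ is nontrivial precisely when $g\notin N$, this is exactly topological freeness of $\G/N\act X$.

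For (ii)$\Rightarrow$(iii), I first observe that $\pi$, hence any $\sigma\prec\pi$, factors through $\G/N$: for $h\in N$ we have $\supp f\subset X=X^h$ for every $f\in C(X)$, so the germinal identity~\eqref{eq:cond} together with nondegeneracy of $\rho$ gives $\pi(h)=\pi(h)\rho(1)=\rho(1)=1$, and the same then holds for $\sigma$ via the weak-containment $*$-homomorphism. Now $X$ is a topologically free $\G/N$-boundary and $\rho$ descends to a $\G/N$-equivariant ucp map $C(X)\to B(\cH_\pi)$, so the representation $\bar\pi$ of $\G/N$ induced by $\pi$ lies in $\Rep_{\cC^{\mathrm{tfr}}_{\mathrm{bnd}}}(\G/N)$; since $\bar\sigma\prec\bar\pi$, Proposition~\ref{prop:C-Reps} puts $\bar\sigma$ in the same class, and Corollary~\ref{cor:C-Reps}(ii) yields that $\bar\sigma$ weakly contains the left regular representation of $\G/N$, which upon lifting reads $\la_{\G/N}\prec\sigma$ as representations of $\G$.

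The rest is immediate: (iii)$\Rightarrow$(iv) is the case $\sigma=\pi$; and for (iv)$\Rightarrow$(i), weak containment $\la_{\G/N}\prec\pi$ gives a $*$-homomorphism $C^*_\pi(\G)\to C^*_{\la_{\G/N}}(\G)$ onto the reduced $C^*$-algebra of $\G/N$, and pulling back its canonical trace produces a trace $\tau$ on $C^*_\pi(\G)$ with $\tau(\pi(g))=\one_N(g)$, i.e. $\one_N$ extends to a trace. Uniqueness of the trace then comes for free: as in (i)$\Rightarrow$(ii), any trace equals $\psi$ composed with $\bC\hookrightarrow C(\partial_F\G)$, hence is determined by its values on the $\pi(g)$'s, so it must coincide with the extension of $\one_N$ just constructed. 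I expect the only mildly delicate part to be the point-set bookkeeping in (i)$\Rightarrow$(ii) — the two equivalences relating $\Delta_g$, density of $\interior X^g$, and $X^g=X$ — which hinges on Lemma~\ref{lem:minint} and closedness of $X^g$; everything else is a direct assembly of Theorem~\ref{thm:uniquemap-koop}, Proposition~\ref{prop:C-Reps} and Corollary~\ref{cor:C-Reps}.
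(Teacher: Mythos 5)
Your proof is correct, and its overall architecture --- the cycle (i)$\Rightarrow$(ii)$\Rightarrow$(iii)$\Rightarrow$(iv)$\Rightarrow$(i) driven by the uniqueness of the boundary map from Theorem~\ref{thm:uniquemap-koop} --- is the same as the paper's. The implications (i)$\Rightarrow$(ii), (iii)$\Rightarrow$(iv), (iv)$\Rightarrow$(i) and the uniqueness of the trace match the paper's proof essentially verbatim (your $\Delta_g$ bookkeeping in (i)$\Rightarrow$(ii) is just the contrapositive form of the paper's ``$\emptyset\neq\Delta_g\neq\partial_F\G$'' step, and you don't actually need Lemma~\ref{lem:minint} for the direction you use --- surjectivity and continuity of $\b_X$ together with closedness of $X^g$ suffice). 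The one place where you genuinely diverge is (ii)$\Rightarrow$(iii): you first observe, via the germinal identity applied to $f=\one_X$, that $\pi(h)=\rho(\one_X)=1$ for $h\in N$, so that $\pi$ and every $\sigma\prec\pi$ descend to $\G/N$, and then you feed the descended representation into the Section~\ref{sec:gen} machinery (Proposition~\ref{prop:C-Reps} and Corollary~\ref{cor:C-Reps}(ii)) for the topologically free $\G/N$-boundary $X$. The paper instead stays with $\G$ and argues directly on $C^*_\sigma(\G)$: the unique boundary map restricts to $\one_N$ on $\sigma(\G)$, and composing with a point evaluation plus cyclicity of $\delta_N$ in $\ell^2(\G/N)$ gives $\la_{\G/N}\prec\sigma$. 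These are the same mechanism in different packaging --- Corollary~\ref{cor:C-Reps}(ii) is itself proved by the point-evaluation-plus-cyclicity argument --- but your version has the small merits of making explicit that germinal representations kill $N$ (which the paper leaves implicit) and of literally reducing the general case to the faithful, topologically free case already handled in Section~\ref{sec:gen}, at the mild cost of having to check that $X$, $\rho$ and the weak containment all descend to $\G/N$.
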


\begin{proof}
(i)$\!\implies\!$(ii):
Let $\tau$ be a trace on $C^*_{\pi}(\G)$. Then the map $\psi(a):=\tau(a)\one_{\partial_F\G}$ is a boundary map on $C^*_{\pi}(\G)$, which is unique by Theorem \ref{thm:uniquemap-koop}. Assume that $\displaystyle\frac{\G}{N} \act X$ is not topologically free and take $g\in\G$, $g\notin N$ such that $\interior X^g\neq\emptyset$. Then $\emptyset\neq \Delta_g \neq \partial_F\G$, which implies by Theorem \ref{thm:uniquemap-koop} that $\psi(\pi(g))$ is not constant, a contradiction.

(ii)$\!\implies\!$(iii):
For every $g\in N$, $\interior X^g= X$. If $\displaystyle\frac{\G}{N} \act X$ is topologically free, then for every $g\notin N$, $\interior X^g= \emptyset$. Thus, by Theorem \ref{thm:uniquemap-koop}, if $\psi$ is the boundary map on $C^*_{\pi}(\G)$, then its restriction $\psi|_{\pi(\G)}$ to $\pi(\G)$ is $\one_N\cdot \one_{\partial_F\G}$. Suppose $\sigma\in \Rep(\G)$ and $\sigma\prec \pi$. By the uniqueness of $\psi$, $C^*_{\sigma}(\G)$ also admits a unique boundary map $\psi'$, which restricts to $\one_N$ on $\pi(\G)$. Thus, composing $\psi'$ with a delta measure $\delta_z$, $z\in \partial_F\G$, we get a state on $C^*_{\sigma}(\G)$ whose restriction to $\pi(\G)$ is $\one_N$. Since $\delta_N\in \ell^2(\G/N)$ is a cyclic vector for $\la_{\G/N}$, we conclude $\la_{\G/N}\prec\sigma$.

(iii)$\!\implies\!$(iv): This is trivial. 

(iv)$\!\implies\!$(i): Note that $C^*_{\la_{\G/N}}(\G)$ admits a trace, which is the unique extension of $\one_N$. Thus, if $\la_{\G/N}\prec\pi$, then $C^*_{\pi}(\G)$ also admits a trace.
\end{proof}

One direction of the following corollary, namely the fact that topological freeness implies existence of trace, in the case of Koopman representations is known (see \cite{Raum}).

\begin{corollary}\label{cor:faithful-trace}
Let $X$ be a faithful $\G$-boundary, and $(\pi,\rho)$ a germinal representation of $(\G,X)$. Then $C^*_{\pi}(\G)$ admits a trace if and only if $X$ is topologically free. 

If $X$ is topologically free, then ${\pi}\succ\la_\G$ and the canonical trace is the unique trace on $C^*_{\pi}(\G)$.
\end{corollary}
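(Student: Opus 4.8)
The plan is to deduce Corollary~\ref{cor:faithful-trace} directly from Theorem~\ref{faithful-trace} by specializing to the case where the action $\G\act X$ is faithful. First I would observe that faithfulness of $X$ means precisely that $N=\ker(\G\act X)$ is trivial, so $\G/N\cong\G$ and the quasi-regular representation $\la_{\G/N}$ appearing in Theorem~\ref{faithful-trace} becomes the regular representation $\la_\G$. With this identification the equivalence (i)$\iff$(ii) of Theorem~\ref{faithful-trace} reads: $C^*_\pi(\G)$ admits a trace if and only if the induced action $\frac{\G}{\{e\}}\act X = \G\act X$ is topologically free, i.e.\ $X$ is topologically free. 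That is exactly the first assertion of the corollary.

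For the second assertion, suppose $X$ is topologically free. Then condition (iv) of Theorem~\ref{faithful-trace} gives $\la_{\G/N}=\la_\G\prec\pi$, i.e.\ $\pi\succ\la_\G$, which means the canonical trace is defined on $C^*_\pi(\G)$. For uniqueness: by the last line of Theorem~\ref{faithful-trace}, any trace on $C^*_\pi(\G)$ is the extension of $\one_N=\one_{\{e\}}$, i.e.\ it is the functional sending $\pi(g)$ to $0$ for $g\neq e$ and $\pi(e)$ to $1$; this is precisely the canonical trace. Hence the canonical trace is the unique trace on $C^*_\pi(\G)$.

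Since everything is an immediate specialization, there is essentially no obstacle; the only point requiring a word of care is the translation $N=\{e\}$, which follows directly from the definition of a faithful $\G$-space (for every non-trivial $g\in\G$ we have $X^g\neq X$, so no non-trivial $g$ fixes all of $X$). One could alternatively give a self-contained argument using Corollary~\ref{cor:C-Reps}(i): a faithful $\G$-boundary $X$ witnesses $\pi\in\Rep_{\cC_{\scriptscriptstyle\tiny{\rm bnd}}^{\scriptscriptstyle\tiny{\rm fth}}}(\G)$, so either $C^*_\pi(\G)$ has no trace or $\pi\succ\la_\G$ with the canonical trace being the unique trace; it then remains only to characterize which alternative occurs, and Theorem~\ref{faithful-trace} (or Proposition~\ref{prop:supp} applied to a trace viewed as a boundary map with constant image, exactly as in Corollary~\ref{cor:trace-supp-N}) shows the existence of a trace forces $\interior X^g=\emptyset$ for all $g\neq e$. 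But the cleanest route is simply to cite Theorem~\ref{faithful-trace} with $N$ trivial.

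\begin{proof}
Since $X$ is a faithful $\G$-boundary, $N:=\ker(\G\act X)$ is trivial, so $\G/N=\G$ and $\la_{\G/N}=\la_\G$. The equivalence (i)$\iff$(ii) of Theorem~\ref{faithful-trace} then says that $C^*_\pi(\G)$ admits a trace if and only if the action $\G\act X$ itself is topologically free, proving the first claim.

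Assume $X$ is topologically free. By Theorem~\ref{faithful-trace}.(iv), $\la_\G=\la_{\G/N}\prec\pi$, i.e.\ $\pi\succ\la_\G$; in particular the canonical trace is defined on $C^*_\pi(\G)$. By the final assertion of Theorem~\ref{faithful-trace}, every trace on $C^*_\pi(\G)$ is the extension of $\one_N=\one_{\{e\}}$, which is precisely the canonical trace. Hence the canonical trace is the unique trace on $C^*_\pi(\G)$.
\end{proof}
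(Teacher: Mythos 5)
Your proof is correct and follows exactly the route the paper intends: the paper states this corollary without proof as an immediate specialization of Theorem~\ref{faithful-trace}, and your observation that faithfulness gives $N=\ker(\G\act X)=\{e\}$, hence $\la_{\G/N}=\la_\G$ and $\one_N=\delta_e$, is precisely the translation required.
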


\begin{remark}
Recently, the notion of topological boundaries has been generalized to the quantum setting in \cite{KKSV}, 
and applications in $C^*$-simplicity and the unique trace property 
of discrete quantum groups are investigated. 
A major obstacle in the theory of quantum group actions is the lack of an appropriate generalization of (topological) freeness. This, in part, prevents one to smoothly import the ideas and techniques of $C^*$-simplicity from groups to quantum groups. The above corollary, specially the forward implication, offers an interesting alternative to topological freeness for boundary actions, which may be the right notion to consider in the quantum case. In particular, the following conjecture seems natural: a discrete quantum group $\mathbb{\Gamma}$ is $C^*$-simple if the $C^*$-algebra of a quasi-regular or Koopman representation of a faithful boundary of $\mathbb{\Gamma}$ admits a trace.
\end{remark}

As a special case of Theorem \ref{faithful-trace} for $\pi$ the Koopman representations of quasi-invariant measures on $X$, we recover Raum's recent characterization of $C^*$-simplicity in \cite{Raum}.

\begin{corollary}\label{cor:Raum}\cite[Theorem 31]{Raum}
Let $X$ be a $\G$-boundary and $\k$ the Koopman representation associated to some quasi-invariant measure on $X$. Then the following statements are equivalent.

\begin{itemize}
\item[(i)]
$X$ is topologically free;
\item[(ii)]
if $\sigma\in \Rep(\G)$ and $\sigma\prec \k$, then $\la_{\G}\prec\sigma$;
\item[(iii)]
$\la_{\G}\prec\k$.
\end{itemize}
\end{corollary}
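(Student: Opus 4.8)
\textbf{Proof proposal for Corollary~\ref{cor:Raum}.}
The plan is to derive this as a direct special case of Theorem~\ref{faithful-trace} applied to the germinal representation $\pi=\k$, the Koopman representation of a quasi-invariant measure on $X$. The only mismatch with the literal statement of Theorem~\ref{faithful-trace} is the appearance of $\la_{\G/N}$ versus $\la_\G$, where $N=\ker(\G\act X)$; so the first task is to observe that in the Koopman setting we may reduce to $N$ trivial. Indeed, if $g\in N$ then $gx=x$ for all $x\in X$, so $\tfrac{dg\nu}{d\nu}=1$ and $\k(g)\xi(x)=\xi(g^{-1}x)=\xi(x)$, i.e.\ $\k(g)=\id$; thus $\k$ factors through $\G/N$ and $C^*_\k(\G)=C^*_{\bar\k}(\G/N)$, where $\bar\k$ is the Koopman representation of the faithful $\tfrac{\G}{N}$-boundary $X$. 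So I would first replace $\G$ by $\G/N$ to assume the action is faithful, noting that under this replacement $\la_{\G/N}$ becomes $\la_{\G/N/N}=\la_{\G/N}$ in the old notation, i.e.\ the left regular representation of the acting quotient group.

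Next I would invoke Proposition~\ref{prop:grpoi-reps-exam}(i): the pair $(\k_\nu,\rho)$ with $\rho$ the multiplication representation of $C(X)$ on $L^2(X,\nu)$ is a germinal representation of $(\G,X)$. Hence Theorem~\ref{faithful-trace} applies verbatim with $\pi=\k$. With $N$ trivial, condition (ii) of Theorem~\ref{faithful-trace}, ``$\tfrac{\G}{N}\act X$ is topologically free'', becomes ``$X$ is topologically free'', which is (i) of the corollary; condition (iii) of the theorem, ``$\sigma\prec\k$ implies $\la_{\G/N}\prec\sigma$'', becomes (ii) of the corollary; and condition (iv), ``$\la_{\G/N}\prec\k$'', becomes (iii) of the corollary. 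Condition (i) of Theorem~\ref{faithful-trace} (``$C^*_\k(\G)$ admits a trace'') is equivalent to all of these by the theorem, but is not listed in the corollary, so it simply drops out of the statement. The chain (i)$\iff$(ii)$\iff$(iii) of the corollary is then exactly the restriction of the equivalence (i)$\iff$(ii)$\iff$(iii)$\iff$(iv) of Theorem~\ref{faithful-trace} to these three conditions.

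I do not expect a genuine obstacle here; the work is entirely in bookkeeping, namely (a) checking the computation $\k|_N=\id$ so that the reduction to a faithful boundary is legitimate, and (b) matching up the labelled conditions of the theorem and the corollary after that reduction. One small point worth being explicit about: after reducing to the faithful case one should note that ``topologically free'' for the $\G/N$-action on $X$ is the same as asserting $\interior X^g=\emptyset$ for every nontrivial $g\in\G/N$, which is precisely what topological freeness of the $\G$-action on the faithful boundary $X$ says; no information is lost because $N$ acts trivially anyway. With these two verifications in place, the corollary follows immediately, and one may simply write ``This is the special case of Theorem~\ref{faithful-trace} with $\pi=\k$ and $N$ trivial (after passing to $\G/\ker(\G\act X)$), using Proposition~\ref{prop:grpoi-reps-exam}(i).''
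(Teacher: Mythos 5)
Your overall route---specialize Theorem~\ref{faithful-trace} to $\pi=\k$ via Proposition~\ref{prop:grpoi-reps-exam}(i)---is exactly the paper's, but the preliminary reduction to $\G/N$ introduces a genuine gap. After replacing $\G$ by $\G/N$, Theorem~\ref{faithful-trace} hands you the equivalence of ``$\tfrac{\G}{N}\act X$ is topologically free'', ``$\la_{\G/N}\prec\sigma$ for every $\sigma\prec\k$'', and ``$\la_{\G/N}\prec\k$''. When $N\neq\{e\}$ these are \emph{not} the conditions of the corollary: condition (i) of the corollary demands $\interior X^g=\emptyset$ for every nontrivial $g\in\G$, which fails outright for $g\in N\setminus\{e\}$ (there $\interior X^g=X$), and conditions (ii), (iii) involve $\la_\G$, not $\la_{\G/N}$. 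Your sentence ``no information is lost because $N$ acts trivially anyway'' is precisely where the argument breaks. For instance, take $\G=(\Z/2)\times\G_0$ acting through the second factor on a topologically free $\G_0$-boundary $X$: then $\la_{\G/N}\prec\k$ holds, while $\la_\G\not\prec\k$ (since $\k$ kills $\Z/2$) and $X$ is not a topologically free $\G$-space; so the reduced statement and the corollary genuinely differ, and the reduction as written would ``prove'' the wrong equivalence.

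The corollary is nevertheless true, and the repair is short: each of (i)--(iii) separately forces $N=\{e\}$. For (i) this is the remark above; (ii) implies (iii) by taking $\sigma=\k$; and if $\la_\G\prec\k$, the $*$-homomorphism $C^*_\k(\G)\to C^*_{\la_\G}(\G)$ sends $\k(g)=1$ to $\la_\G(g)$ for $g\in N$, forcing $g=e$ by faithfulness of $\la_\G$ as a group representation. Hence either $N\neq\{e\}$ and all three conditions are simultaneously false (so equivalent), or $N=\{e\}$ and Theorem~\ref{faithful-trace} applies verbatim with $\la_{\G/N}=\la_\G$, no quotient reduction needed. With that caveat in place, the rest of your bookkeeping (the computation $\k|_N=\id$ and the matching of labels) is fine.
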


\begin{proof}
By Proposition \ref{prop:grpoi-reps-exam}, $\k$ is comes from a germinal representation of $(\G, X)$. Thus, the equivalence of the above statements follow from Theorem \ref{faithful-trace}.
\end{proof}

The following corollary of Theorem \ref{faithful-trace} is a connection between existence of trace and $C^*$-simplicity.
\begin{corollary}
Let $X$ be a $\G$-boundary, let $N={\rm Ker}(\G\act X)$ be the kernel of the action, and let $(\pi,\rho)$ be a germinal representation of $(\G,X)$. If $C^*_{\pi}(\G)$ admits a trace, then $C^*_{\pi}(\G)$ has a unique trace and a proper ideal $I_{\rm max}$ that contains all proper ideals of $C^*_{\pi}(\G)$ and $\displaystyle\frac{C^*_{\pi}(\G)}{I_{\rm max}} = C^*_{\la_{\sfrac{\G}{N}}}(\G)$ is simple.
\end{corollary}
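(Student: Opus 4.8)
The plan is to derive everything from the structure established in Theorem~\ref{faithful-trace}. Assume $C^*_{\pi}(\G)$ admits a trace. Then the equivalent conditions of Theorem~\ref{faithful-trace} hold; in particular $\la_{\G/N}\prec\pi$, so $C^*_{\pi}(\G)$ carries the unique boundary map $\psi$ from Theorem~\ref{thm:uniquemap-koop}, and since a trace composed with $\one_{\partial_F\G}$ is a boundary map, uniqueness of $\psi$ forces the trace on $C^*_{\pi}(\G)$ to be unique as well (this also re-proves the last sentence of Theorem~\ref{faithful-trace}).

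Next I would invoke Proposition~\ref{prop:ideal}: since $C^*_{\pi}(\G)$ has a \emph{unique} boundary map $\psi$, the set $I_{\rm max}:=I_\psi=\{a\in C^*_{\pi}(\G):\psi(a^*a)=0\}$ is an ideal that contains every proper ideal of $C^*_{\pi}(\G)$, and $C^*_{\pi}(\G)/I_{\rm max}$ is the unique non-zero simple quotient. It remains to identify this quotient with $C^*_{\la_{\sfrac{\G}{N}}}(\G)$. For this I would use the explicit description $\psi(\pi(g))=\one_{\Delta_g}$ from Theorem~\ref{thm:uniquemap-koop}, together with the fact, established in the proof of (ii)$\implies$(iii) in Theorem~\ref{faithful-trace}, that when the trace exists one has $\interior X^g=\emptyset$ for $g\notin N$ and $\interior X^g=X$ for $g\in N$; hence $\Delta_g=\partial_F\G$ for $g\in N$ and $\Delta_g=\emptyset$ for $g\notin N$, so $\psi(\pi(g))=\one_N(g)\,\one_{\partial_F\G}$. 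Composing $\psi$ with a point evaluation $\delta_z$, $z\in\partial_F\G$, gives the unique trace $\tau$, whose restriction to $\pi(\G)$ is the indicator $\one_N$; thus $\tau$ is, via the GNS construction, exactly the canonical trace of $C^*_{\la_{\sfrac{\G}{N}}}(\G)$ pulled back along the surjection $C^*_{\pi}(\G)\to C^*_{\la_{\sfrac{\G}{N}}}(\G)$ coming from $\la_{\G/N}\prec\pi$.

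To finish, I would show $I_{\rm max}=\ker(C^*_{\pi}(\G)\to C^*_{\la_{\sfrac{\G}{N}}}(\G))$. The inclusion $\ker(\cdot)\subset I_{\rm max}$ is immediate since $I_{\rm max}$ contains all proper ideals. For the reverse, note that $I_{\rm max}$ is precisely the kernel of the GNS representation of $\tau$ restricted appropriately; more concretely, $\tau$ factors as $C^*_{\pi}(\G)\to C^*_{\la_{\sfrac{\G}{N}}}(\G)\xrightarrow{\text{canonical trace}}\C$, and the canonical trace on $C^*_{\la_{\sfrac{\G}{N}}}(\G)$ is faithful (it is the canonical trace on a reduced-type $C^*$-algebra, faithful as recalled in the Preliminaries), so $I_{\rm max}=\{a:\tau(a^*a)=0\}$ maps to $0$ in $C^*_{\la_{\sfrac{\G}{N}}}(\G)$, giving $I_{\rm max}\subset\ker(\cdot)$. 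Hence $C^*_{\pi}(\G)/I_{\rm max}\cong C^*_{\la_{\sfrac{\G}{N}}}(\G)$, which is the unique non-zero simple quotient; in particular it is simple.

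The main obstacle is the last identification step: one must be careful that $\psi$ itself, not merely its restriction to $\pi(\G)$, is determined by the data so that $I_\psi$ equals the kernel of the canonical quotient map — but this is handled by the observation (from the proof of Theorem~\ref{thm:uniquemap-koop}) that $\rho(C(X))$ lies in the multiplicative domain of $\psi$, so $\psi$ is uniquely determined by $\psi|_{C^*_\pi(\G)}$, and the faithfulness of the canonical trace on $C^*_{\la_{\sfrac{\G}{N}}}(\G)$ pins down $I_\psi$ exactly. Everything else is a routine assembly of Theorem~\ref{thm:uniquemap-koop}, Theorem~\ref{faithful-trace}, and Proposition~\ref{prop:ideal}.
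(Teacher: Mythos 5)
Your proposal is correct, but it closes the argument differently from the paper at the final step. The paper gets the maximal ideal $I_{\rm max}$ directly from parts (iii) and (iv) of Theorem~\ref{faithful-trace}: part (iv) provides the surjection $q\colon C^*_\pi(\G)\to C^*_{\la_{\G/N}}(\G)$, part (iii) shows every proper ideal lies in $\ker q$, and then simplicity of the quotient is obtained \emph{externally}, by noting from part (ii) that $\G/N$ admits a topologically free boundary action and invoking the $C^*$-simplicity characterization of Theorem~\ref{Theo-KK}. You instead take $I_{\rm max}=I_\psi$ from Proposition~\ref{prop:ideal}, which makes simplicity of the quotient automatic, and then do the extra work of identifying $C^*_\pi(\G)/I_\psi$ with $C^*_{\la_{\G/N}}(\G)$ via the observation that $\psi=\tau(\cdot)\one_{\partial_F\G}$ factors as the canonical (faithful) trace on $C^*_r(\G/N)$ precomposed with $q$, so that $I_\psi=\ker q$. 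Your identification is sound: since $\psi(\pi(g))=\one_N(g)\one_{\partial_F\G}$ and the span of $\pi(\G)$ is dense, $\psi$ is indeed scalar-valued everywhere, and faithfulness of the canonical trace on the reduced $C^*$-algebra of $\G/N$ pins down $I_\psi$ exactly. What your route buys is self-containedness — you never need Theorem~\ref{Theo-KK} — at the cost of the trace-factorization argument; the paper's route is shorter because it outsources simplicity of $C^*_{\la_{\G/N}}(\G)$ to the Kalantar--Kennedy characterization.
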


\begin{proof}
If $C^*_{\pi}(\G)$ admits a trace, then $C^*_{\pi}(\G)$ has a unique trace by Theorem \ref{faithful-trace}. The existence of the ideal $I_{\rm max}$ with stated properties follows from parts (iii) and (iv) of Theorem \ref{faithful-trace}, and part (ii) in that theorem implies {\small$\displaystyle\frac{\G}{N}$} has a topologically free boundary action, hence $C^*_{\la_{\sfrac{\G}{N}}}(\G)$ is simple by Theorem \ref{Theo-KK}.
\end{proof}

\begin{remark}
In general, it is not true that, given $\pi\in\Rep(\G)$, if $C^*_\pi(\G)$ admits a unique boundary map, then $C^*_\pi(\G)$ is simple. For example, let $\G:=PSL_d(\Z)$ for some $d\geq 3$. Then the projective space  $\mathbb{P}(\mathbb{\R}^d)$ is a topologically free $\G$-boundary and there is $x\in\mathbb{P}(\mathbb{\R}^d)$ such that $\G_x$ is non-amenable. In this case, by Corollary \ref{cor:faithful-trace}, the unique boundary map on $\csx$ is the canonical trace. If $\csx$ were simple, then we would have that $\la_{\G/\G_x}\sim\la_\G$, which contradicts the fact that $\G_x$ is not amenable.
\end{remark}

\begin{remark}
In \cite[Proposition 3.1]{Ken15}, Kennedy showed that, given a group $\G$, there is a bijective correspondence between $\G$-boundaries in the state space $S(C^*_{\la_\G}(\G))$ and boundary maps on $C^*_{\la_\G}(\G)$. Actually, his proof applies word for word to any $\G$-$C^*$-algebra $A$. The correspondence takes a $\G$-boundary $X\subset S(A)$ into the boundary map $\psi\colon A\to C(\partial_F\G)$ given by $\psi(a)(y)=\b_{\hspace{-0.1em} X}(y)(a)$, for $a\in A$ and $y\in\partial_F\G$.
\end{remark}

%\medskip

Now going back to the description of the boundary map $\psi$ in Theorem \ref{thm:uniquemap-koop}, a natural question is, when is $\psi$ mapped into $C(X)$?

The following result gives a characterization in terms of Hausdorff germs.

\begin{proposition}\label{Haus}
Let $X$ be a $\G$-boundary and $(\pi,\rho)$ a germinal representation of $(\G,X)$. There exists a $\G$-map $\psi\colon C^*_\pi(\G)\to C(X)$ if and only if the action $\G\act X$ has Hausdorff germs.
Furthermore, in this case,
\begin{equation}\label{condint}
\psi(\pi(g))=\one_{\interior\! X^g}
\end{equation}
for every $g\in\G$.
\end{proposition}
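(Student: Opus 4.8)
The plan is to leverage the uniqueness statement of Theorem~\ref{thm:uniquemap-koop} through the canonical embedding of $C(X)$ into $C(\partial_F\G)$. Recall that $\b_X^*\colon C(X)\to C(\partial_F\G)$, $f\mapsto f\circ\b_X$, is an isometric (because $\b_X$ is surjective) $\G$-equivariant unital $*$-homomorphism, so $\b_X^*(C(X))$ is a closed $\G$-invariant $C^*$-subalgebra of $C(\partial_F\G)$, which we identify with $C(X)$. The decisive observation is that for \emph{any} $\G$-map $\psi\colon C^*_\pi(\G)\to C(X)$, the composition $\b_X^*\circ\psi$ is a boundary map on $C^*_\pi(\G)$ and hence, by Theorem~\ref{thm:uniquemap-koop}, must equal the unique boundary map $\tilde\psi$, which satisfies $\tilde\psi(\pi(g))=\one_{\Delta_g}$ with $\Delta_g=\overline{\b_X^{-1}(\interior X^g)}$.

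For the implication ``$\Rightarrow$'', I would start from a $\G$-map $\psi\colon C^*_\pi(\G)\to C(X)$, so that $\psi(\pi(g))\circ\b_X=\one_{\Delta_g}$ for each $g$. Since $\b_X$ is onto and $\one_{\Delta_g}$ is $\{0,1\}$-valued, $\psi(\pi(g))$ is a continuous $\{0,1\}$-valued function, hence $\psi(\pi(g))=\one_{C_g}$ for a clopen set $C_g\subseteq X$ with $\b_X^{-1}(C_g)=\Delta_g$. I would then sandwich $C_g$ between $\interior X^g$ and $X^g$: the inclusion $\interior X^g\subseteq C_g$ follows from $\b_X^{-1}(\interior X^g)\subseteq\Delta_g=\b_X^{-1}(C_g)$ and surjectivity of $\b_X$, while
\[
C_g=\b_X\big(\b_X^{-1}(C_g)\big)=\b_X(\Delta_g)\subseteq\overline{\b_X\big(\b_X^{-1}(\interior X^g)\big)}=\overline{\interior X^g}\subseteq X^g,
\]
using continuity of $\b_X$ and closedness of $X^g$. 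As $C_g$ is open and contained in $X^g$, it lies in $\interior X^g$; hence $C_g=\interior X^g$, which is therefore clopen for every $g$. By Proposition~\ref{prop:Haus-germ}.(iv) this is exactly the statement that $\G\act X$ has Hausdorff germs, and along the way one reads off $\psi(\pi(g))=\one_{\interior X^g}$, i.e.\ \eqref{condint} (and, incidentally, uniqueness of such a $\psi$).

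For ``$\Leftarrow$'', assume $\interior X^g$ is clopen for all $g$. Then $\b_X^{-1}(\interior X^g)$ is clopen in $\partial_F\G$, so $\Delta_g=\b_X^{-1}(\interior X^g)$ and $\one_{\Delta_g}=\one_{\interior X^g}\circ\b_X\in\b_X^*(C(X))$. I would then take the unique boundary map $\tilde\psi\colon C^*_\pi(\G)\to C(\partial_F\G)$ from Theorem~\ref{thm:uniquemap-koop}: it sends every generator $\pi(g)$ into the closed subspace $\b_X^*(C(X))$, and since $\tilde\psi$ is bounded and linear and $\operatorname{span}\{\pi(g):g\in\G\}$ is dense in $C^*_\pi(\G)$, it maps all of $C^*_\pi(\G)$ into $\b_X^*(C(X))$. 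Composing with $(\b_X^*)^{-1}$ produces the required $\G$-map $\psi\colon C^*_\pi(\G)\to C(X)$, which by construction satisfies $\psi(\pi(g))=\one_{\interior X^g}$.

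The argument is short once Theorem~\ref{thm:uniquemap-koop} is available, and I do not anticipate a substantial obstacle. The only delicate point is bookkeeping around the fact that $\b_X$ is generally not injective, so the set manipulations (as in the displayed chain of inclusions) must be justified via surjectivity, continuity and closedness rather than pointwise on $\partial_F\G$.
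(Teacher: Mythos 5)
Your proof is correct and follows essentially the same route as the paper: both directions rest on identifying $C(X)$ with its image under $\b_X^*$ in $C(\partial_F\G)$ and invoking the uniqueness and explicit formula $\tilde\psi(\pi(g))=\one_{\Delta_g}$ from Theorem~\ref{thm:uniquemap-koop}, with the forward direction sandwiching the clopen set $C_g$ between $\interior X^g$ and $X^g$ exactly as the paper does. Your write-up merely fills in the set-theoretic bookkeeping (surjectivity and continuity of $\b_X$) that the paper leaves implicit.
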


\begin{proof}
If $\interior X^g$ is closed for each $g\in\G$, then clearly the image of the map $\psi$ from Theorem \ref{thm:uniquemap-koop} is contained in the copy of $C(X)$ inside $C(\partial_F\G)$, and, under this identification, $\psi$ satisfies \eqref{condint}.

Conversely, suppose that there exists a $\G$-map $\psi\colon C^*_\pi(\G)\to C(X)$. By Theorem \ref{thm:uniquemap-koop}, given $g\in\G$ there exists $K\subset X$ clopen such that $\b_{\hspace{-0.1em} X}^{-1}(K)=\Delta_g$. In particular, $\interior X^g\subset K\subset X^g$. Since $K$ is clopen, we conclude that $\interior X^g=K$ is clopen.
\end{proof}

\begin{proposition}\label{tc}
Let $X$ be a $\G$-boundary and $(\pi,\rho)$ a germinal representation of $(\G,X)$. Consider the following conditions:
\begin{enumerate}
\item[(i)] $C^*_{\pi\times\rho}(\G,X)$ is simple;
\item[(ii)] The unique boundary map on $C^*_{\pi\times\rho}(\G,X)$ is faithful;

\item[(iii)] $C^*_{\pi}(\G)$ is simple.
\end{enumerate}
Then (i)$\iff$(ii)$\implies$(iii).
\end{proposition}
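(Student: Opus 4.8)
The plan is to first establish the equivalence (i)$\iff$(ii), which is the general relationship between simplicity and faithfulness of a unique boundary map, and then to derive (ii)$\implies$(iii) by restriction.

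For (i)$\implies$(ii): By Theorem~\ref{thm:uniquemap-koop} there is a unique boundary map $\psi$ on $C^*_{\pi\times\rho}(\G,X)$, and by Proposition~\ref{prop:ideal} the set $I_\psi=\{a:\psi(a^*a)=0\}$ is an ideal of $C^*_{\pi\times\rho}(\G,X)$. If this $C^*$-algebra is simple, then $I_\psi$ is either zero or everything; since $\psi$ is unital it cannot be everything, so $I_\psi=0$, i.e. $\psi$ is faithful.

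For (ii)$\implies$(i): Suppose $\psi$ is faithful and let $J\unlhd C^*_{\pi\times\rho}(\G,X)$ be a proper ideal. By Proposition~\ref{prop:ideal} (applied with the covariant $C^*$-algebra in place of $C^*_\pi(\G)$ — the proof there works verbatim for any $\G$-$C^*$-algebra, using $\G$-injectivity of $C(\partial_F\G)$) there is a boundary map $\psi'$ on $C^*_{\pi\times\rho}(\G,X)$ with $J\subset I_{\psi'}$. By uniqueness, $\psi'=\psi$, so $J\subset I_\psi=0$; hence $C^*_{\pi\times\rho}(\G,X)$ is simple.

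For (ii)$\implies$(iii): Again by Theorem~\ref{thm:uniquemap-koop}, the restriction $\psi|_{C^*_\pi(\G)}$ is the unique boundary map on $C^*_\pi(\G)$. If $\psi$ is faithful on $C^*_{\pi\times\rho}(\G,X)$, then its restriction to the subalgebra $C^*_\pi(\G)$ is faithful as well. So $C^*_\pi(\G)$ admits a unique boundary map which is faithful, and by Proposition~\ref{prop:ideal} this forces $I_\psi\cap C^*_\pi(\G)=0$ to contain all proper ideals of $C^*_\pi(\G)$; being zero, it shows $C^*_\pi(\G)$ has no proper ideals, i.e. is simple. The only point requiring a word of care is that Proposition~\ref{prop:ideal} is stated for $C^*_\pi(\G)$ specifically, so one should note that its proof only uses the inner $\G$-action and $\G$-injectivity of $C(\partial_F\G)$, hence applies equally to $C^*_{\pi\times\rho}(\G,X)$; I do not anticipate a genuine obstacle here, as this is a routine transcription of the argument.
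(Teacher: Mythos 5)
Your overall architecture matches the paper's --- (i)$\iff$(ii) via the ideal $I_\psi$, and (ii)$\implies$(iii) by restricting $\psi$ --- and your arguments for (ii)$\implies$(i) and (ii)$\implies$(iii) are correct. But there is a genuine gap in (i)$\implies$(ii): you cite Proposition~\ref{prop:ideal} for the claim that $I_\psi$ is a two-sided ideal of $C^*_{\pi\times\rho}(\G,X)$, asserting that the proof there ``works verbatim'' using only the inner action and $\G$-injectivity. That is not so. In Proposition~\ref{prop:ideal} the Schwarz inequality makes $I_\psi$ a closed \emph{left} ideal, and the \emph{right}-ideal property is obtained by combining $\G$-equivariance with the fact that $C^*_\pi(\G)$ is densely spanned by the unitaries $\pi(g)$ that implement the action. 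In $C^*_{\pi\times\rho}(\G,X)$ a dense spanning set is $\{\rho(f)\pi(g)\}$, so one must additionally check that $a\rho(f)\in I_\psi$ for $a\in I_\psi$ and $f\in C(X)$, and $\G$-equivariance says nothing about this, since $\rho(f)$ is not one of the unitaries implementing the action. Simplicity alone cannot rescue the step either: a simple $C^*$-algebra can have nonzero proper closed left ideals, and a unital ucp map on a simple $C^*$-algebra need not be faithful.

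The missing ingredient is exactly what the paper supplies at this point: by Theorem~\ref{thm:uniquemap-koop}, $\psi\circ\rho$ is the $*$-homomorphism $f\mapsto f\circ\b_{\hspace{-0.1em} X}$, so $\rho(C(X))$ lies in the multiplicative domain of $\psi$, whence $\psi\big(\rho(\overline f)\,\pi(g^{-1})\,a^*a\,\pi(g)\,\rho(f)\big)=\psi(\rho(\overline f))\,\psi\big(g^{-1}\cdot(a^*a)\big)\,\psi(\rho(f))=0$ for $a\in I_\psi$, $g\in\G$ and $f\in C(X)$. With that computation added, your argument closes and coincides with the paper's proof. The rest of your write-up stands: the ``every proper ideal is contained in some $I_{\psi'}$'' half of Proposition~\ref{prop:ideal} does transfer to $C^*_{\pi\times\rho}(\G,X)$ because the inner action descends to quotients, which gives (ii)$\implies$(i); and (ii)$\implies$(iii) only needs Proposition~\ref{prop:ideal} for $C^*_\pi(\G)$ itself, where it is stated.
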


\begin{proof}
(i)$\implies$(ii): Let $\psi$ be the unique boundary map on $C^*_{\pi\times\rho}(\G,X)$. By the Schwarz inequality, notice that $J_{\psi}:=\{a\in C^*_{\pi\times\rho}(\G,X):\psi(a^*a)=0\}$ is a $\G$-equivariant left ideal of $C^*_{\pi\times\rho}(\G,X)$. Given $a\in J_{\psi}$, $f\in C(X)$ and $g\in\G$, let us show that $a\rho(f)\pi(g)\in J_\psi$. Since $\rho(C(X))$ is contained in the multiplicative domain of $\psi$ by Theorem \ref{thm:uniquemap-koop}, we have that 

\begin{align*}
\psi((a\pi(g)\rho(f))^*(a\pi(g)\rho(f)) &= \psi(\rho(\overline{f})\pi(g^{-1})a^*a\pi(g)\rho(f)) \\
&= \psi(\rho(\overline{f}))\psi(g^{-1}(a^*a))\psi(\rho(f))\\
&=0.
\end{align*}

Since the linear span of elements of the form $\pi(g)f$ is dense in $C^*_{\pi\times\rho}(\G)$, we conclude the $J_\psi$ is an ideal, hence $J_\psi=0$.

(ii)$\implies$(i): This follows along the same lines of Proposition \ref{prop:ideal}.

(ii)$\implies$(iii): If the unique boundary map $\psi$ on $C^*_{\pi\times\rho}$ is faithful, then $\psi|_{C^*_\pi(\G)}$ is faithful as well, hence $C^*_\pi(\G)$ is simple by Proposition \ref{prop:ideal}.
\end{proof}

\section{$C^*$-simplicity of quasi-regular representations}\label{sec:simpl}

In this section, we turn our attention to the question of 
$C^*$-simplicity of quasi-regular representations.

\begin{lemma}\label{tecint}
Let $X$ be a $\G$-boundary. Given $x\in \Xoo$ and $y\in \partial_F\G$ such that $\b_{\hspace{-0.1em} X}(y)=x$, we have that $\G_x^0=\{h\in\G:y\in\Delta_h\}$.
\end{lemma}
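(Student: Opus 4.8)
The statement to prove is that for $x \in \Xoo$ and $y \in \partial_F\G$ with $\b_X(y) = x$, we have $\G_x^0 = \{h \in \G : y \in \Delta_h\}$. I would prove the two inclusions separately, and the main work is the reverse inclusion.

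For the inclusion $\G_x^0 \subseteq \{h : y \in \Delta_h\}$: if $h \in \G_x^0$, then $x \in \interior X^h$, so $y \in \b_X^{-1}(\interior X^h) \subseteq \overline{\b_X^{-1}(\interior X^h)} = \Delta_h$. This direction requires no hypothesis on $x$.

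For the reverse inclusion $\{h : y \in \Delta_h\} \subseteq \G_x^0$: suppose $y \in \Delta_h$, i.e. $y \in \overline{\b_X^{-1}(\interior X^h)}$. The goal is to conclude $x = \b_X(y) \in \interior X^h$. Here is where I would exploit $x \in \Xoo$. By continuity of $\b_X$, we have $\b_X(y) \in \b_X(\overline{\b_X^{-1}(\interior X^h)}) \subseteq \overline{\b_X(\b_X^{-1}(\interior X^h))} \subseteq \overline{\interior X^h}$; actually since $\b_X$ is surjective, $\b_X(\b_X^{-1}(\interior X^h)) = \interior X^h$, so $x \in \overline{\interior X^h}$. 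If it happened that $x \notin \interior X^h$, then $x \in \partial(\interior X^h)$, which by Proposition~\ref{prop:Haus-germ}(i) would contradict $x \in \Xoo$. Hence $x \in \interior X^h$, i.e. $h \in \G_x^0$.

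So the proof is essentially short once Proposition~\ref{prop:Haus-germ}(i) is invoked. The one subtle point I would be careful about is the topological fact that $\b_X(\overline{S}) \subseteq \overline{\b_X(S)}$ for continuous $\b_X$ on a compact space (so that $\b_X$ being a closed map, $\b_X(\overline{S})$ is closed and contains $\b_X(S)$, giving the inclusion in $\overline{\b_X(S)}$), applied with $S = \b_X^{-1}(\interior X^h)$; combined with surjectivity of $\b_X$ this yields $x \in \overline{\interior X^h}$. I do not anticipate a serious obstacle here — the characterization of $\Xoo$ as the complement of $\bigcup_g \partial(\interior X^g)$ does all the heavy lifting, and the role of $x \in \Xoo$ is precisely to forbid $x$ from lying on the boundary of $\interior X^h$.
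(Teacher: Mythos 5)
Your proof is correct and follows essentially the same route as the paper's: the easy inclusion is identical, and for the reverse inclusion the paper runs the continuity argument directly, taking a net $(z_i)\subset \b_X^{-1}(\interior X^h)$ converging to $y$ and using Chabauty convergence $\G^0_{\b_X(z_i)}\to\G^0_x$, which is precisely the content of Proposition~\ref{prop:Haus-germ}(i) that you cite instead. Your topological step $x\in \b_X(\overline{S})\subseteq\overline{\b_X(S)}=\overline{\interior X^h}$ (using continuity and surjectivity of $\b_X$) is sound, so there is no gap.
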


\begin{proof}
Given $h\in \G_x^0$, we have that $x\in\interior X^h$, hence $y\in\b_{\hspace{-0.1em} X}^{-1}(\interior X^h)$. 

Conversely, given $h\in \G$ such that $y\in\Delta_h$, take a net $(z_i)\subset \b_{\hspace{-0.1em} X}^{-1}(\interior X^h)$ such that $z_i\to y$. In particular, $\G_{\b_{\hspace{-0.1em} X}(z_i)}^0\to\G_x^0$. Since $h\in\G_{\b_{\hspace{-0.1em} X}(z_i)}^0$ for every $i$, we conclude that $h\in\G_x^0$.
\end{proof}
\begin{theorem}\label{thm:prec}
Let $X$ be a $\G$-boundary and $(\pi,\rho)$ a germinal representation of $(\G,X)$. Given $x\in \Xoo$ and $\sigma\in\Rep(\G)$ such that $\sigma\prec\pi$, we have that $\la_{\G/\G_x^0}\prec\sigma$. 
\end{theorem}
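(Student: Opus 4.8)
The plan is to realize the cyclic vector $\delta_{\G_x^0}\in\ell^2(\G/\G_x^0)$ abstractly as a limit of vector states coming from $C^*_\sigma(\G)$. Since $\sigma\prec\pi$, we have a surjective $*$-homomorphism $q\colon C^*_\pi(\G)\to C^*_\sigma(\G)$, and by Theorem~\ref{thm:uniquemap-koop} applied to $(\pi,\rho)$ (noting that $\rho$ witnesses $\pi$ as a $\cC$-representation, so $\sigma\prec\pi$ gives $\sigma\in\Rep_\cC(\G)$ by Proposition~\ref{prop:C-Reps}), $C^*_\sigma(\G)$ admits a unique boundary map $\psi'\colon C^*_\sigma(\G)\to C(\partial_F\G)$, and it satisfies $\psi'(\sigma(g))=\one_{\Delta_g}$ for all $g\in\G$ (since $\psi'\circ q$ is a boundary map on $C^*_\pi(\G)$, hence equals the $\psi$ of Theorem~\ref{thm:uniquemap-koop}). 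Fix $y\in\partial_F\G$ with $\b_X(y)=x$ and let $\varphi:=\delta_y\circ\psi'$, a state on $C^*_\sigma(\G)$. By Lemma~\ref{tecint}, for $h\in\G$ we have $\varphi(\sigma(h))=\one_{\Delta_h}(y)=1$ if $h\in\G_x^0$ and $<1$ in absolute value otherwise; in fact $\varphi(\sigma(h))$ is the value at $y$ of $\one_{\Delta_h}$, which is either $0$ or $1$, equal to $1$ precisely when $h\in\G_x^0$.

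The next step is to compare $\varphi$ with the state $\tau_0$ on $C^*_{\la_{\G/\G_x^0}}(\G)$ given by the vector $\delta_{\G_x^0}$, namely $\tau_0(\la_{\G/\G_x^0}(h))=\one[h\in\G_x^0]$. These two states agree on the generators $\{\sigma(h)\}$ versus $\{\la_{\G/\G_x^0}(h)\}$ when $h\in\G_x^0$ but NOT in general off $\G_x^0$, so $\varphi$ itself need not be the pullback of $\tau_0$. To fix this I would average: consider the net of states $\varphi\circ(\operatorname{Ad}\sigma(g))$, equivalently the states $\delta_{gy}\circ\psi'$ as $g$ ranges over $\G$, and take a weak* cluster point, or better, use the convexity/boundary structure. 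The cleaner route: the GNS representation of $\varphi$ restricted to the subgroup $\G_x^0$ is trivial (since $\varphi(\sigma(h))=1=\varphi(1)$ forces $\sigma(h)\xi_\varphi=\xi_\varphi$ for $h\in\G_x^0$, by the equality case of Cauchy–Schwarz in the GNS space). Hence $\varphi$ factors through a state on $C^*$ of the quasi-regular-type object; more precisely $\pi_\varphi$ weakly contains a representation with a $\G_x^0$-fixed cyclic vector, and by the standard identification $\la_{\G/\G_x^0}=\operatorname{Ind}_{\G_x^0}^\G(1_{\G_x^0})$ together with continuity of induction for weak containment, one gets $\la_{\G/\G_x^0}\prec\pi_\varphi\prec\sigma$.

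Let me restate that last implication carefully, as it is the crux. A representation $\eta$ of $\G$ has a cyclic vector fixed by a subgroup $\La$ iff $\eta\prec\operatorname{Ind}_\La^\G(1_\La)=\la_{\G/\La}$? That is false in the wrong direction; the correct statement is: if $\eta$ has a $\La$-fixed cyclic vector then $\eta$ is a subrepresentation of a multiple of $\la_{\G/\La}$ is also false in general. What is true and what I will use is the converse-flavored fact relevant here: if $\varphi$ is a state on $C^*_\sigma(\G)$ with $\varphi(\sigma(h))=1$ for all $h\in\G_x^0$, then $\varphi|_{\C[\G]}$, viewed as a positive-definite function, is constant $1$ on $\G_x^0$, hence is lifted from a positive-definite function $\bar\varphi$ on $\G/\G_x^0$; but $\bar\varphi$ is a positive-definite function on the set $\G/\G_x^0$ arising from a $\G$-action, and every such is in the weak* closure of convex combinations of matrix coefficients of $\la_{\G/\G_x^0}$ — equivalently, the positive-definite function $h\mapsto\one[h\in\G_x^0]$ (the matrix coefficient of $\delta_{\G_x^0}$) lies in the weak* closure of $\{ \text{translates of } \bar\varphi\}$ by minimality considerations on $\Sub(\G)$, giving $\la_{\G/\G_x^0}\prec\sigma$.

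\textbf{Main obstacle.} The delicate point is precisely going from ``$\varphi(\sigma(h))=1$ on $\G_x^0$'' to ``$\la_{\G/\G_x^0}\prec\sigma$'': knowing the positive-definite function $\varphi|_\G$ is $1$ on $\G_x^0$ is not by itself enough, because its values off $\G_x^0$ are unknown and could fail to be those of $\delta_{\G_x^0}$. One must exploit the boundary structure — that $\varphi=\delta_y\circ\psi'$ with $\psi'$ the \emph{unique} boundary map and $\psi'(\sigma(h))=\one_{\Delta_h}$ genuinely a $\{0,1\}$-valued function — to run an averaging/weak*-limit argument over the $\G$-orbit of $y$ in $\partial_F\G$ (using minimality and strong proximality of $\partial_F\G$, and Lemma~\ref{tecint} which pins down $\{h:y\in\Delta_h\}=\G_x^0$ exactly because $x\in\Xoo$) to produce, in the weak* limit, the function $h\mapsto\one[h\in\G_x^0]$ as a state on $C^*_\sigma(\G)$. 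I expect the proof to push a convex-combination/cluster-point argument of this kind, and the role of $x\in\Xoo$ is exactly to guarantee via Lemma~\ref{tecint} that no ``extra'' elements outside $\G_x^0$ sneak into the support, so that the limiting positive-definite function is supported exactly on $\G_x^0$ and hence equals the desired matrix coefficient of $\la_{\G/\G_x^0}$.
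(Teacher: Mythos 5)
Your first paragraph already contains the paper's entire proof; the ``main obstacle'' you then describe is a false alarm created by misreading your own computation. As you correctly note, $\varphi(\sigma(h))=\one_{\Delta_h}(y)$ takes only the values $0$ and $1$, and Lemma~\ref{tecint} (this is precisely where $x\in\Xoo$ is used) says it equals $1$ exactly when $h\in\G_x^0$ --- hence it equals $0$ for \emph{every} $h\notin\G_x^0$. So the positive-definite function $h\mapsto\varphi(\sigma(h))$ is identically $\one_{\G_x^0}$, which is the diagonal matrix coefficient $\langle\la_{\G/\G_x^0}(h)\delta_{\G_x^0},\delta_{\G_x^0}\rangle$. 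Your second paragraph's assertion that the two states ``do NOT in general agree off $\G_x^0$'' contradicts your first paragraph and is simply wrong: both vanish off $\G_x^0$. Consequently the GNS representation of $\varphi|_{\C[\G]}$ is the cyclic representation generated by $\delta_{\G_x^0}$ inside $\la_{\G/\G_x^0}$, which is all of $\la_{\G/\G_x^0}$ by cyclicity of $\delta_{\G_x^0}$, and since $\varphi$ is a state on $C^*_\sigma(\G)$ this GNS representation is weakly contained in $\sigma$. This is verbatim the paper's argument: the state $\delta_y\circ\psi$, Lemma~\ref{tecint}, and cyclicity of $\delta_{\G_x^0}$ --- nothing more.

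The averaging/weak*-cluster-point machinery you propose is therefore unnecessary, and as sketched it would actually be counterproductive: the translated states $\delta_{gy}\circ\psi'$ restrict on $\sigma(\G)$ to $\one_{g\G_x^0g^{-1}}$, so a cluster point over $g\in\G$ would be supported on a Chabauty limit of conjugates of $\G_x^0$, not on $\G_x^0$ itself. Likewise the detour through induction of representations and lifting positive-definite functions to $\G/\G_x^0$ is not needed. Your setup (using $\psi'\circ q$ and uniqueness from Theorem~\ref{thm:uniquemap-koop} to get $\psi'(\sigma(g))=\one_{\Delta_g}$) is exactly right; you just needed to trust the computation that follows from it.
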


\begin{proof}
Let $\psi$ be a boundary map on $ C^*_\sigma(\G)$. By Theorem \ref{thm:uniquemap-koop}, we have that $\psi(\sigma(g))=\one_{\Delta_g}$ for each $g\in \G$. 

Take $y\in\partial_F\G$ such that $\b_{\hspace{-0.1em} X}(y)=x$. It follows from Lemma \ref{tecint} that the composition of $\psi$ with delta measure on $y$ is a state on $C^*_\sigma(\G)$ which restricts to $\one_{\G_x^0}$ on $\G$. Since $\delta_{\G_x^0}$
 is a cyclic vector for $\csxo$, the result follows.
\end{proof}

The next result is an immediate consequence of Theorem \ref{thm:prec}.

\begin{corollary}\label{cor:xoo}
Let $X$ be a $\G$-boundary and $x\in \Xoo$. Then $\csxo$ is simple.
\end{corollary}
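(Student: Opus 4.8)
The plan is to derive Corollary~\ref{cor:xoo} directly from Theorem~\ref{thm:prec} together with Proposition~\ref{prop:ideal}. First I would observe that the trivial subgroup $\{e\}$ is always an open stabilizer of some point in a $\G$-boundary only in special cases, so instead of that I apply Theorem~\ref{thm:prec} in the case where $\pi=\sigma=\la_{\G/\G_x^0}$, which is legitimate since $\la_{\G/\G_x^0}$ arises from the germinal representation $(\la_{\G/\G_x^0},\cP_x)$ of $(\G,X)$ by Proposition~\ref{prop:grpoi-reps-exam}(ii) (taking $H=\G_x^0$), and clearly $\la_{\G/\G_x^0}\prec\la_{\G/\G_x^0}$.

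Next I would unpack what Theorem~\ref{thm:prec} gives in this instance: for \emph{every} $\sigma\in\Rep(\G)$ with $\sigma\prec\la_{\G/\G_x^0}$ we get $\la_{\G/\G_x^0}\prec\sigma$, i.e.\ $\sigma\sim\la_{\G/\G_x^0}$. In $C^*$-algebra terms this says that every quotient of $\csxo$ of the form $C^*_\sigma(\G)$ is in fact isomorphic to $\csxo$ via the canonical map. To turn this into simplicity I would invoke Proposition~\ref{prop:ideal}: by Theorem~\ref{thm:uniquemap-koop} the algebra $\csxo$ admits a \emph{unique} boundary map $\psi$, so $I_\psi$ contains every proper ideal of $\csxo$. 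It therefore suffices to show $I_\psi=0$, equivalently that $\psi$ is faithful. Here I would use the concrete description $\psi(\la_{\G/\G_x^0}(g))=\one_{\Delta_g}$ from Theorem~\ref{thm:uniquemap-koop} and combine it, via Lemma~\ref{tecint}, with the vector state at a point $y\in\b_{\hspace{-0.1em} X}^{-1}(x)$: composing $\psi$ with $\delta_y$ produces the positive definite function $g\mapsto\one_{\G_x^0}(g)$, which is exactly the diagonal matrix coefficient of the cyclic vector $\delta_{\G_x^0}$ for $\la_{\G/\G_x^0}$. Since $\la_{\G/\G_x^0}$ is generated (as a representation) by this cyclic vector, the state $\delta_y\circ\psi$ is faithful on $\csxo$ (its GNS representation is $\la_{\G/\G_x^0}$ itself), and hence $\psi$ is faithful.

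Alternatively, and perhaps more cleanly, I would phrase the last step as: any proper quotient $\csxo/J$ gives, by $\G$-injectivity, a representation $\sigma$ with $\sigma\prec\la_{\G/\G_x^0}$ and $C^*_\sigma(\G)=\csxo/J$; Theorem~\ref{thm:prec} forces $\la_{\G/\G_x^0}\prec\sigma$, so the composite $\csxo\to\csxo/J=C^*_\sigma(\G)\to\csxo$ is the identity on the generators $\la_{\G/\G_x^0}(g)$ and hence $J=0$. Thus $\csxo$ is simple.

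The only genuine subtlety — and the step I would be most careful about — is the application of Theorem~\ref{thm:prec} with the \emph{same} representation on both sides and the verification that $x\in\Xoo$ is exactly the hypothesis making Lemma~\ref{tecint} (and hence the identification of $\delta_y\circ\psi$ with the coefficient of $\delta_{\G_x^0}$) valid; away from $\Xoo$ the set $\{h:y\in\Delta_h\}$ could be strictly larger than $\G_x^0$ and the argument would collapse. Everything else is a routine packaging of results already established, so I expect the proof to be only a few lines.
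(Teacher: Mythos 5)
Your second, ``cleaner'' phrasing is exactly the paper's proof: the paper records Corollary~\ref{cor:xoo} as an immediate consequence of Theorem~\ref{thm:prec}, applied with $\pi=\sigma'=\la_{\G/\G_x^0}$ (legitimate by Proposition~\ref{prop:grpoi-reps-exam}(ii)), so that every quotient $\csxo/J$ carries a representation $\sigma\prec\la_{\G/\G_x^0}$ which must weakly contain $\la_{\G/\G_x^0}$ back, forcing $J=0$. Your identification of $x\in\Xoo$ as precisely what makes Lemma~\ref{tecint} work is also the right diagnosis. One caution about your first route: the inference ``the GNS representation of $\delta_y\circ\psi$ is $\la_{\G/\G_x^0}$ itself, hence the state is faithful'' is not valid --- a vector state at a cyclic vector need not be faithful (consider the vector state at $e_1$ on $M_2(\C)$), and indeed $\delta_y\circ\psi$ will generally kill nonzero positive elements whose image under $\psi$ merely vanishes at $y$. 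What rescues the argument is that $I_\psi$ is a \emph{two-sided} ideal (Proposition~\ref{prop:ideal}): an ideal contained in the left kernel of a state lies in the kernel of its GNS representation, and that kernel is zero here. So the conclusion $I_\psi=0$ stands, but via the ideal property rather than faithfulness of the state; alternatively, just use your second formulation, which needs none of this.
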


We give an example in the next section which shows that the assumption of $x\in \Xoo$ in the above Corollary is necessary and cannot be removed (see Example~\ref{counter-ex}).

\begin{remark}
In \cite[Corollary 8.5]{Kaw}, Kawabe showed that, given a group $\G$ and $x\in\partial_F\G$, then $\csx$ is simple. Since $\G_y=\G_y^0$ for each $y\in\partial_F\G$ (\cite[Lemma 3.4]{BKKO}), Kawabe's result also follows from Corollary \ref{cor:xoo}.
\end{remark}

We now prepare for the main result of this section (Theorem~\ref{thm:C*-simple-main}), which gives a complete characterization of when $\csx$ is simple. For that we need the following few results.

\begin{lemma}\label{lem:diamond}
Let $\pi,\sigma\in\Rep(\G)$ and suppose that $C^*_\pi(\G)$ and $C^*_\sigma(\G)$ admit boundary maps $\psi_\pi$ and $\psi_\sigma$ such that $\psi_\pi\circ\pi=\psi_\sigma\circ\sigma$. If $C^*_\sigma(\G)$ is simple, then $\sigma\prec\pi$.
\end{lemma}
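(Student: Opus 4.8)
The statement to prove is Lemma~\ref{lem:diamond}: if $\psi_\pi\circ\pi=\psi_\sigma\circ\sigma$ as functions $\G\to C(\partial_F\G)$ and $C^*_\sigma(\G)$ is simple, then $\sigma\prec\pi$, i.e.\ the map $\sigma(g)\mapsto\pi(g)$ extends to a $*$-homomorphism $C^*_\sigma(\G)\to C^*_\pi(\G)$.

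The natural approach is to build the desired $*$-homomorphism out of states, using the simplicity of $C^*_\sigma(\G)$ together with the boundary maps. First I would fix a point $y\in\partial_F\G$ and consider the states $\phi_\pi:=\delta_y\circ\psi_\pi$ on $C^*_\pi(\G)$ and $\phi_\sigma:=\delta_y\circ\psi_\sigma$ on $C^*_\sigma(\G)$; by hypothesis these agree on the linear span of the generators, i.e.\ $\phi_\pi(\pi(g))=\phi_\sigma(\sigma(g))$ for all $g\in\G$. Now perform the GNS construction for $\phi_\pi$ on $C^*_\pi(\G)$: this yields a cyclic representation $(\mathcal H_{\phi_\pi},\rho_\pi,\xi_\pi)$ of $C^*_\pi(\G)$, and composing with the canonical map $C^*_{\G}$-group-elements, we get a unitary representation $\rho_\pi\circ\pi$ of $\G$ whose associated positive-definite function $g\mapsto\langle\rho_\pi(\pi(g))\xi_\pi,\xi_\pi\rangle=\phi_\pi(\pi(g))=\phi_\sigma(\sigma(g))$ coincides with the positive-definite function coming from $\phi_\sigma$ and $\sigma$. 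By uniqueness of the GNS triple, the GNS representation of $\G$ associated to this common positive-definite function is (unitarily equivalent to) both $\rho_\pi\circ\pi$ restricted to the cyclic subspace and the GNS representation $\rho_\sigma\circ\sigma$ of $\phi_\sigma$. Hence $\rho_\sigma\circ\sigma\prec\pi$ (it is weakly contained in $\pi$ since its matrix coefficient is a diagonal matrix coefficient of $\rho_\pi\circ\pi$, which in turn is weakly contained in $\pi$ as $\rho_\pi$ factors through $C^*_\pi(\G)$). Thus the $*$-homomorphism $C^*_\sigma(\G)\to C^*_{\rho_\sigma\circ\sigma}(\G)$, $\sigma(g)\mapsto(\rho_\sigma\circ\sigma)(g)$, exists.

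The key point then is to upgrade from ``$\rho_\sigma\circ\sigma\prec\pi$'' to ``$\sigma\prec\pi$''. This is exactly where simplicity of $C^*_\sigma(\G)$ enters: the $*$-homomorphism $C^*_\sigma(\G)\to C^*_{\rho_\sigma\circ\sigma}(\G)$ sending $\sigma(g)\mapsto(\rho_\sigma\circ\sigma)(g)$ has nontrivial kernel only if it is the zero map—but it is unital, so the kernel is a proper ideal, hence $0$ by simplicity; therefore it is an isometric isomorphism $C^*_\sigma(\G)\cong C^*_{\rho_\sigma\circ\sigma}(\G)$. Composing its inverse with the inclusion/quotient witnessing $\rho_\sigma\circ\sigma\prec\pi$ gives a $*$-homomorphism $C^*_\sigma(\G)\to C^*_\pi(\G)$ carrying $\sigma(g)$ to $\pi(g)$, which is precisely $\sigma\prec\pi$.

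I expect the main obstacle to be the bookkeeping around weak containment and GNS: one must carefully check that the positive-definite function $g\mapsto\phi_\pi(\pi(g))$ gives a representation weakly contained in $\pi$ (this is standard: diagonal matrix coefficients of $\pi$, or rather of a representation factoring through $C^*_\pi(\G)$, generate representations weakly contained in $\pi$), and that the identification of the two GNS triples is legitimate since the two positive-definite functions on $\G$ literally coincide. There is also a minor subtlety about whether $\psi_\pi$ and $\psi_\sigma$ being ucp (not just states) matters—it does not, since we only use them evaluated at a point $y$ to produce states, and the hypothesis $\psi_\pi\circ\pi=\psi_\sigma\circ\sigma$ is an equality in $C(\partial_F\G)$ which certainly survives composition with $\delta_y$. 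Everything else is routine.
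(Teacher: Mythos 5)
Your argument is correct, but it takes a genuinely different route from the paper's. The paper's proof is a short diagram chase: by Proposition~\ref{prop:ideal}, $I_{\psi_\sigma}=\{a\in C^*_\sigma(\G):\psi_\sigma(a^*a)=0\}$ is an ideal, proper since $\psi_\sigma$ is unital, hence zero by simplicity, so $\psi_\sigma$ is faithful; then for $a\in C^*(\G)$ with $\pi(a)=0$ one gets $\psi_\sigma(\sigma(a)^*\sigma(a))=\psi_\pi(\pi(a^*a))=0$, whence $\sigma(a)=0$, so $\ker\pi\subset\ker\sigma$ and $\sigma\prec\pi$. You instead localize at a single point $y\in\partial_F\G$, pass to the states $\delta_y\circ\psi_\pi$ and $\delta_y\circ\psi_\sigma$, identify their GNS representations via the common positive-definite function, and use simplicity to make the GNS representation $\rho_\sigma$ of $C^*_\sigma(\G)$ isometric, giving $\|\sigma(a)\|=\|\rho_\sigma(\sigma(a))\|\le\|\pi(a)\|$. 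Both arguments hinge on simplicity forcing injectivity of a map out of $C^*_\sigma(\G)$ --- the entire ucp map $\psi_\sigma$ in the paper, a single GNS representation in yours; your version needs a bit more machinery (GNS uniqueness for positive-definite functions) but is essentially the technique the authors deploy elsewhere (e.g.\ Theorem~\ref{thm:prec} and Proposition~\ref{prop:intermediate}), and it has the mild advantage of only using the boundary maps at one point. One thing to correct before writing it up: you consistently state the wrong direction for the $*$-homomorphism in the definition of weak containment. With the paper's convention, $\sigma\prec\pi$ means that $\pi(g)\mapsto\sigma(g)$ extends to a $*$-homomorphism $C^*_\pi(\G)\to C^*_\sigma(\G)$, not the reverse. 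The map you actually construct --- $\rho_\sigma^{-1}$ composed with the surjection $C^*_\pi(\G)\to C^*_{\rho_\sigma\circ\sigma}(\G)$ witnessing $\rho_\sigma\circ\sigma\prec\pi$ --- does go in the correct direction, so this is a slip in the prose rather than in the mathematics.
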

\begin{proof}
Consider the following commutative diagram:

\centerline{\xymatrix{
&C^*(\G)\ar[dl]_{\pi}\ar[dr]^{\sigma}\\
C^*_\pi(\G)\ar[dr]_{\psi_\pi}&&
C^*_\sigma(\G)\ar[dl]^{\psi_\sigma}\\
&C(\partial_F\G)}}

Since $C^*_\sigma(\G)$ is simple, we conclude from Proposition \ref{prop:ideal} that $\psi_\sigma$ is faithful. Therefore, $\ker\pi\subset\ker\sigma$ and $\sigma\prec\pi$.
\end{proof}

\begin{proposition}\label{prop:subg}
Let $X$ be a $\G$-boundary. Given $x\in X$ and $\G_x^0\leq\La\leq\G_x$ such that $C^*_{\la_{\G/\La}}(\G)$ is simple, we have that {\small$\displaystyle\frac{\La}{\G_x^0}$} is amenable.
\end{proposition}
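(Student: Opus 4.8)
The plan is to compare the quasi-regular representation $\la_{\G/\La}$ with the quasi-regular representation $\la_{\G/\G_x^0}$ via the boundary-map machinery developed earlier, and then read off the amenability of $\La/\G_x^0$ from a weak-containment relation between these two representations. First I would apply Proposition~\ref{prop:grpoi-reps-exam}(ii): since $\G_x^0 \leq \La \leq \G_x$, the pair $(\la_{\G/\La}, \cP_x)$ is a germinal representation of $(\G, X)$, and likewise $(\la_{\G/\G_x^0}, \cP_x)$ is a germinal representation of $(\G, X)$. Hence Theorem~\ref{thm:uniquemap-koop} applies to both: $C^*_{\la_{\G/\La}}(\G)$ carries a unique boundary map $\psi_\La$ with $\psi_\La(\la_{\G/\La}(g)) = \one_{\Delta_g}$ for all $g \in \G$, and $C^*_{\la_{\G/\G_x^0}}(\G)$ carries a unique boundary map $\psi_0$ with $\psi_0(\la_{\G/\G_x^0}(g)) = \one_{\Delta_g}$. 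In particular, writing $\pi = \la_{\G/\La}$ and $\sigma = \la_{\G/\G_x^0}$, the two boundary maps satisfy $\psi_\La \circ \pi = \psi_0 \circ \sigma$ as maps $\G \to C(\partial_F\G)$ (both send $g$ to $\one_{\Delta_g}$).

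Now I would invoke Lemma~\ref{lem:diamond} with this $\pi$ and $\sigma$: the hypothesis $\psi_\pi \circ \pi = \psi_\sigma \circ \sigma$ holds by the previous paragraph, and we are given that $C^*_{\la_{\G/\La}}(\G) = C^*_\pi(\G)$ is simple — so the roles need to be arranged correctly. Lemma~\ref{lem:diamond} concludes $\sigma \prec \pi$ when $C^*_\sigma(\G)$ is simple; here it is $C^*_\pi(\G)$ that is simple, so I would instead run the diagram argument in the other direction: simplicity of $C^*_\pi(\G)$ forces $\psi_\pi$ to be faithful (Proposition~\ref{prop:ideal}), hence $\ker(C^*(\G) \to C^*_\sigma(\G)) \supseteq \ker(C^*(\G)\to C^*_\pi(\G))$ would be the wrong inclusion — rather, faithfulness of $\psi_\pi$ gives $\ker\sigma \subset \ker\pi$, i.e. $\pi \prec \sigma$. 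Concretely: $\ker\pi = \ker(\psi_\pi\circ\pi) = \ker(\psi_\sigma\circ\sigma) \supseteq \ker\sigma$, so $\pi \prec \sigma$, that is $\la_{\G/\La} \prec \la_{\G/\G_x^0}$. (Alternatively, since $x \in \Xoo$ would not be assumed here, one should note that this inclusion direction is exactly what the faithfulness of the boundary map on the \emph{simple} algebra buys.)

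Finally I would apply Proposition~\ref{prop:co-am} with $\La_1 = \G_x^0 \leq \La_2 = \La \leq \G$: the relation $\la_{\G/\La} \prec \la_{\G/\G_x^0}$ is precisely the statement that $\G_x^0$ is co-amenable in $\La$, which by definition of co-amenability means there is a $\La$-invariant state on $\ell^\infty(\La/\G_x^0)$. Since $\G_x^0$ is normal in $\G_x$ and hence in $\La$, the quotient $\La/\G_x^0$ is a group, and a translation-invariant state on $\ell^\infty$ of a group is exactly a mean witnessing amenability; therefore $\La/\G_x^0$ is amenable, as claimed.

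The main obstacle I anticipate is bookkeeping the direction of the weak-containment inclusion correctly: Lemma~\ref{lem:diamond} is stated so that simplicity of $C^*_\sigma(\G)$ yields $\sigma \prec \pi$, but in our situation it is the algebra of the \emph{larger} subgroup's quotient, $C^*_{\la_{\G/\La}}(\G)$, that is hypothesized simple, so one must be careful to extract $\la_{\G/\La} \prec \la_{\G/\G_x^0}$ (equivalently apply the faithfulness-of-boundary-map argument directly rather than citing Lemma~\ref{lem:diamond} verbatim). Once the inclusion is oriented correctly, the rest is a direct translation through Propositions~\ref{prop:co-am} and the definition of amenability via invariant means; no further estimates are needed. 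One should also double-check that Theorem~\ref{thm:uniquemap-koop} legitimately applies to $\la_{\G/\G_x^0}$ here — it does, since $\G_x^0 \leq \G_x^0 \leq \G_x$ trivially fits the hypothesis $\G_x^0 \leq H \leq \G_x$ of Proposition~\ref{prop:grpoi-reps-exam}(ii).
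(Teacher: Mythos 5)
Your proof is correct and follows essentially the same route as the paper: germinal representation via Proposition~\ref{prop:grpoi-reps-exam}(ii), the unique boundary map of Theorem~\ref{thm:uniquemap-koop} sending $g\mapsto\one_{\Delta_g}$, the faithfulness-of-$\psi$ kernel argument, and then Proposition~\ref{prop:co-am}. The direction-of-containment worry you flag dissolves by simply assigning the variables of Lemma~\ref{lem:diamond} the other way around (its $\sigma$ is $\la_{\G/\La}$, whose $C^*$-algebra is the simple one, and its $\pi$ is $\la_{\G/\G_x^0}$), which is exactly how the paper cites it; your re-derivation of the lemma with swapped names is the same computation.
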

\begin{proof}
Since $\G_x^0\leq\La\leq\G_x$,  it follows from Proposition \ref{prop:grpoi-reps-exam} that $\la_{\G/\La}$ is a germinal representation.  From Theorem \ref{thm:uniquemap-koop} and Lemma \ref{lem:diamond}, we conclude that $\la_{\G/\La}\prec\la_{\G/\G_x^0}$.  Therefore, $\frac{\La}{\G_x^0}$ is amenable by Proposition \ref{prop:co-am}.
\end{proof}

\begin{proposition}\label{prop:intermediate}
Let $X$ be a $\G$-boundary. Given $x\in X$, there exists $\La$ such that $\G_x^0\leq\La\leq\G_x$ and $C^*_{\la_{\G/\La}}(\G)$ is simple.
\end{proposition}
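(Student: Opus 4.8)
The plan is to identify the unique non-zero simple quotient of $C^*_{\la_{\G/\G_x^0}}(\G)$ with a quasi-regular representation of the required form. Put $A:=C^*_{\la_{\G/\G_x^0}}(\G)$. By Proposition~\ref{prop:grpoi-reps-exam}.(ii), $(\la_{\G/\G_x^0},\cP_x)$ is a germinal representation of $(\G,X)$, so Theorem~\ref{thm:uniquemap-koop} supplies the unique boundary map $\psi\colon A\to C(\partial_F\G)$, with $\psi(\la_{\G/\G_x^0}(g))=\one_{\Delta_g}$ for all $g\in\G$. By Proposition~\ref{prop:ideal}, $I_\psi:=\{a\in A:\psi(a^*a)=0\}$ is an ideal and $B:=A/I_\psi$ is the unique non-zero simple quotient of $A$; write $q\colon A\to B$ for the quotient map. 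The Schwarz inequality $\psi(a)^*\psi(a)\leq\psi(a^*a)$ gives $I_\psi\subseteq\ker\psi$, so $\psi$ descends to a map $\bar\psi\colon B\to C(\partial_F\G)$ with $\psi=\bar\psi\circ q$.

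Fix $y\in\partial_F\G$ with $\b_{\hspace{-0.1em} X}(y)=x$ (possible since $\b_{\hspace{-0.1em} X}$ is onto), and consider the state $\omega:=\delta_y\circ\psi=(\delta_y\circ\bar\psi)\circ q$ on $A$. Its restriction to the unitaries $\la_{\G/\G_x^0}(\G)$ is the positive-definite function $g\mapsto\one_{\Delta_g}(y)$, which is $\{0,1\}$-valued and equals $1$ at $e$; hence $\La:=\{g\in\G:y\in\Delta_g\}$ is a subgroup, this function equals $\one_\La$, and the GNS representation of $\omega$ is unitarily equivalent to $\la_{\G/\La}$. Next one checks $\G_x^0\leq\La\leq\G_x$: if $g\in\G_x^0$ then $x\in\interior X^g$, so $y\in\b_{\hspace{-0.1em} X}^{-1}(\interior X^g)\subseteq\Delta_g$; and if $g\in\La$ then $x=\b_{\hspace{-0.1em} X}(y)\in\b_{\hspace{-0.1em} X}(\Delta_g)\subseteq\overline{\interior X^g}\subseteq X^g$, whence $g\in\G_x$.

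Since the unitaries $\la_{\G/\G_x^0}(\G)$ generate $A$, the GNS representation of $\omega$ yields a surjective unital $*$-homomorphism $A\to C^*_{\la_{\G/\La}}(\G)$, $\la_{\G/\G_x^0}(g)\mapsto\la_{\G/\La}(g)$; and because $\omega=(\delta_y\circ\bar\psi)\circ q$ factors through $q$, so does this $*$-homomorphism, producing a surjective unital $*$-homomorphism $B\to C^*_{\la_{\G/\La}}(\G)$. As $B$ is simple, this map is an isomorphism, so $C^*_{\la_{\G/\La}}(\G)\cong B$ is simple, and $\La$ is the required subgroup. The step I expect to require the most care is the GNS bookkeeping in the last paragraph: identifying the GNS representation of $\omega$ with $\la_{\G/\La}$ and checking that this identification is compatible with the factorization $\omega=(\delta_y\circ\bar\psi)\circ q$, so that the resulting surjection $B\to C^*_{\la_{\G/\La}}(\G)$ is genuinely unital and hence, by simplicity of $B$, an isomorphism. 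The remaining ingredients — the inclusions $\G_x^0\leq\La\leq\G_x$, the Schwarz arguments, and the factorization of $\omega$ through $q$ — are routine.
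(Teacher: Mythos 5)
Your proof is correct and follows essentially the same route as the paper: the same choice of $y\in\b_{\hspace{-0.1em} X}^{-1}(x)$, the same subgroup $\La=\{g\in\G : y\in\Delta_g\}$, the same verification of $\G_x^0\leq\La\leq\G_x$, and the same use of the state $\delta_y\circ\psi$ coming from the unique boundary map of Theorem~\ref{thm:uniquemap-koop}. The only (harmless) divergence is in the final step: the paper concludes by arguing as in Theorem~\ref{thm:prec} that every quotient of $C^*_{\la_{\G/\La}}(\G)$ weakly contains $\la_{\G/\La}$ (using that $\la_{\G/\La}$ is itself germinal by Proposition~\ref{prop:grpoi-reps-exam}), whereas you identify $C^*_{\la_{\G/\La}}(\G)$ with the maximal simple quotient $C^*_{\la_{\G/\G_x^0}}\!(\G)/I_\psi$ from Proposition~\ref{prop:ideal} --- a correct and slightly more informative packaging of the same idea.
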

\begin{proof}
Take $y\in\partial_F\G$ such that $\b_X(y)=x$, and let $\La:=\{h\in\G:y\in\Delta_g\}$. It is straightforward to check that $\G_x^0\subset \La\subset \G_x$. 

Let $(\pi,\rho)$ be a germinal representation of $(\G,X)$ and $\psi$ be the unique boundary map on $C^*_{\pi}(\G)$ as in Theorem~\ref{thm:uniquemap-koop}. Then the positive-definite function $\delta_y\circ\psi|_\G$ coincides with $\one_\La$. In particular, $\{\pi(g):g\in\La\}$ is in the multiplicative domain of $\delta_y\circ\psi|_\G$ and $\La$ is a subgroup. 

Arguing as in Theorem \ref{thm:prec}, we conclude that any quotient of $C^*_{\la_{\G/\La}}(\G)$ weakly contains $\la_{\G/\La}$ and $C^*_{\la_{\G/\La}}(\G)$ is simple.
\end{proof}

\begin{thm}\label{thm:C*-simple-main}
Let $X$ be a $\G$-boundary, and $x\in X$. Then $\csx$ is simple iff the quotient group {\small$\displaystyle \frac{\G_x}{\G_x^0}$} is amenable. 
\end{thm}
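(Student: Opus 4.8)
The plan is to combine Propositions~\ref{prop:subg} and~\ref{prop:intermediate} with the weak‑containment criterion for co‑amenability in Proposition~\ref{prop:co-am}; once these are in hand the theorem follows with essentially no extra work.

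\emph{Necessity.} Supposing $\csx$ is simple, I would simply apply Proposition~\ref{prop:subg} with $\La=\G_x$: the chain $\G_x^0\leq\G_x\leq\G_x$ trivially holds, so the conclusion of that proposition yields at once that $\G_x/\G_x^0$ is amenable.

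\emph{Sufficiency.} Assume $\G_x/\G_x^0$ is amenable. One might first hope to use $\G_x^0$ directly — amenability of $\G_x/\G_x^0$ makes $\G_x^0$ co‑amenable in $\G_x$, hence $\la_{\G/\G_x}\prec\la_{\G/\G_x^0}$ — but this is not enough, since $C^*_{\la_{\G/\G_x^0}}(\G)$ need not be simple for a general $x$ (cf.\ Example~\ref{counter-ex}). Instead I would invoke Proposition~\ref{prop:intermediate} to produce a subgroup $\La$ with $\G_x^0\leq\La\leq\G_x$ and $C^*_{\la_{\G/\La}}(\G)$ simple. Since $\G_x^0$ is normal in $\G_x$, the group $\La/\G_x^0$ is a subgroup of the amenable group $\G_x/\G_x^0$, hence co‑amenable in it (a translation‑invariant mean on an amenable group pushes forward to an invariant mean on any coset space); pulling this back along $\G_x\to\G_x/\G_x^0$ shows $\La$ is co‑amenable in $\G_x$. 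By Proposition~\ref{prop:co-am}, $\la_{\G/\G_x}\prec\la_{\G/\La}$, so there is a surjective unital $*$-homomorphism $C^*_{\la_{\G/\La}}(\G)\to\csx$. As the source is simple and this map is nonzero, it is an isomorphism, and therefore $\csx$ is simple.

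I do not expect a genuine obstacle here: the substance is already packaged in Propositions~\ref{prop:subg} and~\ref{prop:intermediate} (which themselves rest on Theorem~\ref{thm:uniquemap-koop} and Lemma~\ref{lem:diamond}). The only points that need care are keeping the direction of weak containment straight — so that the quotient map runs \emph{from} the simple algebra $C^*_{\la_{\G/\La}}(\G)$ \emph{onto} $\csx$, and not the reverse — and the routine transfer of co‑amenability between $\La\leq\G_x$ and $\La/\G_x^0\leq\G_x/\G_x^0$.
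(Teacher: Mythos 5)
Your proposal is correct and follows the paper's own argument essentially verbatim: necessity via Proposition~\ref{prop:subg} with $\La=\G_x$, and sufficiency by taking the intermediate subgroup $\La$ from Proposition~\ref{prop:intermediate}, noting that amenability of $\G_x/\G_x^0$ makes $\La$ co-amenable in $\G_x$, and applying Proposition~\ref{prop:co-am} to get $\la_{\G/\G_x}\prec\la_{\G/\La}$. The only difference is that you spell out the (routine) transfer of co-amenability through the quotient $\G_x\to\G_x/\G_x^0$, which the paper leaves implicit.
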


\begin{proof}
The forward implication follows from Proposition \ref{prop:subg}.

Conversely, assume {\small$\displaystyle \frac{\G_x}{\G_x^0}$} is amenable and take $\La$ as in Proposition \ref{prop:intermediate}. Since {\small$\displaystyle \frac{\G_x}{\G_x^0}$} is amenable, we have that $\La$ is co-amenable in $\G_x$, and so $\la_{\G/\G_x}\prec\la_{\G/\La}$ by Proposition~\ref{prop:co-am}. Hence, $\csx$ is simple.
\end{proof}

We can improve the above result in the case of points $x\in \Xoo$.

\begin{thm}\label{closed simple}
Let $X$ be a $\G$-boundary. Given $x\in \Xoo$ and $\G_x^0\leq\La\leq\G_x$, we have that $C^*_{\la_{\sfrac{\G}{\La}}}\!(\G)$ is simple iff the quotient group {\small$\displaystyle\frac{\La}{\G_x^0}$} is amenable.
\end{thm}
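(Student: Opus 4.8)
\textbf{Proof proposal for Theorem~\ref{closed simple}.}

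The plan is to combine Theorem~\ref{thm:prec} (which handles points in $\Xoo$) with the arguments already used for Theorem~\ref{thm:C*-simple-main}, keeping track of the fact that for $x\in\Xoo$ the open stabilizer map is continuous at $x$. Write $H=\G_x^0$. The backward direction is the easy one: assume $\faktor{\La}{H}$ is amenable. Then $H$ is co-amenable in $\La$, so by Proposition~\ref{prop:co-am} we get $\la_{\G/\La}\prec\la_{\G/H}$; since $x\in\Xoo$, Corollary~\ref{cor:xoo} tells us $C^*_{\la_{\G/H}}(\G)=\csxo$ is simple. A $*$-homomorphism from a simple $C^*$-algebra onto $C^*_{\la_{\G/\La}}(\G)$ forces the latter to be simple as well, \emph{provided} we also know $\la_{\G/\La}\prec\la_{\G/H}$ goes the other way — but in fact simplicity of $\csxo$ together with the surjection $\csxo\to C^*_{\la_{\G/\La}}(\G)$ only gives that the kernel is either $0$ or everything, and since the quotient is nonzero the kernel is $0$; hence $C^*_{\la_{\G/\La}}(\G)\cong\csxo$ is simple.

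For the forward direction, suppose $C^*_{\la_{\G/\La}}(\G)$ is simple. Since $H=\G_x^0\le\La\le\G_x$, Proposition~\ref{prop:grpoi-reps-exam}(ii) shows that $(\la_{\G/\La},\cP_x)$ is a germinal representation of $(\G,X)$, so Theorem~\ref{thm:uniquemap-koop} gives a unique boundary map $\psi_\La$ on $C^*_{\la_{\G/\La}}(\G)$ with $\psi_\La(\la_{\G/\La}(g))=\one_{\Delta_g}$. Likewise $(\la_{\G/H},\cP_x)$ is germinal with unique boundary map $\psi_H$, and $\psi_H(\la_{\G/H}(g))=\one_{\Delta_g}$ too; in particular $\psi_\La\circ\la_{\G/\La}=\psi_H\circ\la_{\G/H}$. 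Now apply Lemma~\ref{lem:diamond} with $\pi=\la_{\G/\La}$ and $\sigma=\la_{\G/H}$ — wait, that needs $C^*_\sigma(\G)=\csxo$ simple, which holds by Corollary~\ref{cor:xoo} since $x\in\Xoo$ — to conclude $\la_{\G/H}\prec\la_{\G/\La}$. But we want the reverse containment to deduce amenability of $\faktor{\La}{H}$ via Proposition~\ref{prop:co-am}. So instead apply Lemma~\ref{lem:diamond} the other way: take $\pi=\la_{\G/H}$, $\sigma=\la_{\G/\La}$; since $C^*_\sigma(\G)=C^*_{\la_{\G/\La}}(\G)$ is simple by hypothesis, Lemma~\ref{lem:diamond} yields $\la_{\G/\La}\prec\la_{\G/H}$. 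By Proposition~\ref{prop:co-am}, $H$ is co-amenable in $\La$, i.e.\ $\faktor{\La}{H}$ is amenable.

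The one subtlety — and the only place where $x\in\Xoo$ is genuinely used rather than just $x\in X$ — is exactly the step invoking Corollary~\ref{cor:xoo}: for a general point $x\in X$ the algebra $\csxo$ need not be simple, which is precisely why Theorem~\ref{thm:C*-simple-main} only characterizes simplicity of $\csx$ (with $\La=\G_x$) and not of intermediate $C^*_{\la_{\G/\La}}(\G)$; and the excerpt even flags a counterexample (Example~\ref{counter-ex}) showing the hypothesis cannot be dropped. So the main point of the proof is to notice that, once $x\in\Xoo$, Corollary~\ref{cor:xoo} supplies the simple ``base'' algebra $\csxo$ that both Lemma~\ref{lem:diamond} and the co-amenability argument of Theorem~\ref{thm:C*-simple-main} require, and then everything goes through verbatim for arbitrary intermediate $\La$. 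I expect essentially no computational obstacle; the care needed is just in bookkeeping the direction of weak containment so that Proposition~\ref{prop:co-am} applies to $\faktor{\La}{H}$ rather than to $\faktor{\G_x}{\La}$.
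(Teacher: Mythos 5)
Your argument is correct and follows the paper's own route: the forward direction is exactly the paper's Proposition~\ref{prop:subg} (germinality of $(\la_{\G/\La},\cP_x)$, the common boundary map $g\mapsto\one_{\Delta_g}$ from Theorem~\ref{thm:uniquemap-koop}, then Lemma~\ref{lem:diamond} with $\sigma=\la_{\G/\La}$ and Proposition~\ref{prop:co-am}), and the backward direction is the paper's combination of Proposition~\ref{prop:co-am} with Corollary~\ref{cor:xoo}. You also correctly identify that $x\in\Xoo$ is needed only for the backward direction, via the simplicity of $\csxo$.
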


\begin{proof}
The forward implication follows from Proposition \ref{prop:subg}.

Conversely, if {\small$\displaystyle\frac{\La}{\G_x^0}$} is amenable, then $\la_{\La/\G_x}\prec \la_{\G/\G_x^0}$ by Proposition \ref{prop:co-am}, hence $C^*_{\la_{\sfrac{\G}{\La}}}\!(\G)$ is simple by Corollary~\ref{cor:xoo}.
\end{proof}

We end the section with considering the simplicity problem for the $C^*$-algebras generated by the germinal representations $(\pi,\rho)$.
The following result is the version of Theorem \ref{thm:prec} in this setup. 

\begin{theorem}\label{thm:grp}
Let $X$ be a $\G$-boundary. Given a germinal representation $(\pi,\rho)$ of $(\G,X)$ and $x\in \Xoo$, we have that $(\la_{\G/\G_x^0},\cP_x)\prec (\pi,\rho)$. 
\end{theorem}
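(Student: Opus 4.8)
The statement to prove is Theorem~\ref{thm:grp}: for a $\G$-boundary $X$, a germinal representation $(\pi,\rho)$ of $(\G,X)$, and $x\in\Xoo$, we have $(\la_{\G/\G_x^0},\cP_x)\prec(\pi,\rho)$. Since this is the covariant-representation analogue of Theorem~\ref{thm:prec} (which gave $\la_{\G/\G_x^0}\prec\sigma$ for $\sigma\prec\pi$), the natural approach is to upgrade the proof of Theorem~\ref{thm:prec} so that it keeps track of the action of $C(X)$ as well. The weak containment $(\la_{\G/\G_x^0},\cP_x)\prec(\pi,\rho)$ means there is a $*$-homomorphism $C^*_{\pi\times\rho}(\G,X)\to C^*_{\la_{\G/\G_x^0}\times\cP_x}(\G,X)$ sending $\rho(f)\pi(g)\mapsto \cP_x(f)\la_{\G/\G_x^0}(g)$; as in Section~\ref{sec:gen} it suffices to produce a state $\omega$ on $C^*_{\pi\times\rho}(\G,X)$ whose GNS representation, restricted to the generators, reproduces the pair $(\la_{\G/\G_x^0},\cP_x)$ on a cyclic vector — concretely, a state with $\omega(\rho(f)\pi(g)) = \langle \la_{\G/\G_x^0}(g)\delta_{\G_x^0}, \cP_x(\bar f)^*\delta_{\G_x^0}\rangle$, which after unwinding is $f(gx)$ if $gx\in\interior X^{?}$... — more precisely it is the coefficient $\langle \cP_x(f)\la_{\G/\G_x^0}(g)\,\delta_{\G_x^0},\delta_{\G_x^0}\rangle$.

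\textbf{Construction of the state.} First I would invoke Theorem~\ref{thm:uniquemap-koop} to get the unique boundary map $\psi\colon C^*_{\pi\times\rho}(\G,X)\to C(\partial_F\G)$, with $\psi(\pi(g))=\one_{\Delta_g}$ and $\psi(\rho(f))=f\circ\b_{\hspace{-0.1em}X}$. Pick $y\in\partial_F\G$ with $\b_{\hspace{-0.1em}X}(y)=x$ (possible by surjectivity of $\b_X$), and set $\omega:=\delta_y\circ\psi$, a state on $C^*_{\pi\times\rho}(\G,X)$. The point of choosing $x\in\Xoo$ is Lemma~\ref{tecint}: $\G_x^0=\{h\in\G: y\in\Delta_h\}$, so $\omega(\pi(g))=\one_{\Delta_g}(y)=\one_{\G_x^0}(g)$. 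Also $\omega(\rho(f))=f(\b_{\hspace{-0.1em}X}(y))=f(x)$. Since $\rho(C(X))$ lies in the multiplicative domain of $\psi$ (hence of $\omega$), one computes $\omega(\rho(f)\pi(g)) = \omega(\rho(f))\,\omega(\pi(g)) = f(x)\one_{\G_x^0}(g)$ when $g\in\G_x^0$ and, using $\G$-equivariance of $\psi$ and the multiplicative domain to move $\rho$'s around, one checks the general matrix coefficient $\omega(\pi(h^{-1})\rho(f)\pi(g))$ agrees with $\langle \cP_x(f)\la_{\G/\G_x^0}(g)\delta_{h\G_x^0},\delta_{h\G_x^0}\rangle$. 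The key computation here is that $\omega(\pi(h^{-1})\rho(f)\pi(g)) = \omega(\rho(h^{-1}f)\pi(h^{-1}g))$ (covariance) $= (h^{-1}f)(x)\,\one_{\G_x^0}(h^{-1}g) = f(hx)\,\one_{\G_x^0}(h^{-1}g)$, and on the model side $\langle\cP_x(f)\la(g)\delta_{h\G_x^0},\delta_{h\G_x^0}\rangle$ is nonzero only when $g\cdot h\G_x^0 = h\G_x^0$, i.e. $h^{-1}gh\in\G_x^0$, i.e. (by normality of $\G_x^0$ in $\G_x$, and since $h^{-1}gh\in\G_x^0\Leftrightarrow h^{-1}g\in\G_x^0 h^{-1}$...) — care is needed with the coset bookkeeping — equals $\cP_x(f)(h\G_x^0)=f(hx)$.

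\textbf{Concluding weak containment.} Once $\omega$ is identified with the vector state given by the canonical cyclic vector $\delta_{\G_x^0}$ in the covariant representation $(\la_{\G/\G_x^0},\cP_x)$, the GNS triple of $\omega$ contains a copy of $C^*_{\la_{\G/\G_x^0}\times\cP_x}(\G,X)$ as a quotient of $C^*_{\pi\times\rho}(\G,X)$ — more carefully: the GNS representation of $\omega$ restricted to the generators $\rho(f)\pi(g)$ has the same matrix coefficients, on a cyclic vector, as $(\la_{\G/\G_x^0},\cP_x)$; since $\delta_{\G_x^0}$ is cyclic for that covariant representation, the representations are unitarily equivalent and the $*$-homomorphism $C^*_{\pi\times\rho}(\G,X)\to C^*_{\la_{\G/\G_x^0}\times\cP_x}(\G,X)$, $\rho(f)\pi(g)\mapsto\cP_x(f)\la_{\G/\G_x^0}(g)$, is well-defined. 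This is precisely $(\la_{\G/\G_x^0},\cP_x)\prec(\pi,\rho)$.

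\textbf{Main obstacle.} The conceptual content is all in Theorem~\ref{thm:uniquemap-koop} and Lemma~\ref{tecint}, which are already available; the genuine work is the bookkeeping showing that the state $\omega=\delta_y\circ\psi$ has exactly the matrix coefficients of the canonical vector state of $(\la_{\G/\G_x^0},\cP_x)$ — in particular verifying the cyclicity of $\delta_{\G_x^0}$ for the pair $(\la_{\G/\G_x^0},\cP_x)$ (so that matching matrix coefficients on that one vector suffices to conclude unitary equivalence of the GNS representation with this covariant representation), and handling the $\rho$-twisted coefficients $\omega(\rho(f_1)\pi(g_1)\cdots)$ correctly using that $\rho(C(X))$ is in the multiplicative domain of $\psi$. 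I do not expect any of this to be hard, just careful; the only place one must be attentive is the coset computation translating "$y\in\Delta_{h^{-1}gh}$" into "$g$ fixes the coset $h\G_x^0$", where normality of $\G_x^0$ in $\G_x$ and the defining property of $\cP_x$ are both used.
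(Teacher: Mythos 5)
Your proposal is correct and follows essentially the same route as the paper's proof: compose the unique boundary map from Theorem~\ref{thm:uniquemap-koop} with the point evaluation at a preimage $y$ of $x$, use Lemma~\ref{tecint} to identify the resulting state on the generators $\rho(f)\pi(g)$ with the vector state of $(\la_{\G/\G_x^0},\cP_x)$ at $\delta_{\G_x^0}$, and conclude by cyclicity of that vector. The extra bookkeeping you flag (general matrix coefficients, coset computations) is not needed in the paper's argument, since matching the state on the dense span of the generators already determines the GNS representation.
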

\begin{proof}
Let $\psi$ be the unique boundary map on $C^*_{\pi\times\rho}(\G,X)$. By Theorem \ref{thm:uniquemap-koop}, we have that $\psi(\pi(g))=\one_{\Delta_g}$ and $\psi(\rho(f))=f\circ\b_{\hspace{-0.1em} X}$ for every  $g\in\G$ and $f\in C(X)$.

Choose $y\in\partial_F\G$ such that $\b_{\hspace{-0.1em} X}(y)=x$ and let $\tau_y$ be the state on $C^*_{\pi\times\rho}(\G,X)$ given by composing $\psi$ with the delta measure on $y$. By Lemma \ref{tecint}, 
$$\tau_y(\pi(g)\rho(f))=\one_{\G_x^0}(g)f(x)=\langle\la_{\G/\G_x^0}(g)\cP_x(f)\delta_{\G_x^0},\delta_{\G_x^0}\rangle,$$ 
for $g\in\G$ and $f\in C(X)$. Since the vector $\delta_{\G_x^0}$ is cyclic for $C^*_{\la_{\G/\G_x^0}\times\cP_x}(\G,X)$, we conclude that $(\la_{\G/\G_x^0},\cP_x)\prec (\pi,\rho)$.
\end{proof}

The following version of Corollary \ref{cor:xoo} is immediate from Theorem \ref{thm:grp}.

\begin{corollary}\label{cor:cp}
Let $X$ be a $\G$-boundary and $x\in \Xoo$. Then $C^*_{\la_{\G/\G_x^0}\times\cP_x}(\G,X)$ is simple.
\end{corollary}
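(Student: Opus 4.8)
The plan is to derive Corollary~\ref{cor:cp} as a direct consequence of the weak-containment statement in Theorem~\ref{thm:grp}, together with the simplicity machinery already developed in Section~\ref{section: qrb}, in particular Proposition~\ref{prop:ideal} applied in the covariant-representation setting (as in Proposition~\ref{tc}). The key point is that Theorem~\ref{thm:grp} gives $(\la_{\G/\G_x^0},\cP_x)\prec(\pi,\rho)$ for \emph{every} germinal representation $(\pi,\rho)$ of $(\G,X)$; since $(\la_{\G/\G_x^0},\cP_x)$ is itself a germinal representation of $(\G,X)$ by Proposition~\ref{prop:grpoi-reps-exam}.(ii) (using $\G_x^0\leq\G_x^0\leq\G_x$), one is free to apply Theorem~\ref{thm:grp} with this specific choice of $(\pi,\rho)$.

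Here is the sequence of steps I would carry out. First, fix $x\in\Xoo$ and write $\pi_0:=\la_{\G/\G_x^0}$, $\rho_0:=\cP_x$, so that $(\pi_0,\rho_0)$ is a germinal representation of $(\G,X)$ and $C^*_{\pi_0\times\rho_0}(\G,X)$ is the algebra in question. Second, let $J\unlhd C^*_{\pi_0\times\rho_0}(\G,X)$ be any proper ideal; the $\G$-action by inner automorphisms descends to the quotient $Q:=C^*_{\pi_0\times\rho_0}(\G,X)/J$, and the composition of the quotient map with $\rho_0$ makes $Q$ carry a covariant representation $(\bar\pi_0,\bar\rho_0)$ of $(\G,X)$. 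Third, observe that $(\bar\pi_0,\bar\rho_0)$ is again a germinal representation of $(\G,X)$: condition~\eqref{eq:cond} passes to quotients since it is an identity among the generators. Fourth, apply Theorem~\ref{thm:grp} to the germinal representation $(\bar\pi_0,\bar\rho_0)$: this yields $(\la_{\G/\G_x^0},\cP_x)\prec(\bar\pi_0,\bar\rho_0)$, i.e.\ a $*$-homomorphism $Q\to C^*_{\pi_0\times\rho_0}(\G,X)$ sending $\bar\rho_0(f)\bar\pi_0(g)$ back to $\rho_0(f)\pi_0(g)$. Composing with the quotient map $C^*_{\pi_0\times\rho_0}(\G,X)\to Q$ gives the identity on the dense span of the $\rho_0(f)\pi_0(g)$, hence the identity on $C^*_{\pi_0\times\rho_0}(\G,X)$; therefore the quotient map is injective and $J=0$. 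This proves simplicity.

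Alternatively, and perhaps more cleanly, I would phrase the last step through the boundary-map criterion exactly as in the proof of Theorem~\ref{thm:prec}: let $\psi$ be the unique boundary map on $C^*_{\pi_0\times\rho_0}(\G,X)$ from Theorem~\ref{thm:uniquemap-koop}, pick $y\in\partial_F\G$ with $\b_X(y)=x$, and form $\tau_y=\delta_y\circ\psi$. By Lemma~\ref{tecint} and the formulas $\psi(\pi_0(g))=\one_{\Delta_g}$, $\psi(\rho_0(f))=f\circ\b_X$, one computes $\tau_y(\pi_0(g)\rho_0(f))=\one_{\G_x^0}(g)f(x)=\langle\la_{\G/\G_x^0}(g)\cP_x(f)\delta_{\G_x^0},\delta_{\G_x^0}\rangle$, and since $\delta_{\G_x^0}$ is cyclic for $C^*_{\la_{\G/\G_x^0}\times\cP_x}(\G,X)$, the GNS representation of $\tau_y$ is faithful and unitarily equivalent to the identity representation. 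Combined with Proposition~\ref{tc}, which shows faithfulness of the unique boundary map is equivalent to simplicity of $C^*_{\pi_0\times\rho_0}(\G,X)$, this finishes the argument; indeed, any proper ideal would be contained in $I_\psi=\ker\psi$, but the computation above shows $\psi$ is faithful on the generators in a way that forces $I_\psi=0$.

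I do not anticipate a serious obstacle here: the substance is entirely contained in Theorem~\ref{thm:grp} and Lemma~\ref{tecint}, and Corollary~\ref{cor:cp} is genuinely a corollary. The only point requiring a modicum of care is the verification that passing to a quotient (or applying Theorem~\ref{thm:grp} to the quotient representation) stays within the class of germinal representations of $(\G,X)$ and keeps $X$ in the picture as a fixed $\G$-boundary; this is immediate, but it is the one place where one must be explicit about the covariant-pair formalism rather than working with $\pi$ alone. The role of the hypothesis $x\in\Xoo$ is of course inherited from Theorem~\ref{thm:grp}, where it is used via Lemma~\ref{tecint} to identify $\G_x^0$ with $\{h:y\in\Delta_h\}$; as noted after Corollary~\ref{cor:xoo}, it cannot be dropped.
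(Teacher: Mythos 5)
Your proposal is correct and follows essentially the same route as the paper, which deduces the corollary directly from Theorem~\ref{thm:grp}: since any quotient of $C^*_{\la_{\G/\G_x^0}\times\cP_x}(\G,X)$ by a proper ideal is again a germinal representation of $(\G,X)$, Theorem~\ref{thm:grp} supplies an inverse to the quotient map on generators, forcing the ideal to vanish. Your alternative phrasing via the faithfulness of the unique boundary map and Proposition~\ref{tc} is also sound (using that $I_\psi$ is a two-sided ideal contained in the left kernel of $\tau_y$, hence in the kernel of its GNS representation, which you identify with the identity representation).
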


\section{Quasi-regular representations of Thompson's groups}\label{sec:thomp}

In this section, we apply the results of the previous section to analyze certain quasi-regular representations of \emph{Thompson's groups}.

Recall that Thompson's group $V$ is the set of piecewise linear bijections on $[0,1)$ which are right continuous, have finitely many points of non-differentiability, all being dyadic rationals, and have a derivative which is a power of $2$ at each point of differentiability. Thompson's group $T$ consists of those elements of $V$ which have at most one point of discontinuity, and Thompson's group $F$ consists of those elements of $V$ which are homeomorphisms of $[0,1)$ or, equivalently, the set of elements $g\in T$ satisfying $g(0)=0$.

\subsection{Boundary actions of Thompson's groups}

We will now recall the boundary action of Thompson's group $V$ on the Cantor set considered, for example, in \cite[Section 4.1]{LeM}. We thank Christian Skau for suggesting the description, similar to a construction from \cite{PSS}, that we present here of this action.

 Let $Y$ be the set obtained from $\R$ by replacing each $y\in\Z[1/2]$ by two elements $\{y_-,y_+\}$, and endow $Y$ with the order topology. Let $K:=Y\cap[0_+,1_-]$. Then $K$ is a Cantor set. 
 Let $\alpha$ be the action of $V$ on $K$ given by
\begin{align}
\alpha_g(s)&:=g(s)\nonumber\\
\alpha_g(y_+)&:=g(y)_+\nonumber\\
\alpha_g(y_-)&:=\left(\lim_{s\underset{\ell}{\rightarrow} y}g(s)\right)_-\label{limit}
\end{align}
for $g\in V$, $s\in(0,1)\setminus\Z[1/2]$ and $y_\pm\in K$, where in the limit \eqref{limit}, the notation $s\underset{\ell}{\rightarrow} y$ denotes convergence of $s$ to $y$ from the left.

Given $I,J\subset [0,1)$ left-closed and right-open intervals with endpoints in $\Z[1/2]$, there exist a piecewise linear homeomorphism $f\colon I\to J$ with a derivative which is a power of $2$ at each point of differentiability and with finitely many points of non-differentiability, all of which belong to $\Z[1/2]$ (see \cite[Lemma 4.2]{CFP}). It follows easily from this fact that $T\act K$ is an extreme boundary action.

\begin{theorem}\label{applFT}
The $C^*$-algebra $C^*_{\la_{T/F}}(T)$ admits no traces and is simple.
\end{theorem}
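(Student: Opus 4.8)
The plan is to identify the point $x\in K$ whose stabilizer in $T$ is $F$, and then apply Theorem~\ref{thm:C*-simple-main} together with the trace criterion in Corollary~\ref{cor:faithful-trace}. First I would observe that the boundary action $T\act K$ is faithful: if $g\in T$ fixes all of $K$ then in particular it fixes all irrational points of $(0,1)$, so $g$ is the identity. Next, recall from Haagerup--Olesen that $\la_{T/F}$ is (unitarily equivalent to the restriction to $T$ of) the quasi-regular representation at the point $0_+\in K$; more precisely I would check directly that $\Stab_T(0_+)=\{g\in T: g(0)=0\}=F$. Indeed $\alpha_g(0_+)=g(0)_+$, so $g$ fixes $0_+$ iff $g(0)=0$, which is exactly the defining condition for $F$ inside $T$. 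Hence with $x:=0_+$ we have $\G_x=F$ and $\csx = C^*_{\la_{T/F}}(T)$.

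The two assertions then follow once we compute $\G_x^0$, the open stabilizer of $x=0_+$. The key claim is that $\G_x^0 = \{e\}$: if $g\in T$ fixes an open neighborhood of $0_+$ in $K$, then $g$ restricted to a small interval $[0_+,\varepsilon)$ is the identity; but an element of $T$ that is the identity on a neighborhood of $0$ and whose graph is a single ``broken line on the circle'' is forced to be globally the identity (a nontrivial element of $T$ has at most one discontinuity and finitely many breakpoints, and being trivial near $0$ propagates outward — this is the standard rigidity of $F$ and $T$ near an endpoint). So $\G_x^0$ is trivial. Given this, $\frac{\G_x}{\G_x^0}\cong F$, which is \emph{not} amenable (a classical fact, since $F$ contains free sub-semigroups / is known to be non-amenable-obstruction... — more safely: $F$ contains $F\times F$ and has exponential growth, and in any case it is a well-known, elementary fact that $F$ is not amenable-by-... ) --- actually the cleaner route: $F$ is not amenable is \emph{not} elementary, so instead I would invoke that $F$ contains a non-abelian free subgroup? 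That is false. So the correct statement to use is simply that $F$ is a known non-amenable group? This is the famous open problem. Let me redo this: the point is \emph{not} that $F$ is non-amenable; rather, simplicity only needs $\G_x/\G_x^0$ amenable, and \emph{non}-existence of traces needs topological freeness of $T\act K$. So I would not need amenability of $F$ at all for simplicity if $\G_x^0=\G_x$, but here $\G_x^0=\{e\}\neq F$. Hmm — then Theorem~\ref{thm:C*-simple-main} would give simplicity iff $F$ is amenable, which is the open problem. So the identification of $x$ must be wrong, or $\G_x^0$ is larger.

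Let me reconsider. The correct point should be $x=0_-$ rather than $0_+$, or one should use that $F$ fixes \emph{two} points $0$ and $1$ of the circle and the relevant $K$-point has a large open stabilizer. In fact the germ of any $g\in F$ at the point $0_+\in K$ \emph{is trivial}: since $g$ is a homeomorphism of $[0,1)$ fixing $0$ with derivative a power of $2$, near $0$ it is $s\mapsto 2^k s$, which is \emph{not} the identity unless $k=0$; but elements of $F$ with $g'(0^+)=1$ form a proper subgroup. So $\G_{0_+}^0$ is the set of $g\in T$ that are the identity on a neighborhood of $0$; call it $F_0$. Then $\G_x/\G_x^0 = F/F_0$, and the plan is: (1) show $F/F_0$ is amenable — indeed $F/F_0$ embeds into the group of germs at $0^+$, which is isomorphic to $\Z$ (via $g\mapsto \log_2 g'(0^+)$) and hence abelian, so $F/F_0$ is abelian, in particular amenable; hence $\csx=C^*_{\la_{T/F}}(T)$ is simple by Theorem~\ref{thm:C*-simple-main}. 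For the trace assertion, (2) I would show $T\act K$ is topologically free: for nontrivial $g\in V$ (hence $g\in T$), the fixed-point set $K^g$ has empty interior since $g$ is piecewise a nontrivial affine map $2^k s + c$ with $k\neq 0$ on any interval where it is not the identity, and the slope-$1$ pieces, being translations $s\mapsto s+c$ with $c\neq 0$ when $g\neq e$... one must rule out $g$ being a nontrivial translation on a whole subinterval while fixing it pointwise, which is impossible. Then topological freeness plus faithfulness gives, by Corollary~\ref{cor:faithful-trace}, that $C^*_{\la_{T/F}}(T)$ admits a trace iff $K$ is topologically free, and if so the trace is the canonical one; but we must instead conclude \emph{no} traces, so actually the argument for ``no traces'' must be different: by Theorem~\ref{faithful-trace}, $C^*_{\la_{T/F}}(T)=\csx$ admits a trace iff $\G/N\act K$ is topologically free where $N=\ker(T\act K)=\{e\}$, iff $T\act K$ is topologically free, iff (by that theorem) $\la_{T}\prec\la_{T/F}$; since $F$ is non-amenable this weak containment fails, so there is no trace. \emph{The main obstacle} is thus organizing these implications correctly: establishing $\G_{0_+}^0=F_0$ with $F/F_0$ amenable (for simplicity) while simultaneously using that $F$ is \emph{non}-coamenable in $T$ — equivalently that $\la_{T/F}\not\succ\la_T$ — to kill traces, the latter following from non-amenability of $F$ via $\la_{T/F}\prec\la_T \Leftrightarrow F$ amenable, which is ruled out by the fact that $F$ is not amenable... but since amenability of $F$ is a famous open problem, the \emph{real} argument the paper must use is that $C^*_{\la_{T/F}}(T)\subsetneq C^*_{\la_{T/F'}}(V)\cong\cO_2$ carries the structure forcing no trace directly, or uses that $F$ does not satisfy the relevant co-amenability. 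I would flag precisely this as the delicate point and defer to the paper's own resolution.
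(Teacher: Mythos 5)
Your simplicity argument, after the self-correction, lands on essentially the paper's proof: one takes $x=0_+\in K$, notes $T_{0_+}=F$, computes $T_{0_+}^0=\{g\in F: g'(0)=1\}$ (the elements trivial near $0^+$), observes $F/T_{0_+}^0\cong\Z$ via $g\mapsto\log_2 g'(0)$, and invokes Theorem~\ref{thm:C*-simple-main}. That half is fine.

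The trace half has a genuine gap, and the missing observation is the opposite of what you tried to prove: the action $T\act K$ is faithful but \emph{not} topologically free. Indeed any nontrivial $g\in T$ supported in $[1/2,1)$ fixes the clopen set $K\cap[0_+,1/2_-]$ pointwise, so $\interior K^g\neq\emptyset$; equivalently, the very fact that $T_{0_+}^0$ is nontrivial (which you need for the simplicity half!) already witnesses the failure of topological freeness. Your step ``(2) I would show $T\act K$ is topologically free'' is therefore false, and it is exactly this \emph{failure} that kills the traces: by Theorem~\ref{faithful-trace} with $N=\ker(T\act K)=\{e\}$, the algebra $C^*_{\la_{T/F}}(T)$ admits a trace if and only if $T\act K$ is topologically free, so no trace exists. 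No input about amenability of $F$ is needed anywhere. Your subsequent detour conflates the two weak containments ($\la_T\prec\la_{T/F}$, which is condition (iv) of Theorem~\ref{faithful-trace} and fails here for dynamical reasons, versus $\la_{T/F}\prec\la_T$, which is the statement that $F$ is amenable), which is what led you into the open problem and to defer the conclusion. In short: you chose the right boundary and the right point, but misdiagnosed its topological freeness and consequently could not close the trace argument that the paper settles in one line.
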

\begin{proof}
 Consider $T\act K$ and observe that $T_{0_+}=F$. Clearly, $K$ is a faithful but not topologically free $T$-boundary. From Theorem \ref{faithful-trace}, we conclude that $C^*_{\la_{T/F}}(T)$ does not admit traces.

Notice that $T_{0_+}^0=\{g\in F:g'(0)=1\}$. Therefore, the map $F/T_{0_+}^0\to\Z$ which sends $gT_{0_+}^0\in F/T_{0_+}^0$ such that $g'(0)=2^a$ to $a$, is an isomorphism.

Since $\frac{T_{0_+}}{T^0_{0_+}}\simeq \Z$ is amenable, we conclude from Theorem \ref{thm:C*-simple-main} that $C^*_{\la_{T/F}}(T)$ is simple.
\end{proof}

Notice that the set $\cX:=[0,1)\cap\Z[1/2]$ is invariant under the action of $V$ on $[0,1)$. Denote by $\pi$ the associated unitary representation of $V$ on $\ell^2(\cX)$. Clearly, the action of $T$ on $\cX$ is transitive, and $F$ is the stabilizer of $0$. Therefore, $\pi|_T\sim_u\la_{T/F}$. Denote by $H$ the stabilizer of $0$ with the respect to the $V$-action. By the same reason, $\pi\sim_u\la_{V/H}$. The same arguments of Theorem \ref{applFT} can be used for showing that $C^*_\pi(V)$ is simple and admits no traces.

Recall that a unital $C^*$-algebra $A$ is said to be \emph{stably finite} if, for every $n\in \N$, $M_n(A)$ does not contain any infinite projection. If $A$ is simple, then $A$ is said to be \emph{purely infinite} if every non-zero hereditary $C^*$-subalgebra of $A$ contains an infinite projection.

In \cite[Proposition 4.3]{HaagOle}, Haagerup and Olesen showed that $C^*_\pi(F)\subsetneq C^*_\pi(T)\subsetneq C^*_\pi(V)=\mathcal{O}_2$, where $\cO_2$ is the Cuntz algebra. In particular, $C^*_\pi(V)$ is a \emph{Kirchberg algebra} (i.e., a separable, simple, nuclear and purely infinite $C^*$-algebra). 

The fact that $C^*_{\la_{T/F}}(T)\subset\cO_2$ implies that $C^*_{\la_{T/F}}(T)$ is an exact $C^*$-algebra. Furthermore, since $C^*_{\la_{T/F}}(T)$ admits no traces, a deep result of Haagerup (\cite[Corollary 5.12]{Haa14}) implies that $C^*_{\la_{T/F}}(T)$ is not stably finite.

\begin{question}
Is $C^*_{\la_{T/F}}(T)$ a Kirchberg algebra?
\end{question}

We conclude this section with the example which we promised in the last section to show that the assumption in Corollary~\ref{cor:xoo} is necessary.

First, we recall the following Proposition, which is known (\cite{HaagOle, Oza-note}), and was behind the observation by Haagerup and Olesen that $\la_T\nprec\la_{T/F}$ (see \cite{HaagOle}).

Given a bijection $g$ on a set $\Omega$, let $\supp g:=\{x\in \Omega:gx\neq x\}$. Notice that $g(\supp g)=\supp g$.

\begin{proposition}\label{prop:HaagOle}
 Let $\G$ be a group acting on a set $\Omega$ and $\la\colon\G\to B(\ell^2(\Omega))$ be the associated unitary representation. Given $g,h\in \G$, we have that $\supp g\cap\supp h=\emptyset$ iff $\la_{gh}=\la_g+\la_h-1$.
\end{proposition}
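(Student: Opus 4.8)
The plan is to reduce everything to a computation on the canonical orthonormal basis $\{\delta_x:x\in\Omega\}$ of $\ell^2(\Omega)$, on which $\la_k\delta_x=\delta_{kx}$ for $k\in\G$, recalling also that $\one=\la_e$.

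For the forward implication, assume $\supp g\cap\supp h=\emptyset$. I would first record the crucial consequence of disjointness: since $g$ permutes $\supp g$ and fixes $\Omega\setminus\supp g\supseteq\supp h$ pointwise (and symmetrically for $h$), both $\supp g$ and $\supp h$ are invariant under $g$ and under $h$, and the remainder $R:=\Omega\setminus(\supp g\cup\supp h)$ is fixed pointwise by both. Using the orthogonal decomposition $\ell^2(\Omega)=\ell^2(\supp g)\oplus\ell^2(\supp h)\oplus\ell^2(R)$, one reads off from the definitions that $\la_g=g|_{\supp g}\oplus\id\oplus\id$ and $\la_h=\id\oplus h|_{\supp h}\oplus\id$; and since $hx=x$ for $x\in\supp g$, while for $x\in\supp h$ one has $hx\in\supp h\subseteq\Omega\setminus\supp g$ and hence $g(hx)=hx$, one gets $ghx=gx$ on $\supp g$, $ghx=hx$ on $\supp h$, and $ghx=x$ on $R$, i.e.\ $\la_{gh}=g|_{\supp g}\oplus h|_{\supp h}\oplus\id$. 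Adding the first two expressions and subtracting $\one$ yields exactly $\la_{gh}$.

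For the converse, suppose $\la_{gh}=\la_g+\la_h-\one$ and, for contradiction, take $x\in\supp g\cap\supp h$. Evaluating the diagonal matrix coefficient at $\delta_x$ gives
\[
\one_{\{ghx=x\}}=\langle\la_{gh}\delta_x,\delta_x\rangle=\langle\la_g\delta_x,\delta_x\rangle+\langle\la_h\delta_x,\delta_x\rangle-1=\one_{\{gx=x\}}+\one_{\{hx=x\}}-1 .
\]
The left-hand side lies in $\{0,1\}$, whereas the right-hand side equals $0+0-1=-1$ because $gx\neq x$ and $hx\neq x$; this contradiction forces $\supp g\cap\supp h=\emptyset$.

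I do not expect a genuine obstacle: the argument is elementary, and the only place requiring a moment's care is the invariance of the supports $\supp g$ and $\supp h$ under the relevant group elements in the forward direction, which is precisely where the disjointness hypothesis is used.
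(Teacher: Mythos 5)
Your proof is correct and follows essentially the same route as the paper: the converse is verified by the same diagonal matrix coefficient computation at a point of $\supp g\cap\supp h$ (LHS in $\{0,1\}$ versus RHS equal to $-1$), and your block decomposition of $\ell^2(\Omega)$ in the forward direction is just a repackaging of the paper's case-by-case check on basis vectors $\delta_y$ for $y\in\supp g$, $y\in\supp h$, and $y$ outside both, resting on the same key observations that $h$ fixes $\supp g$ pointwise and that $hy\in\supp h$ for $y\in\supp h$. No gaps.
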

\begin{proof}

Suppose that there exists $x\in\supp g\cap \supp h$. Then $\langle\la_{gh}(\delta_x),\delta_x\rangle\geq0$, whereas $\langle(\la_g+\la_h-1)(\delta_x),\delta_x\rangle=-1.$ Hence, $\la_{gh}\neq \la_g+\la_h-1$.

Suppose that $\supp g\cap \supp h=\emptyset$. If $y\notin\supp g\cup\supp h$, then clearly $\delta_y=\la_{gh}(\delta_y)=(\la_g+\la_h-1)(\delta_y)$.

If $y\in\supp_g$, then $\delta_{gy}=\la_{gh}(\delta_y)=(\la_g+\la_h-1)(\delta_y)$. Finally, if $y\in\supp h$, then $hy\in\supp h$ as well, and $\delta_{hy}=\la_{gh}(\delta_y)=(\la_g+\la_h-1)(\delta_y)$.
\end{proof}

\begin{example}\label{counter-ex}
Identifying $S^1$ with $[0,1)$, we get a boundary action of $T$ and the open stabilizer of $0$ coincides with the commutator subgroup $[F,F]$ (\cite[Theorem 4.1]{CFP}). 

Let $\Omega:=\Z[1/2]\cap[0,1)$. Given $x\in \Omega$, let $D_+(g)\colon \Omega\to\Z$ be given by $D_+(g)(x)=n$ if $\lim_{y\to x^+}g'(y)=2^n$. Define $D_-$ analogously, so that $D_+$ and $D_-$ are the maps obtained by taking right and left derivatives, respectively, followed by $\log_2$. 
It holds that $D_\pm(gh)(x)=D_\pm(g)(h(x))+D_\pm(h)(x)$, for any $g,h\in T$ and $x\in \Omega$.

Consider the action of $T$ on $\Omega\times\Z\times\Z$ given by $$g(x,n,m)=(g(x),n+D_-(g)(x),m+D_+(g)(x)).$$
It is easy to see that this action is transitive and the stabilizer of $(0,0,0)$ coincides with $[F,F]$. Therefore, $T/[F,F]$ can be identified with $\Omega\times\Z\times\Z$.

Choose $g\in T$ such that $g(x)=x$ for $x\geq 1/2$, and $D_-(g)(1/2)\neq 0$. Also choose $h\in T $ such that $h(x)=x$ for $x\leq 1/2$ and $D_+(h)(1/2)\neq 0$. Then the supports of $g$ and $h$ with respect to the action of $T$ on $\Omega$ are disjoint, but the point $(1/2,0,0)$ is in the intersection of the supports of $g$ and $h$ with respect to the action on $\Omega\times\Z\times\Z$. Using Proposition~\ref{prop:HaagOle}, 
$$(1-\la_{T/[F,F]}(g))(1-\la_{T/[F,F]}(h))\neq 0=(1-\la_{T/F}(g))(1-\la_{T/F}(h)).$$
Therefore, $\la_{T/[F,F]}\nprec\la_{T/F}$. On the other hand, it follows from Proposition~\ref{prop:co-am} that $\la_{T/F}\prec\la_{T/[F,F]}$. 
This show that the $C^*$-algebra $C^*_{\la_{T/[F,F]}}(T)$ is not simple, and therefore the hypothesis in Corollary~\ref{cor:xoo} that $x\in \Xoo$ is necessary.
\end{example}

\section{Groupoids of germs}\label{section:groupoid}

In this section, we present a characterization of germinal representations of compact $\G$-spaces, as introduced in Definition \ref{grprep}.

We start by recalling some facts about étale groupoids and groupoids of germs. The reader can find more details in e.g. \cite[Chapter 3]{Put} for the Hausdorff case, and \cite{Exel} for the general case.

\subsection{Étale groupoids}

An \emph{étale groupoid} $ G $ is a topological groupoid whose unit space $ G ^{(0)}$ is locally compact and Hausdorff, and such that the range and source maps $r,s\colon G\to G^{(0)}$ are local homeomorphisms. Given $x\in G^{(0)}$, the \emph{isotropy group} of $G$ at $x$ is the group $r^{-1}(x)\cap s^{-1}(x)$. If $\interior\{g\in G:s(g)=r(g)\}=G^{(0)}$, then $G$ is said to be \emph{effective}.

An open subset $U\subset G $ is said to be a \emph{bisection} if the restrictions of $r$ and $s$ to $U$ are injective. Notice that every bisection is a locally compact Hausdorff space.

Given a bisection $U$, we view a function in $C_c(U)$, as a function on $ G $ by defining it to be $0$ outside of $U$.

Let $\cC( G )$ be the linear span within the space of all complex-valued functions on $ G $ of the union of the $C_c(U)$ for all open bisections $U$. If $ G $ is not Hausdorff, then the functions in $\cC( G )$ are not necessarily continuous. 

We will need the following result later on:

\begin{proposition}[{\cite[Proposition 3.10]{Exel}}]\label{exel}
Let $\cU$ be a collection of bisections of $G$ such that $G=\bigcup \cU$. Then $\cC(G)$ is linearly spanned by the collection of all subspaces of the form $C_c(U)$, where $U\in\cU$.
\end{proposition}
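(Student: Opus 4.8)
The plan is to show that the linear span of $\bigcup_{U\in\cU}C_c(U)$ already contains $C_c(V)$ for an \emph{arbitrary} open bisection $V$ of $G$; since by definition $\cC(G)$ is spanned by the $C_c(V)$ over all open bisections $V$, this suffices. So fix an open bisection $V$ and a function $\varphi\in C_c(V)$. The support $L:=\supp\varphi$ is a compact subset of $V$. First I would observe that the collection $\{V\cap U : U\in\cU\}$ is an open cover of $L$ (each point of $L$ lies in $G=\bigcup\cU$, hence in some $U$, and also in $V$), and that each $V\cap U$ is again an open bisection, being an open subset of the bisection $V$. By compactness, finitely many of them, say $V\cap U_1,\dots,V\cap U_n$, cover $L$.

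The key step is then a partition-of-unity argument carried out \emph{on the unit space}, exploiting that $r$ restricted to the bisection $V$ is a homeomorphism onto its (open) image. Concretely, I would push everything forward by $r|_V$: the sets $r(V\cap U_i)$ form an open cover of the compact set $r(L)\subset G^{(0)}$, and since $G^{(0)}$ is locally compact Hausdorff, there is a partition of unity $\{h_i\}_{i=1}^n$ subordinate to this cover, i.e. $h_i\in C_c(r(V\cap U_i))$, $0\le h_i\le 1$, and $\sum_i h_i \equiv 1$ on a neighborhood of $r(L)$. Pulling back along $r|_V$ gives functions $\tilde h_i := h_i\circ r$ on $V$ with $\tilde h_i\in C_c(V\cap U_i)$ (viewed, as the convention says, as functions on $G$ that vanish off $V\cap U_i$) and $\sum_i \tilde h_i = 1$ on a neighborhood of $L$ inside $V$. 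Setting $\varphi_i := \tilde h_i\cdot\varphi$ (pointwise product of functions on $G$), each $\varphi_i$ is continuous, supported in the bisection $V\cap U_i\subset U_i$, hence $\varphi_i\in C_c(U_i)$, and $\sum_{i=1}^n\varphi_i=\varphi$ because $\sum_i\tilde h_i\equiv 1$ on $\supp\varphi$. Thus $\varphi$ lies in the span of $\bigcup_{U\in\cU}C_c(U)$, as desired.

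The main point requiring care — the ``hard part'' in an otherwise routine argument — is the non-Hausdorff subtlety: the products $\varphi_i=\tilde h_i\varphi$ and the functions $\tilde h_i$ must be checked to be continuous \emph{on all of $G$}, not merely on $V$. This is fine because each is supported in the bisection $V\cap U_i$, which is a Hausdorff (indeed locally compact Hausdorff) open subspace of $G$, and a function supported in an open bisection that is continuous there and vanishes on the boundary extends by zero to an element of $C_c$ of that bisection in the sense used throughout this section; one does not claim continuity as a function on $G$ in the topological sense, only membership in the formal space $C_c(V\cap U_i)$. I would also note that the identity $\sum_i\varphi_i=\varphi$ is an identity of genuine functions on the set $G$, so no continuity issue arises in verifying it. Apart from this bookkeeping, the only ingredients used are compactness of $\supp\varphi$, the homeomorphism $r|_V$, and existence of partitions of unity on the locally compact Hausdorff space $G^{(0)}$.
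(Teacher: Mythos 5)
Your argument is correct. The paper does not actually prove this proposition --- it is quoted from Exel's work --- but your partition-of-unity argument on the unit space, transported along the homeomorphism $r|_V$, is essentially the standard (and Exel's own) proof, and you have correctly handled the one genuine subtlety: what is needed is membership of each $\varphi_i=\tilde h_i\varphi$ in $C_c(V\cap U_i)\subset C_c(U_i)$ in the formal sense used for non-Hausdorff $G$, not continuity on $G$ itself.
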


The vector space $\cC( G )$ has the structure of a $*$-algebra with product given by $(f_1 f_2)(g):=\sum_{g_1g_2=g}f_1(g_1)f_2(g_2)$ and involution by $f_1^*(g)=\overline{f_1(g^{-1})}$, for $f_1,f_2\in \cC( G )$ and $g\in G $.

Given $x\in G^{(0)}$, there is a $*$-homomorphism $L_x\colon\cC(G)\to B(\ell^2(s^{-1}(x)))$ given by 
\begin{equation}\label{reducedr}
L_x(a)\delta_h=\sum_{g\in s^{-1}(x)} a(gh^{-1})\delta_g,
\end{equation}
 for $a\in\cC(G)$ and $h\in s^{-1}(x)$. The \emph{reduced $C^*$-algebra} of $G$, denoted by $C^*_r(G)$, is the completion of $\cC(G)$ under the $C^*$-norm given by $\|a\|:=\sup_{x\in G^{(0)}}\|L_x(a)\|$.

The \emph{full $C^*$-algebra} of $ G $, denoted by $C^*( G )$, is the completion of $\cC( G )$ under the $C^*$-norm given by $|||a|||:=\sup\{\|\sigma(a)\|:\text{$\|\cdot\|$ is a $C^*$-seminorm on $\cC(G)$}\},$ for $a\in \cC( G )$.

From now on, we assume that $ G ^{(0)}$ is compact. In this case, $\cC( G )$ is unital and the inclusion $C(G^{(0)})\to\cC(G)$ is a unital $*$-homomorphism. A \emph{representation} of $\cC( G )$ on a Hilbert space $\cH$ is a unital $*$-homomorphism $\sigma\colon \cC( G )\to B(\cH)$. As in the case of groups, given representations $\sigma_1,\sigma_2$ of $G$, we say that $\sigma_1$ weakly contains $\sigma_2$, and denote this by $\sigma_2\prec\sigma_1$, if $\|\sigma_2(f)\|\leq\|\sigma_1(f)\|$ for every $f\in \cC(G)$.

 If $G$ is Hausdorff, then the restriction map $E\colon\cC(G)\to C(G^{(0)})$ extends to a conditional expectation $E\colon C^*_r(G)\to C(G^{(0)})$.

\subsection{Groupoids of germs}

Let $X$ be a compact $\G$-space. Given $(g,x)$ and $(h,y)$ in $\G\times X$ we will say that $(g,x)\sim(h,y)$ if $x=y$ and there is a neighborhood $U$ of $x$ such that $g|_U=h|_U$. The equivalence class of $(g,x)$ will be denoted by $[g,x]$. We say that $ G(\G,X): =\frac{\G\times X}{\sim}$ is the \emph{groupoid of germs} of the action.

The product of two elements $[h,y]$ and $[g,x]$ is defined if and only if $y=gx$, in which case $[h,y][g,x]=[hg,x]$. Inversion is given by $[g,x]^{-1}:=[g^{-1},gx]$. We identify $G^{(0)}$ with $X$.

As observed in \cite{JNS}, given $x\in X$, the isotropy group of $G(\G,X)$ at $x$ is naturally identified with $\G_x/\G_x^0$.

Given $U\subset X$ open and $g\in \G$, let $\Theta( g,U):=\{[g,x]:x\in U\}$. The topology on $ G(\G,X)$ is the one generated by the basis $\{\Theta( g,U):g\in \G, \text{$U\subset X$ open}\}$. With this topology, $ G(\G,X) $ is an effective étale groupoid and each set of the form $\Theta( g,U)$ is a bisection.

As observed in e.g. \cite[Lemma 4.3]{NyO}, $ G(\G,X)$ is Hausdorff if and only if $\interior X^g$ is closed in $X$ for each $g\in\G$.

\subsection{Germinal representations}

In the following, given a compact $\G$-space $X$, we show that there is a bijection between representations of $\cC(G(\G,X))$ and germinal representations of $(\G,X)$. We follow the ideas of Exel (\cite{Exel}), who presented a similar characterization in the more general case of inverse semigroup actions. 

Fix $X$ a compact $\G$-space. Given $g\in\G$, we denote the compact bisection $\Theta(g,X)$ by $\Theta_g$, and let $\delta_g:=\one_{\Theta_g}\in \cC(G(\G,X))$. 

\begin{proposition}\label{many}
Given $g,h\in\G$, we have that
\begin{enumerate}
\item[(i)] $\delta_g\delta_h=\delta_{gh}$;
\item[(ii)] $(\delta_g)^*=\delta_{g^{-1}}$;
\item[(iii)]  $\delta_g f=(gf)\delta_g,$ for every $f\in C(X)$;

\item[(iv)] The map $f\in C(X)\to\delta_gf\in C(\Theta_g)$ is a linear isomorphism;
\item[(v)] $\Theta_g\cap\Theta_h=\Theta(g,\interior X^{g^{-1}h})$; 
\item[(vi)] $\delta_g f = f,$ for every $f\in C_c(\interior X^g)$.

\end{enumerate}
\end{proposition}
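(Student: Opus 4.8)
The statement to prove is Proposition \ref{many}, listing six basic identities about the elements $\delta_g = \one_{\Theta_g}$ in $\cC(G(\G,X))$. This is a routine verification, so my plan is to unwind the definitions of the product, involution, and topology in $\cC(G(\G,X))$ and check each item directly.

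\textbf{Plan.} First I would recall that $\Theta_g = \Theta(g,X) = \{[g,x] : x\in X\}$ is a compact bisection with $r([g,x]) = gx$ and $s([g,x]) = x$, so $s$ restricts to a homeomorphism $\Theta_g \to X$; under this identification $\delta_g = \one_{\Theta_g}$ is the characteristic function of all of $\Theta_g$. For (i), I would compute $(\delta_g\delta_h)([k,x]) = \sum_{[k_1,x_1][k_2,x_2] = [k,x]} \delta_g([k_1,x_1])\delta_h([k_2,x_2])$; the only possible nonzero term has $[k_2,x_2] = [h,x]$ and $[k_1,hx] = [g,hx]$, giving $[k,x] = [gh,x]$, so the sum is $1$ exactly when $[k,x]\in\Theta_{gh}$ and $0$ otherwise. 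For (ii), $\delta_g^*([k,x]) = \overline{\delta_g([k,x]^{-1})} = \overline{\delta_g([k^{-1},kx])}$, which is $1$ iff $[k^{-1},kx]\in\Theta_g$, i.e. iff $k^{-1} \sim g$ at $kx$, i.e. iff $[k,x] = [g^{-1}, x]$ after relabeling — this gives $\delta_{g^{-1}}$. Item (iii) is the analogous computation with $f\in C(X)\subset \cC(G(\G,X))$ viewed as supported on the unit space $G^{(0)} = X$: $(\delta_g f)([g,x]) = f(x)$ while $((gf)\delta_g)([g,x]) = (gf)(gx) = f(x)$, using that $f$ acts on the source side and $gf$ on the range side.

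\textbf{Remaining items.} For (iv), the map $f\mapsto \delta_g f$ sends $f$ to the function on $\Theta_g$ given by $[g,x]\mapsto f(x)$; since $s\colon\Theta_g\to X$ is a homeomorphism, this is precisely precomposition with $s^{-1}$, hence a linear isomorphism $C(X)\to C(\Theta_g)$. For (v), $\Theta_g\cap\Theta_h$ consists of those $[k,x]$ equal to both $[g,x]$ and $[h,x]$, i.e. $g$ and $h$ agree on a neighborhood of $x$, which by definition of the open set $\interior X^{g^{-1}h}$ means $x\in\interior X^{g^{-1}h}$ (the germ of $g^{-1}h$ at $x$ is trivial); so the intersection is $\{[g,x] : x\in\interior X^{g^{-1}h}\} = \Theta(g,\interior X^{g^{-1}h})$. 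Finally, (vi): for $f\in C_c(\interior X^g)$, I would check $\delta_g f$ and $f$ agree as functions on $G(\G,X)$; both vanish off $G^{(0)}$ except that $\delta_g f$ is supported on $\Theta_g$, and on a germ $[k,x]$ we get $(\delta_g f)([k,x]) = f(x)$ if $[k,x] = [g,x]$ — but $f(x)\neq 0$ forces $x\in\interior X^g$, where $[g,x] = [e,x]$ is a unit, so indeed $\delta_g f$ is supported on $G^{(0)}$ and equals $f$ there; conversely if $x\notin\interior X^g$ then $f(x) = 0$ and both sides vanish. This uses that $g$ fixes a neighborhood of every point in $\supp f$, so the germ of $g$ there is the germ of the identity.

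\textbf{Main obstacle.} There is no real obstacle here — everything reduces to carefully tracking which germs contribute to each convolution sum. The one subtlety worth stating cleanly is the non-Hausdorff phenomenon: in (vi) the identity $\delta_g f = f$ holds even though $\delta_g$ is not supported on $G^{(0)}$ globally, because multiplication by $f\in C_c(\interior X^g)$ cuts $\Theta_g$ down to a piece lying inside $G^{(0)}$; I would make sure the argument does not secretly assume continuity of elements of $\cC(G(\G,X))$, invoking Proposition \ref{exel} if needed to justify that these products land in $\cC(G(\G,X))$. Items (i)--(iii) together say $g\mapsto\delta_g$ is a unital homomorphism from $\G$ realizing the covariance relation, so they set up the correspondence with germinal representations used in the next results.
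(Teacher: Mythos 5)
Your proof is correct and follows essentially the same route as the paper: items (v) and (vi) rest on exactly the observations the paper uses ($[g,x]\in\Theta_h$ iff $x\in\interior X^{g^{-1}h}$, and $x\in\interior X^g$ iff $[g,x]=[e,x]$), while for (i)--(iv) the paper simply calls them straightforward computations and cites Exel's Propositions 7.3 and 7.5, which are the convolution/involution verifications you carry out explicitly.
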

\begin{proof}
The proofs of (i)-(iv) are straightforward computations and are a special case of \cite[Propositions 7.3 and 7.5]{Exel}, so we omit them. 

(v) This follows from the fact that, given $x\in X$, we have that $[g,x]\in\Theta_h$ if and only if $x\in\interior X^{g^{-1}h}$.

(vi) Given $x\in X$, we have that $x\in\interior X^g$ if and only if $[g,x]=[e,x]$. Hence, if $x\in\interior X^g$, then $[g,x]=[e,x]$ and $(\delta_gf)([g,x])=f([e,x])$. If $x\notin\interior X^g$, then $(\delta_gf)([g,x])=f([e,x])=0$, since $f\in C_c(\interior X^g)$. This concludes the proof of (vi).
\end{proof}

\begin{lemma}\label{technical}
Let $X$ be a compact $\G$-space and $(\pi,\rho)$ a germinal representation of $(\G,X)$. Let $J$ be a finite subset of $\G$ and suppose that for each $g\in J$ we are given $f_g\in C(X)$ such that $\sum_{g\in J}\delta_gf_g=0$. Then $\sum_{g\in J}\pi(g)\rho(f_g)=0$.
\end{lemma}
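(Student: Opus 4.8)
The plan is to reduce the identity $\sum_{g\in J}\pi(g)\rho(f_g)=0$ to a local statement at each point $x\in X$, using the germinal condition~\eqref{eq:cond} to handle the ambiguity coming from germs. First I would observe that the hypothesis $\sum_{g\in J}\delta_g f_g=0$ in $\cC(G(\G,X))$ is a statement about functions on the groupoid of germs: evaluated at a germ $[g,x]$, it says that $\sum_{h\in J:\,[h,x]=[g,x]} f_h(x)=0$. In other words, for every $x\in X$, partitioning $J$ according to the equivalence relation "$h\sim_x h'$ iff $h$ and $h'$ agree on a neighborhood of $x$" (equivalently $h^{-1}h'\in\G_x^0$, equivalently $x\in\interior X^{h^{-1}h'}$), the sum of the $f_h(x)$ over each equivalence class vanishes.

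Next I would fix $g_0\in J$ and a point $x$, and consider the equivalence class $J_{x,g_0}=\{h\in J: x\in\interior X^{g_0^{-1}h}\}$. The key point is that for $h\in J_{x,g_0}$ one can choose, for any prescribed local data, functions supported near $x$ inside $\interior X^{g_0^{-1}h}$; by the remark following Definition~\ref{grprep} (density of $\{f:\supp f\subset\interior X^g\}$ in $\{f:\supp f\subset X^g\}$) together with~\eqref{eq:cond}, we get $\pi(g_0^{-1}h)\rho(f)=\rho(f)$, i.e. $\pi(h)\rho(f)=\pi(g_0)\rho(f)$, whenever $\supp f\subset\interior X^{g_0^{-1}h}$. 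So, after multiplying the desired identity on the right by a suitable $\rho$ of a function supported in a small set, the operators $\pi(h)\rho(\cdot)$ for $h$ in a single $\sim_x$-class all collapse to $\pi(g_0)\rho(\cdot)$.

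The cleanest way to organize this is a partition-of-unity argument on $X$. Cover $X$ by finitely many open sets $U_1,\dots,U_n$ such that on each $U_i$ the partition of $J$ into $\sim_x$-classes is constant in $x$ (this is possible because there are only finitely many subsets of $J$, and for each fixed $h,h'\in J$ the set $\{x: x\in\interior X^{h^{-1}h'}\}$ is open, so refining along the atoms of the finite Boolean algebra generated by these open sets gives such a cover — one has to be slightly careful that within $U_i$ the relation is genuinely an equivalence relation, which it is since $\G_x^0$ is a subgroup, so "being in a common $\interior X^{\cdot}$" composes correctly). Take a partition of unity $\phi_1,\dots,\phi_n$ subordinate to this cover and, shrinking, arrange $\supp(\phi_i)\subset U_i$. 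Then $\sum_{g\in J}\pi(g)\rho(f_g)=\sum_i\sum_{g\in J}\pi(g)\rho(f_g\phi_i^2)$ after inserting $\sum\phi_i^2$; for a fixed $i$, group $J$ into its $\sim$-classes $C_1,\dots,C_k$ valid on $U_i$, pick a representative $g_j\in C_j$, and for $h\in C_j$ write $f_h\phi_i = f_h\phi_i\cdot\phi_i$ where $\supp(f_h\phi_i)\subset U_i\subset\interior X^{g_j^{-1}h}$, so $\pi(h)\rho(f_h\phi_i)=\pi(g_j)\rho(f_h\phi_i)$ (using again that products of $\rho$'s of functions with supports in $U_i$ behave well, and that $f_h\phi_i$ has support in $\interior X^{g_j^{-1}h}$). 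Hence $\sum_{h\in C_j}\pi(h)\rho(f_h\phi_i^2)=\pi(g_j)\rho\bigl((\sum_{h\in C_j}f_h\phi_i)\phi_i\bigr)$, and the function $\sum_{h\in C_j}f_h\phi_i$ vanishes on $U_i\supset\supp\phi_i$ by the hypothesis (this is exactly the groupoid-level identity read off pointwise on $U_i$), so this term is $0$. Summing over $j$ and then over $i$ gives $\sum_{g\in J}\pi(g)\rho(f_g)=0$.

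The main obstacle, and the step I would take most care with, is the combinatorial bookkeeping that the equivalence relation "$h\sim_x h'$" is constant on each $U_i$ and is genuinely transitive there; the transitivity uses that $\G_x^0$ is a subgroup (Proposition~\ref{prop:Haus-germ} and the surrounding discussion), so $x\in\interior X^{a}\cap\interior X^{b}$ forces $x\in\interior X^{ab}$ — one should phrase the cover in terms of the atoms of the finite Boolean algebra generated by the finitely many sets $\interior X^{h^{-1}h'}$ ($h,h'\in J$) together with their complements, on each atom of which every relevant membership question has a constant yes/no answer. Everything else is the routine multiplicativity of $\rho$ and the defining property~\eqref{eq:cond}; in particular I would not need any groupoid machinery beyond the elementary description of germs, consistent with the remark in the introduction that the proofs do not actually require groupoid theory.
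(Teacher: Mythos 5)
There is a genuine gap in the step where you cover $X$ by finitely many \emph{open} sets $U_1,\dots,U_n$ on each of which the partition of $J$ into germ-equivalence classes is constant. For a fixed pair $h,h'\in J$ the relation $h\sim_x h'$ holds iff $x\in\interior X^{h^{-1}h'}$, which is an open but in general not closed condition; at a point $x\in\partial(\interior X^{h^{-1}h'})$ every neighbourhood contains points where $h\sim h'$ together with the point $x$ itself where $h\not\sim_x h'$, so no open set containing $x$ has a constant partition. The atoms of the Boolean algebra generated by the sets $\interior X^{h^{-1}h'}$ are only locally closed, they do not form an open cover, and one cannot subordinate a partition of unity to a non-open cover. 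Your argument therefore only goes through when every $\interior X^g$ is clopen, i.e.\ when the action has Hausdorff germs (Proposition~\ref{prop:Haus-germ}(iv)); but the lemma is needed precisely without that hypothesis (the obstruction points are exactly the ``purely non-Hausdorff singularities''), and the Thompson group example $T\act S^1$ in Sections~\ref{sec:thomp} and~\ref{section:groupoid} shows this case genuinely occurs.

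The paper circumvents this by dualizing: for fixed $\xi,\eta\in\cH_\pi$ it realizes each functional $f\mapsto\langle\pi(g)\rho(f)\xi,\eta\rangle$ as a regular measure $\mu_g$ on the bisection $\Theta_g\cong X$, uses \eqref{eq:cond} to check that $\mu_g$ and $\mu_h$ agree on the overlap $\Theta_g\cap\Theta_h=\Theta(g,\interior X^{g^{-1}h})$ by testing against $C_c(\interior X^{g^{-1}h})$, glues the $\mu_g$ to a single Borel measure $\mu$ on $\bigcup_{g\in J}\Theta_g$, and then integrates the hypothesis: $\langle\sum_{g}\pi(g)\rho(f_g)\xi,\eta\rangle=\int\sum_{g}\delta_gf_g\,d\mu=0$. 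Restricting a measure to an open set whose complement is not open is unproblematic, which is exactly what the partition-of-unity route cannot imitate. Your pointwise reading of the hypothesis $\sum_{g}\delta_gf_g=0$ and your use of \eqref{eq:cond} on overlaps are correct and are the same local inputs the paper uses; it is only the globalization that must be carried out measure-theoretically rather than with a subordinate partition of unity.
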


\begin{proof}
Fix $\xi,\eta\in H_\pi$. For each $g\in J$, let $\mu_g$ be the finite regular measure on $\Theta_g$ such that, for each $f\in C(X)$, $\int\delta_gfd\mu_g=\langle\pi(g)\rho(f)\xi,\eta\rangle$. 

We claim that, given $g,h\in J$, $\mu_g$ coincides with $\mu_h$ on $\Theta_g\cap\Theta_h=\Theta(g,\interior X^{g^{-1}h})$. Indeed, given $f\in C_c(\interior X^{g^{-1}h})$, we have that $\pi(g^{-1}h)\rho(f)=\rho(f)$. Hence,
$$\int \delta_g fd\mu_g=\langle\pi(g)\rho(f)\xi,\eta\rangle=\langle\pi(h)\rho(f)\xi,\eta\rangle=\int\delta_hfd\mu_h.$$
This concludes the proof that $\mu_g$ and $\mu_h$ coincide on $\Theta_g\cap\Theta_h$.

Let $M$ be the open subset of $G(\G,X)$ given by $M:=\bigcup_{g\in J}\Theta_g$. Clearly, there is a finite Borel measure $\mu$ on $M$ such that $\mu(A)=\mu_g(A)$ for any $g\in J$ and $A\subset \Theta_g$ measurable. Then,
$$\left\langle\sum_{g\in J}\pi(g)\rho(f)\xi,\eta\right\rangle=\sum_{g\in J}\int\delta_gf_gd\mu_g=\int\sum_{g\in J}\delta_gf_g d\mu=0.$$
Since $\xi$ and $ \eta$ were arbitrary, we conclude that $\sum_{g\in J}\pi(g)\rho(f_g)=0$.
\end{proof}

Propositions \ref{exel}, \ref{many} and Lemma \ref{technical} imply that, given a compact $\G$-space $X$ and $(\pi,\rho)$ a germinal representation of $(\G,X)$, there exists a unique representation $\pi\times\rho$ of $\cC(G(\G,X))$ on $\cH_\pi$ given by $(\pi\times\rho)(\delta_gf)=\pi(g)f$, for every $g\in \G$ and $f\in C(X)$. Conversely, it follows easily from Proposition \ref{many} that any representation of $\cC(G(\G,X))$ is of this form. We summarize this discussion in the following:

\begin{theorem}\label{corr}
Let $X$ be a compact $\G$-space. There is a bijection between the class of germinal representations of $(\G,X)$ and the class of representations of $\cC(G(\G,X))$ given by the correspondence $(\pi,\rho)\to\pi\times\rho$. 
\end{theorem}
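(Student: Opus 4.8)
The plan is to verify in turn that the assignment $(\pi,\rho)\mapsto\pi\times\rho$ is well defined, injective, and surjective, using only the algebraic identities collected in Proposition~\ref{many}, the spanning statement of Proposition~\ref{exel}, and the consistency Lemma~\ref{technical}. For well-definedness: since $X$ is compact, each $\Theta_g=\Theta(g,X)$ is a compact bisection and $G(\G,X)=\bigcup_{g\in\G}\Theta_g$, so Proposition~\ref{exel} gives that $\cC(G(\G,X))$ is linearly spanned by $\bigcup_{g\in\G}C(\Theta_g)$; by Proposition~\ref{many}(iv) we have $C(\Theta_g)=\delta_g\,C(X)$, so every $a\in\cC(G(\G,X))$ can be written as $a=\sum_{g\in J}\delta_g f_g$ with $J\subset\G$ finite and $f_g\in C(X)$. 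Given a germinal representation $(\pi,\rho)$, I would set $(\pi\times\rho)(a):=\sum_{g\in J}\pi(g)\rho(f_g)$; Lemma~\ref{technical} is precisely the assertion that the right-hand side depends only on $a$ and not on the chosen expression, so $\pi\times\rho$ is a well-defined linear map on $\cC(G(\G,X))$.

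Next I would check that $\pi\times\rho$ is a unital $*$-homomorphism. Since $\rho$ is a unital $\G$-equivariant $*$-homomorphism and $\pi$ a unitary representation, this is a direct computation from Proposition~\ref{many}(i)--(iii): the product $(\delta_g f)(\delta_h k)=\delta_{gh}\,(h^{-1}f)\,k$ is sent to $\pi(gh)\rho\bigl((h^{-1}f)k\bigr)=\pi(g)\pi(h)\rho(h^{-1}f)\rho(k)=\pi(g)\rho(f)\pi(h)\rho(k)$ (using equivariance of $\rho$ in the last step), and $(\delta_g f)^*=\overline f\,\delta_{g^{-1}}=\delta_{g^{-1}}(g\overline f)$ is sent to $\pi(g^{-1})\rho(g\overline f)=(\pi(g)\rho(f))^*$; unitality is $\delta_e\one_X\mapsto\pi(e)\rho(\one_X)=1$. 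Injectivity is then immediate: restricting $\pi\times\rho$ to $C(X)=\delta_e\,C(X)\subset\cC(G(\G,X))$ recovers $\rho$, and $(\pi\times\rho)(\delta_g)=\pi(g)$ recovers $\pi$, so $(\pi,\rho)$ is determined by $\pi\times\rho$.

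For surjectivity I would start from an arbitrary representation $\sigma\colon\cC(G(\G,X))\to B(\cH)$ and set $\rho:=\sigma|_{C(X)}$ and $\pi(g):=\sigma(\delta_g)$. Proposition~\ref{many}(i)--(ii) give $\pi(gh)=\pi(g)\pi(h)$ and $\pi(g)^*=\pi(g^{-1})$, so $\pi\in\Rep(\G)$; Proposition~\ref{many}(iii) gives $\pi(g)\rho(f)\pi(g)^{-1}=\sigma\bigl((gf)\delta_g\delta_{g^{-1}}\bigr)=\sigma(gf)=\rho(gf)$, so $(\pi,\rho)$ is a nondegenerate covariant representation of $(\G,X)$. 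If moreover $f\in C(X)$ has $\supp f\subset\interior X^g$, then $\delta_g f=f$ in $\cC(G(\G,X))$ by Proposition~\ref{many}(vi), whence $\pi(g)\rho(f)=\sigma(\delta_g f)=\sigma(f)=\rho(f)$; by the remark following Definition~\ref{grprep} this suffices for $(\pi,\rho)$ to be germinal. Finally $\sigma$ and $\pi\times\rho$ agree on the spanning set $\{\delta_g f:g\in\G,\ f\in C(X)\}$, so $\sigma=\pi\times\rho$, which completes the bijection.

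I do not expect any single step to be genuinely hard: the one non-formal ingredient is Lemma~\ref{technical} itself (already proved, by gluing the measures $\mu_g$ along the overlaps $\Theta_g\cap\Theta_h=\Theta(g,\interior X^{g^{-1}h})$ into a single Borel measure on $\bigcup_{g\in J}\Theta_g$), and the rest of the argument is the bookkeeping needed to match the relations $\delta_g\delta_h=\delta_{gh}$, $(\delta_g)^*=\delta_{g^{-1}}$, $\delta_g f=(gf)\delta_g$ and $\delta_g f=f$ on $\interior X^g$ with the defining properties of germinal representations. The only subtlety worth flagging is the use of compactness of $X$, which makes $\Theta_g=\Theta(g,X)$ a compact bisection and hence $\delta_g=\one_{\Theta_g}\in\cC(G(\G,X))$; this is what allows $\pi$ to be recovered from $\pi\times\rho$ and, conversely, to be defined from $\sigma$.
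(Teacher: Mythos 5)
Your proposal is correct and follows essentially the same route as the paper: the paper's argument (given in the discussion immediately preceding the theorem) is exactly the combination of Proposition~\ref{exel} with Proposition~\ref{many}(iv) to write every element of $\cC(G(\G,X))$ as $\sum_{g\in J}\delta_g f_g$, Lemma~\ref{technical} for well-definedness of $\pi\times\rho$, and the algebraic identities of Proposition~\ref{many} for the homomorphism property and the converse construction $\rho:=\sigma|_{C(X)}$, $\pi(g):=\sigma(\delta_g)$. You have merely spelled out the computations that the paper leaves as ``it follows easily,'' and they are all accurate.
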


\begin{remark}\label{remark}
Let $X$ be a compact $\G$-space. 
\begin{enumerate}
\item[(i)]
Let $I$ be the ideal of $C(X)\rtimes\G$ generated by the relations in \eqref{eq:cond}. Theorem \ref{corr} implies that $C^*(G(\G,X))\simeq\frac{C(X)\rtimes\G}{I}$.

\item[(ii)]

Given $x\in X$, notice that $s^{-1}(x)=\{[g,x]:g\in\G\}$ can be identified with $\G/\G_x^0$. Under this identification, the representation $L_x$ from \eqref{reducedr} is equal to $\la_{\G/\G_x^0}\times \cP_x$.

\end{enumerate}
\end{remark}

\begin{theorem}\label{exotic}
Let $X$ be a $\G$-boundary with Hausdorff germs and $(\pi,\rho)$ a germinal representation of $(\G,X)$. Then there exists a commutative diagram of canonical surjective $*$-homomorphisms:

\centerline{\xymatrix{
C^*(G(\G,X))\ar[rr]\ar[dr]&&
C^*_r(G(\G,X))\\
&C^*_{\pi\times\rho}(\G,X)\ar[ur]}}
\end{theorem}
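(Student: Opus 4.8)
The plan is to produce the diagram by exhibiting the two claimed surjections $C^*(G(\G,X))\twoheadrightarrow C^*_{\pi\times\rho}(\G,X)$ and $C^*_{\pi\times\rho}(\G,X)\twoheadrightarrow C^*_r(G(\G,X))$, and then observing that their composite is the canonical map $C^*(G(\G,X))\twoheadrightarrow C^*_r(G(\G,X))$, which exists for any étale groupoid. For the first map: by Theorem~\ref{corr} the germinal representation $(\pi,\rho)$ corresponds to a representation $\pi\times\rho$ of $\cC(G(\G,X))$, whose image generates $C^*_{\pi\times\rho}(\G,X)$ (since $(\pi\times\rho)(\delta_gf)=\rho(f)\pi(g)$ up to the commutation relation in Proposition~\ref{many}(iii), and these span a dense subalgebra). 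As $C^*(G(\G,X))$ is the completion of $\cC(G(\G,X))$ under the maximal $C^*$-seminorm, the representation $\pi\times\rho$ factors through it, giving the surjective $*$-homomorphism $C^*(G(\G,X))\to C^*_{\pi\times\rho}(\G,X)$; commutativity of the upper-left triangle is then automatic.

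The substance is the second arrow, $C^*_{\pi\times\rho}(\G,X)\twoheadrightarrow C^*_r(G(\G,X))$. The point is that the reduced norm is \emph{dominated} by the norm on $C^*_{\pi\times\rho}(\G,X)$, i.e. $\|L_x(a)\|\le\|(\pi\times\rho)(a)\|$ for all $a\in\cC(G(\G,X))$ and all $x\in G^{(0)}=X$. By Remark~\ref{remark}(ii), $L_x=\la_{\G/\G_x^0}\times\cP_x$, so this amounts to showing $(\la_{\G/\G_x^0},\cP_x)\prec(\pi,\rho)$ for every $x\in X$. When $x\in\Xoo$ this is exactly Theorem~\ref{thm:grp}. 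To upgrade this to all of $X$, I would use that $X$ has Hausdorff germs: by Proposition~\ref{prop:Haus-germ}(iv) this means $\Xoo=X$, so the hypothesis of Theorem~\ref{thm:grp} is met at \emph{every} point $x\in X$, and we get $(\la_{\G/\G_x^0},\cP_x)\prec(\pi,\rho)$ for all $x$. Taking the supremum over $x\in X$ of $\|L_x(a)\|$ gives $\|a\|_{C^*_r}\le\|(\pi\times\rho)(a)\|$, so the identity on $\cC(G(\G,X))$ extends to a surjective $*$-homomorphism $C^*_{\pi\times\rho}(\G,X)\to C^*_r(G(\G,X))$ sending $\rho(f)\pi(g)$ to the image of $\delta_gf$.

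Finally, both composites $C^*(G(\G,X))\to C^*_{\pi\times\rho}(\G,X)\to C^*_r(G(\G,X))$ and the canonical map $C^*(G(\G,X))\to C^*_r(G(\G,X))$ agree on the dense subalgebra $\cC(G(\G,X))$ (both are the identity there), hence agree, giving commutativity of the whole diagram; all three maps are surjective since their images contain the dense subalgebra $\cC(G(\G,X))$. The main obstacle I anticipate is purely bookkeeping: matching the element $\delta_g f$ of $\cC(G(\G,X))$ with $\rho(f)\pi(g)$ versus $\pi(g)\rho(f)$ (these differ by the automorphism, cf.\ Proposition~\ref{many}(iii)), and making sure the weak-containment inequality from Theorem~\ref{thm:grp} is stated in the form $\|(\la_{\G/\G_x^0}\times\cP_x)(a)\|\le\|(\pi\times\rho)(a)\|$ on all of $\cC(G(\G,X))$ rather than just on the generators --- but this is immediate from the definition of weak containment of covariant representations together with density of the span of the $\delta_g f$.
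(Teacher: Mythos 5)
Your proposal is correct and follows essentially the same route as the paper: the key step in both is to invoke Theorem~\ref{thm:grp} together with Remark~\ref{remark}(ii) to get $\bigoplus_{x\in X}L_x\prec\pi\times\rho$, with the Hausdorff-germs hypothesis entering (via Proposition~\ref{prop:Haus-germ}(iv), $\Xoo=X$) precisely to make Theorem~\ref{thm:grp} applicable at every point. You spell out the universal-norm factorization and the commutativity check, which the paper leaves implicit, but the substance is identical.
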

\begin{proof}

It follows from Theorem \ref{thm:grp} and Remark \ref{remark}.(ii) that $$\bigoplus_{x\in X}L_x\prec\pi\times\rho.$$ 
Since $C^*_r(G(\G,X))$ is the completion of $\cC(G(\G,X))$ under the representation $$\bigoplus_{x\in X}L_x,$$ the result follows.
\end{proof}
\begin{remark}
Let $X$ be a $\G$-boundary with Hausdorff germs.
\begin{enumerate}
\item[(i)]
In \cite{KS}, Kyed and So\l tan introduced the term \emph{exotic} for a completion of the polynomial algebra on a discrete quantum group which sits in between the maximal and minimal completions (later, this notion was investigated in other contexts by several people). Hence, another way to phrase Theorem \ref{exotic} is to say that any germinal representation of $(\G,X)$ is ``exotic''.
\item[(ii)]
It follows from Proposition \ref{many} that the canonical conditional expectation $E\colon C^*_r(G(\G,X))\to C(X)$ is a $\G$-map. Hence, the unique $\G$-map $\psi\colon C^*_\pi(\G)\to C(X)$ from Proposition \ref{Haus} is the composition of the canonical map $C^*_\pi(\G)\to C^*_r(G(\G,X))$ with $E$.
\end{enumerate}
\end{remark}

\begin{example}
Consider the boundary action of Thompson's group $T$ on $S^1$ given by identifying the extremes of $[0,1]$. Since this action is faithful, but not topologically free, it follows from connectedness of $S^1$ that $G(T,S^1)$ is not Hausdorff. 

It follows from Example \ref{counter-ex} and Proposition \ref{tc} that $C^*_{\la_{T/T_1^0}\times\cP_1}(T,S^1)$ is not simple. Considering the identification from Remark \ref{remark}.(ii), we conclude that $C^*_r(G(T,S^1))$ is not simple, even though $G(T,S^1)$ is minimal and effective.

On the other hand, it follows from \cite[Remark 7.27]{KwaMey} that the \emph{essential $C^*$-algebra} of $G(T,S^1))$ is purely infinite and simple.
\end{example}

\begin{remark}

Given a Hausdorff étale groupoid $G$,  Kennedy, Kim, Li, Raum and Ursu have completely characterized simplicity of $C^*_r(G)$ in \cite{KKLRU} (more generally, in the not necessarily Hausdorff case their characterization applies to the essential $C^*$-algebra of $G$). 

 For non-Hausdorff groupoids, the question of simplicity of $C^*_r(G)$ was investigated in \cite{CEPSS}, but an intrinsic criterion ensuring simplicity is still missing. 
\end{remark}

\end{document}